\theoremstyle{plain}
\newtheorem{theorem}{Theorem}[section]
\newtheorem{lemma}[theorem]{Lemma}
\newtheorem{corollary}[theorem]{Corollary}
\newtheorem{proposition}[theorem]{Proposition}
\newtheorem{assumption}[theorem]{Assumption}
\theoremstyle{remark}
\newtheorem{remark}[theorem]{Remark}
\numberwithin{equation}{section}
\newcommand{\abs}[1]{\left\lvert #1\right\rvert}
\newcommand{\norm}[1]{\left\lVert #1\right\rVert}
\newcommand{\Lebn}[2]{\left\lVert #1 \right\rVert_{L^{#2}}}
\newcommand{\Jbr}[1]{\left\langle #1 \right\rangle}
\begin{document}
\title%[Hartree equation with a growing interaction]
{Analysis of Hartree equation with an interaction
growing at the spatial infinity.
}
\author%[S. Masaki]
{Satoshi Masaki}
%\address{Division of Mathematics\\
%Gakushuin University\\
%Tokyo 171-8588, Japan}
%\email{masaki@math.gakushuin.ac.jp}
\date{}
\maketitle
\vskip-5mm
\centerline{Laboratory of Mathematics, 
Institute of Engineering, Hiroshima University,}
\centerline{Higashihiroshima Hiroshima, 739-8527, Japan}
\centerline{masaki@amath.hiroshima-u.ac.jp}

\begin{abstract}
We consider nonlinear Schr\"odinger equation with a Hartree-type nonlocal nonlinearity.
The case where a nonlinear interaction potential grows
at the spatial infinity is studied.
By virtue of an effective decomposition of the nonlinearity based on
conservation of mass, this kind of growing nonlinear interaction is known to contain an effect like a linear potential.
In this paper, 
a well-posedness result is obtained in a suitable energy space
for a class of interaction potential growing at the spatial infinity in at most the quadratic order.
If the growth rate of interaction potential is faster than the linear order,
a priori information on the center of mass plays a crucial role.
When the interaction potential is exactly in the quadratic order, solution are written explicitly.
% By an explicit representation
% of the solution, it turns out that this kind of
% nonlinearity contains an effect like a linear potential.
\end{abstract}

\section{Introduction}

This article is devoted to the study of the Cauchy problem of Hartree equation
\begin{equation}\label{eq:H}\tag{H}
	\left\{
	\begin{aligned}
	&i \partial_t u + \frac12 \Delta u = \eta (|x|^{-\nu}*|u|^2) u, \\
	&u(0)=u_0,
	\end{aligned}
	\right.
\end{equation}
where $(t,x) \in {\mathbb{R}}\times {\mathbb{R}}^d$, $d\geqslant1$, and $\eta\in{\mathbb{R}}$.
The function space to which the initial data $u_0$ belong will be specified later. 
We treat the case where the exponent $\nu$ is negative.
More specifically, let us consider $\nu \in [-2,0)$.
In such a case, the nonlinear interaction potential $\eta |x|^{-\nu}$ grows at the spatial infinity.
Growing nonlocal interaction naturally appears in physical contexts.
Indeed, Hartree equation \eqref{eq:H} is a generalized models of 
Schr\"odinger-Poisson system
\begin{equation}\label{eq:SP}\tag{SP}
	\left\{
	\begin{aligned}
	&i \partial_t u + \frac12 \Delta u = V_{\mathrm{P}} u, \\
	&-\Delta V_{\mathrm{P}} = |u|^2, \\
	&u(0)=u_0,
	\end{aligned}
	\right.
\end{equation}
which is a model equation for semiconductors.
When $d\geqslant 3$ the potential $V_{\mathrm{P}}$ is given by
	$V_{\mathrm{P}}(x)=c_d (|x|^{-(d-2)}*|u|^2)$,
where $c_d$ is a positive constant.
Hence \eqref{eq:SP} corresponds to the special case of
\eqref{eq:H} such that $\eta= c_d>0$
and $\nu = d-2 >0$.
We would remark that, in this case, nonlinear potential decays at the spatial infinity.
% Hartree equation \eqref{eq:H} is extensively studied after \cite{GV-MZ} (see \cite{CazBook} and references therein).
On the other hand, when $d\leqslant2$ the potential $V_{\mathrm{P}}$ has completely different shape:
\[
	V_{\mathrm{P}}(x) = \left\{
	\begin{aligned}
	&-\frac1{2\pi}(\log|x|*|u|^2), \quad (d=2), \\
	& -\frac1{2}(|x|*|u|^2), \quad (d=1).
	\end{aligned}
	\right.
\]
One sees that that the nonlinearity grows at the spatial infinity, which are the cases we want to work with.
In particular, when $d=1$ \eqref{eq:SP} corresponds to 
\eqref{eq:H} with $\eta=-1/2<0$ and $\nu=-1<0$.
% , that is,
% to \eqref{eq:nH} with $\lambda=1/2>0$ and $\gamma=1>0$.
% Then, the negative Hartree equation \eqref{eq:nH} appears as a generalized model
% with respect to $d$ and $\gamma$.
% It turns out that the nonlinear interaction is defocussing (or repulsive) if $\lambda>0$,
% and focusing (or attractive) if $\lambda<0$.

This article is a consequence of \cite{Ma2DSPe} in which
global well-posedness of \eqref{eq:SP} for dimensions $d=1$ and $d=2$
is shown in an energy class
(see also \cite{DR-JMP,StrSIAMMA,StiMMMAS} for one dimensional case).
Another type of local existence result on \eqref{eq:SP} for $d=2$
is established in \cite{Ma2DSP}.
Since the global well-posedness of \eqref{eq:H} for $\nu \in [-1,0)$ 
follows by an adaption of the argument in \cite{Ma2DSPe} (see Theorem \ref{thm:appendix1}),
in this article, we concentrate on 
$\nu \in [-2,-1)$, in which case
 the growth rate of the nonlinear potential is faster
than in the previous results.
It will turn out that a priori information on the center of mass plays a crucial role.
Further, when $\nu=-2$, the equation \eqref{eq:H} is solved explicitly.

To make notation clear, we introduce $\gamma=-\nu\in (0,2]$
and $\lambda=-\eta$, and consider
\begin{equation}\label{eq:nH}\tag{nH}
	\left\{
	\begin{aligned}
	&i \partial_t u + \frac12 \Delta u = -\lambda (|x|^{\gamma}*|u|^2) u, \\
	&u(0)=u_0.
	\end{aligned}
	\right.
\end{equation}
In what follows, we call \eqref{eq:nH} as a \emph{negative Hartree equation}
and distinguish it from \eqref{eq:H} by assuming $\nu,\gamma>0$.
We use the notation $\Jbr{x}:=(1+|x|^2)^{1/2}$ for $x\in {\mathbb{R}}^d$
and denote by ${\mathcal{F}}$ the Fourier transform in ${\mathbb{R}}^d$;
\[	
	{\mathcal{F}} f (\xi)=(2\pi)^{-d/2}\int_{{\mathbb{R}}^d}e^{-ix\cdot\xi}f(x)dx.
\]
For nonnegative numbers $s$ and $r$, we define a function space $\Sigma^{s,r}$ by
\[
	\Sigma^{s,r}= \Sigma^{s,r}({\mathbb{R}}^d) := \{f \in H^s({\mathbb{R}}^d) | \Jbr{x}^r f(x) \in L^2({\mathbb{R}}) \}
\]
with a norm $\norm{f}_{\Sigma^{s,r}}^2:= \norm{f}_{H^s({\mathbb{R}}^d)}^2+ \norm{\Jbr{x}^{r}f}_{L^2({\mathbb{R}}^d)}^2$, where $H^s$ stands for the usual Sobolev space.
Let us introduce an \emph{energy}
\[
	E[u(t)] = \frac12 \Lebn{\nabla u(t)}2^2
	- \frac{\lambda}{4} \iint_{{\mathbb{R}}^d\times {\mathbb{R}}^d} |x-y|^{\gamma}
	|u(t,x)|^2|u(t,y)|^2 dxdy,
\]
a \emph{center of mass}
\[
	X[u(t)] = \int_{{\mathbb{R}}^d} y |u(t,y)|^2 dy,
\]
and a \emph{momentum}
\[
	P[u(t)] = \operatorname{Im} \int_{{\mathbb{R}}^d} \overline{u(t,y)} \nabla u(t,y) dy
	=\int_{{\mathbb{R}}^d} \xi |{\mathcal{F}} u(t,\xi)|^2 d\xi.
\]
Notice that $E[u]$, $X[v]$, and $P[w]$ make sense for $u \in \Sigma^{1,\gamma/2}$,
$v\in \Sigma^{0,1/2}$, and $w\in H^{1/2}$, respectively.

\subsection{\texorpdfstring{Main result 1 - the case $\gamma<2$}{Main result 1}}
We first sate our result for $\gamma\in (1,2)$.

\begin{theorem}\label{thm:main1}
	Let $d\geqslant 1$, $\gamma \in (1,2)$, and $\lambda \in {\mathbb{R}}$.
	Then, \eqref{eq:nH} is globally well-posed in $\Sigma^{1,\gamma/2}$.
	More precisely,
	for any $u_0 \in \Sigma^{1,\gamma/2}$, there exists a global solution
	$u\in C({\mathbb{R}};\Sigma^{1,\gamma/2}) \cap C^1({\mathbb{R}};(\Sigma^{1,\gamma/2})^{\prime})$ to \eqref{eq:nH}.
	The solution conserves the mass $\Lebn{u(t)}2$, the energy $E[u(t)]$, and 
	the momentum $P[u(t)]$.
	The solution is unique in the following class:
\[
	\left\{  u\in C({\mathbb{R}},\Sigma^{1,\gamma/2}) \Bigm| P[u(t)] = \mathrm{const.} \right\}.
\]	
\end{theorem}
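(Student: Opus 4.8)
The plan is to combine a contraction-mapping local theory with conserved-quantity a priori bounds, the latter organized around a transformation to the frame of the center of mass. First I would establish local well-posedness in $\Sigma^{1,\gamma/2}$ by viewing \eqref{eq:nH} as a linear Schr\"odinger equation $i\partial_t u + \frac12\Delta u = W(t,x)u$ with the self-consistent potential $W = W[u] := -\lambda(|x|^\gamma * |u|^2)$ and running a fixed-point iteration. Since $\gamma<2$ the potential is subquadratic, and the basic nonlinear estimate rests on the pointwise bound $|x-y|^\gamma \lesssim \Jbr{x}^\gamma + \Jbr{y}^\gamma$ (valid for $\gamma\geqslant 1$), which controls $\Jbr{x}^{-\gamma}W[u]$ and its first derivatives by the mass and by the weighted norm $\norm{\Jbr{x}^{\gamma/2}u}$. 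This makes $u\mapsto W[u]u$ locally Lipschitz from $\Sigma^{1,\gamma/2}$ into a space on which the subquadratic linear propagator acts boundedly; this is the step I would adapt most directly from \cite{Ma2DSPe}.

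Second, I would record the conservation laws for the constructed solution, justifying the formal computations on a regularized equation and passing to the limit. Conservation of mass is the $L^2$-unitarity of the flow; conservation of $E$ is the Hamiltonian structure; conservation of $P$ follows from the translation invariance of the convolution kernel. The crucial consequence is that the first moment obeys $\frac{d}{dt}X[u(t)] = P[u(t)]$, so that, $P$ and $M=\Lebn{u}2^2$ being conserved, the normalized center of mass $\xi(t):=X[u(t)]/M$ moves affinely, $\xi(t)=\xi(0)+(P/M)t$. This is exactly the data needed to define the Galilean boost plus translation $u\mapsto v$ centering the mass at the origin and zeroing the momentum, under which \eqref{eq:nH} is preserved (its nonlinearity being translation invariant) while $v(t)$ has centered mass for every $t$.

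The heart of the argument is the global a priori bound, and here the main obstacle is that the potential grows at infinity, so energy conservation alone does not close an estimate for $\norm{\Jbr{x}^{\gamma/2}u}$. The key analytic input is convexity: since $\gamma\geqslant 1$, the map $y\mapsto|x-y|^\gamma$ is convex, so Jensen's inequality gives $\int|x-y|^\gamma|u(t,y)|^2 dy \geqslant M\,|x-\xi(t)|^\gamma$, whence the potential energy dominates the centered weighted norm, namely $\iint|x-y|^\gamma|u(x)|^2|u(y)|^2\,dxdy \geqslant M\int|x-\xi(t)|^\gamma|u(t,x)|^2 dx$. In the attractive case $\lambda<0$ the nonlinear term in $E$ is positive, so this inequality bounds both $\Lebn{\nabla u(t)}2$ and $\int|x-\xi(t)|^\gamma|u|^2$ uniformly in time; transferring back from $\xi(t)$ to the origin costs only the factor $\Jbr{\xi(t)}^\gamma\lesssim(1+|t|)^\gamma$, giving a polynomial-in-$t$ bound for $\norm{u(t)}_{\Sigma^{1,\gamma/2}}$. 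In the repulsive case $\lambda>0$ the potential energy enters with the unfavorable sign, and instead I would run a moment estimate: writing $W(t)=\int|x|^\gamma|u|^2$, one computes $\dot W=\gamma\int|x|^{\gamma-2}x\cdot\operatorname{Im}(\overline{u}\nabla u)$, bounds it by $\norm{\Jbr{x}^{\gamma-1}u}_{L^2}\Lebn{\nabla u}2$, uses $\Lebn{\nabla u}2^2\lesssim E+MW$ together with the elementary upper bound $\iint|x-y|^\gamma|u|^2|u|^2\lesssim MW$, and interpolates $\norm{\Jbr{x}^{\gamma-1}u}_{L^2}\lesssim(1+W)^{1-1/\gamma}$. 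This yields the differential inequality $\dot W\lesssim(1+W)^{3/2-1/\gamma}$, whose exponent is strictly less than one precisely because $\gamma<2$; hence $W$ grows at most polynomially and no finite-time blow-up occurs. Combining the two cases extends the local solution globally.

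Finally, for uniqueness in the stated class I would take two solutions $u_1,u_2\in C(\mathbb{R};\Sigma^{1,\gamma/2})$ with the same datum and with $P[u_j(t)]$ constant, and estimate $w=u_1-u_2$. The constant-momentum hypothesis is used precisely here: it forces both centers of mass onto the identical affine trajectory $\xi(t)$, so the same centering transformation applies to both, after which the potentials $W[u_j]$ enjoy the uniform bounds from the previous step. Writing $W[u_1]u_1 - W[u_2]u_2 = W[u_1]w + (|x|^\gamma*(|u_1|^2-|u_2|^2))u_2$ and performing an $L^2$-type energy estimate in the centered frame, the growth of the kernel is absorbed by the a priori bounds while the difference of potentials is controlled by $\norm{w}$ with weights, so Gronwall's inequality gives $w\equiv 0$. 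Without the momentum constraint the centering trajectory is not determined by the data alone, the transformation cannot be synchronized between the two solutions, and this comparison breaks down, which is why uniqueness is asserted only under conservation of momentum.
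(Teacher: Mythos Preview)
Your global a priori bounds and the uniqueness argument are close in spirit to the paper's, and the Jensen-inequality lower bound in the attractive case is a pleasant variant that the paper does not use. The genuine gap is in the local step.

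The difficulty you yourself name---that the nonlinear potential grows like $|x|^\gamma$---already blocks the fixed-point you describe. With only $u\in\Sigma^{1,\gamma/2}$ one has $\Jbr{x}^{\gamma/2}u\in L^2$ but not $\Jbr{x}^\gamma u\in L^2$, so $W[u]u\notin L^2$ and the Duhamel integral against the free propagator does not land back in $\Sigma^{1,\gamma/2}$. Your pointwise bound $|x-y|^\gamma\lesssim\Jbr{x}^\gamma+\Jbr{y}^\gamma$ does not cure this: after subtracting the leading piece $M|x|^\gamma$ one is left with a remainder of order $|x|^{\gamma-1}|y|$ (the first-order Taylor term), which is still unbounded in $x$ precisely when $\gamma>1$. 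The phrase ``subquadratic linear propagator'' is where the missing idea should go, but you never specify which potential is absorbed, and the natural candidate $-\lambda M|x|^\gamma$ still leaves an unbounded remainder.

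The paper's remedy is to reverse your order of operations: it passes to the center-of-mass frame \emph{before} any contraction argument. After the Galilean boost one has $X[\widetilde u(t)]\equiv 0$, and then the \emph{second}-order remainder
\[
K(x,y)=|x-y|^\gamma-|x|^\gamma+\gamma\Jbr{x}^{\gamma-2}x\cdot y
\]
(suitably cut off near the origin) satisfies $\sup_x|K(x,y)|\leqslant C\Jbr{y}^\gamma$ and $\sup_x|\nabla_xK(x,y)|\leqslant C\Jbr{y}$; the linear-in-$y$ correction is exactly what integrates to zero against $|\widetilde u(y)|^2$ thanks to $X[\widetilde u]=0$. With this subtraction the equation becomes a bounded perturbation of $i\partial_t+\tfrac12\Delta+\lambda M|x|^\gamma\chi(|x|)$, for which Strichartz estimates and a standard contraction in $\Sigma^{1,\gamma/2}$ are available. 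Conservation of momentum is then \emph{proved for the local solution} via a $\Sigma^{2,2}$-regularization, which guarantees that $X[\widetilde u]\equiv 0$ persists and that the modified equation coincides with \eqref{eq:nH}. In short, the centering is not merely a device for global bounds; when $\gamma>1$ it is what makes the local theory close at the regularity $\Sigma^{1,\gamma/2}$, and this is why the momentum constraint also enters the uniqueness statement.
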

\begin{remark}
\begin{enumerate}
\item 
We say that uniqueness holds unconditionally if it holds
under $C({\mathbb{R}};\Sigma^{1,\gamma/2})$.
In this theorem, the uniqueness of \eqref{eq:nH} holds conditionally since
the conservation of momentum is additionally required.
For the unconditional uniqueness of nonlinear Schr\"odinger equation
with power nonlinearity, we refer the reader to \cite{FPT-CM,FT-CCM,KaJAM95,WT-HMJ}.
\item Conservation of $P[u(t)]$ is equivalent to $X[u(t)]\equiv M ({\bf a}t+{\bf b})$, where
$M$, ${\bf a}$, and ${\bf b}$ are defined in \eqref{def:M} and \eqref{def:ab} below.
\item We have the following bound on $L^2$-norms of $\nabla u(t)$ and $\Jbr{x}^{\gamma/2}u(t)$
(see also Remark \ref{rmk:expgrow}):
\begin{equation}\label{eq:dispdu}
	\Lebn{\nabla u(t)}2 \leqslant \left\{
	\begin{aligned}
	&C\Jbr{t}^{\frac{\gamma}{2-\gamma}} && \text{if }\lambda>0,\\
	&C && \text{if }\lambda<0, 
	\end{aligned}
	\right.
\end{equation}
\begin{equation}\label{eq:dispxu}
	\Lebn{\Jbr{x}^{\frac{\gamma}2} u(t)}2 \leqslant \left\{
	\begin{aligned}
	&C\Jbr{t}^{\frac{\gamma}{2-\gamma}} && \text{if }\lambda>0,\\
	&C\Jbr{t}^{\frac{\gamma}2} && \text{if }\lambda<0.
	\end{aligned}
	\right.
\end{equation}
\item We show this theorem in a general framework
by replacing $\lambda |x|^\gamma$ with an abstract potential
in a class of divergent functions
(Theorem \ref{thm:general}).
\end{enumerate}
\end{remark}
\begin{remark}
A formula $|x|^{-d+\alpha} * = \Lambda(\alpha) (-\Delta)^{-\frac\alpha2}$
is known for $0<\alpha<d$,
where $\Lambda(\alpha)=2^{\alpha-d/2}\Gamma(\alpha/2)/\Gamma(d/2-\alpha/2)$
(see \cite{StBook}).
If $\gamma \in (0,2)$ and if $\gamma-2>-d$ then we have
\[
	-\Delta (|x|^\gamma*|u|^2) = -\gamma(\gamma-2+d) (|x|^{\gamma-2}*|u|^2)
	=\Lambda(d+\gamma)(-\Delta)^{-\frac{d+\gamma}{2}+1}|u|^2.
\]
In a sense, this implies $|x|^\gamma*=\Lambda(d+\gamma) (-\Delta)^{-\frac{d+\gamma}2}$ ,
which is an extension of the above formula to $\alpha=d+\gamma>d$.
\end{remark}
\subsection{\texorpdfstring{Main Result 2 - the case $\gamma=2$}{Main result 2}}
We next consider the case $\gamma=2$.
There exists an explicit solution in this case.
Moreover, uniqueness property holds without conservation of momentum.

Assume $u_0 \in \Sigma^{1,1}$ and introduce the following vectors and numbers.
We first let 
\begin{equation}\label{def:M}
M= \norm{u_0}_{L^2}^2
\end{equation}
 and define constant vectors
\begin{align}
	{\bf a} &{}:= M^{-1}P[u_0],&
	{\bf b} &{}:= M^{-1}X[u_0],
	\label{def:ab}
\end{align}
which represents the (scaled) momentum and
the center of mass, respectively. Let
\begin{equation}\label{eq:cde}
\begin{aligned}
	c&{}:=\Lebn{\nabla u_0}2^2, &
	d&{}:= \operatorname{Im}\int \overline{u_0} x\cdot \nabla u_0 dx  , &
	e&{}:=\Lebn{x u_0 }2^2.
\end{aligned}
\end{equation}
For $\omega \in {\mathbb{R}}$, we set 
$\mathcal{U}_{\omega}(t) = e^{it(\frac{\Delta}{2}+ \frac{\omega}{2}|x|^2)}$.
An integral representation of $\mathcal{U}_{\omega}(t)$ is known 
as Mehler's formula.
For a vector ${\bf a}$, 
let $\tau_{\bf a}$ and $\pi_{\bf a}$ be translation operators defined by
$(\tau_{\bf a}f)(x)=f(x-{\bf a})$ and $(\pi_{\bf a} f)(x)=e^{ix\cdot {\bf a}}f(x)$,
respectively.

\begin{theorem}\label{thm:main2}
Let $d\geqslant1$, $\gamma=2$ and $\lambda=\pm1/2$. 
Then, \eqref{eq:nH} is globally well-posed in $\Sigma^{1,1}$.
Moreover, the uniqueness holds unconditionally.
Furthermore, the unique solution of \eqref{eq:nH} is given by
\[
	u(t) = \left\{
	\begin{aligned}
	&\exp\left({i\frac{|{\bf a}|^2}2t + i\psi_+(t)}\right) \tau_{{\bf a}t+{\bf b}}\pi_{{\bf a}}\mathcal{U}_{M}(t) \pi_{-{\bf a}}
	\tau_{-{\bf b}} u_0, &&\text{if }\lambda=1/2, \\
	&\exp\left({i\frac{|{\bf a}|^2}2t + i\psi_-(t)}\right) \tau_{{\bf a}t+{\bf b}}\pi_{{\bf a}}
 \mathcal{U}_{-M}(t) \pi_{-{\bf a}}
	\tau_{-{\bf b}} u_0, &&\text{if }\lambda=-1/2,
	\end{aligned}
	\right.
\]
where
\begin{equation*}
\begin{aligned}
	\psi_+(t) ={}&
	\frac{c -M|{\bf a}|^2 +M(e-M|{\bf b}|^2) }{4M^{3/2}}\sinh (\sqrt{M}t)\cosh (\sqrt{M}t) 
	\\&{} 
	+ \frac{d-M{\bf a}\cdot{\bf b}}{2M}\sinh^2 (\sqrt{M}t) 
 + \frac{- c + M|{\bf a}|^2 +M(e - M|{\bf b}|^2)}{4M} t 
\end{aligned}
\end{equation*}
and
\[
\begin{aligned}
	\psi_-(t) ={}& \frac{c-M|{\bf a}|^2 -M (e- M|{\bf b}|^2)}{4M^{3/2}}\sin (\sqrt{M}t)\cos (\sqrt{M}t) \\
	&{} + \frac{M{\bf a}\cdot{\bf b} - d}{2M} \sin^2 (\sqrt{M}t) 
	+ \frac{- c+M|{\bf a}|^2 -M(e-M|{\bf b}|^2)}{4M} t .
\end{aligned}
\]
\end{theorem}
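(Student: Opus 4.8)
The plan is to exploit the special algebraic structure available at $\gamma=2$. Since $|x-y|^2 = |x|^2 - 2x\cdot y + |y|^2$, conservation of mass gives the pointwise expansion
\[
	(|x|^2 * |u(t)|^2)(x) = M|x|^2 - 2 x\cdot X[u(t)] + \int_{{\mathbb{R}}^d} |y|^2 |u(t,y)|^2 \, dy ,
\]
so the nonlinearity splits into a harmonic potential $-\lambda M|x|^2$, a time-dependent linear potential, and a purely time-dependent (space-independent) term. The crucial preliminary step is to make $X[u(t)]$ explicit. First I would establish the moment identities $\tfrac{d}{dt}X[u(t)] = P[u(t)]$ and, decisively for $\gamma=2$, $\tfrac{d}{dt}P[u(t)] = 0$: indeed $\partial_{x_k}(|x|^2*|u|^2) = 2M x_k - 2X_k[u]$, whence $\tfrac{d}{dt}P_k[u] = -\int|u|^2\partial_{x_k}V = 2\lambda M X_k - 2\lambda X_k M = 0$ with $V=-\lambda(|x|^2*|u|^2)$. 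Thus the momentum is conserved for \emph{every} solution without any extra hypothesis, which forces $X[u(t)] = M({\bf a}t + {\bf b})$ with ${\bf a},{\bf b}$ as in \eqref{def:ab}. This is exactly what eliminates the momentum condition of Theorem \ref{thm:main1} and produces unconditional uniqueness here.

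With $X[u(t)]$ now explicit, \eqref{eq:nH} becomes the \emph{linear} equation
\[
	i\partial_t u + \tfrac12\Delta u + \lambda M|x|^2 u - 2\lambda M\, x\cdot({\bf a}t+{\bf b})\, u + \lambda c_0(t) u = 0 ,
\]
where $c_0(t) := \int|y|^2|u(t,y)|^2\,dy$ enters only through a real, spatially constant coefficient. I would remove the linear-in-$x$ potential by a Galilean change of frame: writing $u(t,x) = e^{i\theta(t,x)} (\tau_{{\bf a}t+{\bf b}}\pi_{\bf a} w)(t,x)$ along the trajectory $\xi(t)={\bf a}t+{\bf b}$ for a suitable phase $\theta$ reduces the dynamics to the pure harmonic oscillator $i\partial_t w + \tfrac12\Delta w + \lambda M|x|^2 w = 0$, that is $i\partial_t w = -(\tfrac{\Delta}2 + \tfrac{\omega}2|x|^2)w$ with $\omega = 2\lambda M = \pm M$; by definition this is solved by $w(t)=\mathcal{U}_{\pm M}(t)w(0)$, and inverting the change of frame at $t=0$ gives $w(0)=\pi_{-\bf a}\tau_{-\bf b}u_0$. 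Reassembling the transformation factors yields precisely the conjugated propagator $\tau_{{\bf a}t+{\bf b}}\pi_{\bf a}\mathcal{U}_{\pm M}(t)\pi_{-\bf a}\tau_{-\bf b}$ and the Galilean phase $e^{i|{\bf a}|^2 t/2}$ of the statement. Global well-posedness in $\Sigma^{1,1}$ follows since $\mathcal{U}_{\pm M}(t)$ is unitary on $L^2$ and bounded on $\Sigma^{1,1}$ through Mehler's formula. For uniqueness one factors out the self-consistent phase, setting $\tilde u(t)=e^{-i\lambda\int_0^t c_0(s)ds}u(t)$, so that $\tilde u$ solves a \emph{fixed} linear equation with real quadratic potential; conservation of the $L^2$ norm for that equation then makes the difference of two solutions with the same data vanish, and $|u|^2=|\tilde u|^2$ propagates back the equality of the $c_0$'s.

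The remaining task, which I expect to be the main computational obstacle, is to identify the scalar phase $\psi_\pm(t)$. Its two sources are the classical action of the change of frame and the self-consistent contribution $\lambda\int_0^t c_0(s)\,ds$. Because phases and the multiplier $\pi_{\bf a}$ do not affect the modulus, and because $w$ has zero center of mass and zero momentum (so its classical orbit stays at the origin), one finds $c_0(t) = \Lebn{x w(t)}2^2 + M|{\bf a}t+{\bf b}|^2$, reducing everything to the internal second moment of $w$ under the harmonic flow. The triple $\big(\Lebn{\nabla w}2^2,\ \operatorname{Im}\int\overline{w}\, x\cdot\nabla w,\ \Lebn{x w}2^2\big)$ obeys a closed linear ODE system whose generator has eigenvalues $\pm\sqrt{\omega}$: for $\lambda=\tfrac12$ ($\omega=M>0$) this produces $\sinh(\sqrt M t),\cosh(\sqrt M t)$, and for $\lambda=-\tfrac12$ ($\omega=-M<0$) it produces $\sin(\sqrt M t),\cos(\sqrt M t)$. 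Solving it with initial data the centered moments $c-M|{\bf a}|^2$, $d-M{\bf a}\cdot{\bf b}$, $e-M|{\bf b}|^2$ read off from \eqref{eq:cde} after conjugation by $\pi_{-\bf a}\tau_{-\bf b}$, and then integrating in time, reproduces the displayed $\psi_+$ and $\psi_-$. The delicate point is to carry all signs and numerical constants correctly through the change of frame and the time integration so that both cases collapse into the stated closed forms; the rigorous justification of the moment identities at $\Sigma^{1,1}$ regularity is handled by the standard regularization argument.
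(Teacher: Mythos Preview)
Your proposal is correct and follows essentially the same route as the paper: expand $|x-y|^2$, use conservation of mass to isolate the harmonic part, establish $\frac{d}{dt}P[u]=0$ so that $X[u(t)]=M({\bf a}t+{\bf b})$ for \emph{every} solution, pass to the center-of-mass frame to reduce to the linear oscillator $\mathcal{U}_{\pm M}(t)$, and read off the scalar phase from the second moment of the oscillator flow. The paper packages this as a special case of a slightly more general result (equation \eqref{eq:2H} with an additional linear harmonic term), builds the solution upward from the oscillator rather than reducing downward, and computes $\int_0^t\Lebn{xw(s)}2^2\,ds$ via the Mehler decomposition of $\mathcal{U}_\omega$ instead of the moment ODE you suggest; these are cosmetic differences.

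The one place where the paper is noticeably more careful than your sketch is the justification of $\frac{d}{dt}P[u(t)]=0$ at $\Sigma^{1,1}$ regularity (needed for unconditional uniqueness). You call this ``the standard regularization argument'', but the naive approach---approximate $u_0$ in $\Sigma^{2,2}$, compute $\frac{d}{dt}P$ for the approximants, and pass to the limit---presupposes a well-posedness theory in which the approximants converge, which is precisely what is being proved. The paper sidesteps this circularity in Proposition~\ref{prop:motion}: given an arbitrary $\Sigma^{1,1}$ solution, it first gauges away the spatially constant phase, then absorbs the residual linear-in-$x$ term into a further translation governed by an auxiliary function $H(t)$ solving $H''=\omega H-\zeta X[u]$, thereby reducing to the \emph{fixed} linear oscillator for which $X$ is explicitly $C^\infty$; this bootstraps $X[u]\in C^3$ and hence $P[u]\in C^2$ without ever differentiating $P$ directly. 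Your outline would go through, but this step deserves more than a one-line dismissal.
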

By a scaling argument, the general $\lambda>0$ case and $\lambda<0$ case are reduced to the
case $\lambda=1/2$ and the case $\lambda=-1/2$, respectively. 
\begin{remark}
From the explicit representation of the solution, we can deduce that
the nonlinearity causes the following three effects on large time behavior
of the solution.
First is the nonlinear phase $e^{i\psi_{\pm}(t)}$.
It is known that if $\nu>1$ (if $\gamma<-1$) then the nonlinear dynamics
is compared with free one 
but if $\nu\leqslant 1$ (if $\gamma \geqslant -1$) then it is not and a phase correction
must be taken into account (cf. long-range scattering \cite{GO-CMP}).
It seems that $e^{i\psi_{\pm}(t)}$ is a correction of this kind.
Furthermore, the linear dynamics of which the nonlinear dynamics can be regarded 
as a perturbation changes from $e^{it\Delta/2}$ into $e^{it(\Delta\pm M|x|^2)/2}$.
This is the second respect.
It is important to remark that this modified linear dynamics depends
on the mass of the solution.
This phenomena occurs at least for $\gamma>0$.
When $\gamma=0$, a solution of \eqref{eq:nH} is given by
$u(t)=e^{-i\lambda t M} e^{it\Delta/2}u_0$, which is a free dynamics with a phase
correction.
Third is the translation in both Fourier and physical spaces,
which depends only on the momentum ${\bf a}$ and the center of mass ${\bf b}$.
This represents the motion of the center of mass and is involved at least for $\gamma>1$.
\end{remark}
\begin{remark}\label{rmk:expgrow}
Let $\gamma=2$ and $\lambda=1/2$.
A calculation shows
\begin{multline*}
	\Lebn{ \nabla u }2^2=\Lebn{(\cosh(\sqrt{M}t)\nabla + i\sqrt{M} \sinh(\sqrt{M}t)x) u_0}2^2 \\
	+M\abs{\bf a}^2 - M\abs{ \cosh(Mt){\bf a}+ \sqrt{M}\sinh (\sqrt{M}t){\bf b} }^2
\end{multline*}
and
\begin{multline*}
	\Lebn{ \sqrt{M}x u }2^2=\Lebn{\left({\sinh(\sqrt{M}t)}\nabla + i\sqrt{M}\cosh(\sqrt{M}t)x\right) u_0}2^2 \\
	+M^2\abs{{\bf a}t+{\bf b}}^2 - M\abs{ \sinh(Mt){\bf a}+ \sqrt{M}\cosh (\sqrt{M}t){\bf b} }^2.
\end{multline*}
Hence, $\Lebn{ \nabla u (t)}2=O(e^{|t|})$ and $\Lebn{ x u (t)}2=O(e^{|t|})$ as $|t|\to\infty$
for a suitable data (for example, $u_0(x)=e^{-|x|^2}$).
These growth rates are much faster than those for free solutions; $\Lebn{\nabla e^{i{t\Delta}/2}u_0}2=O(1)$
and $\Lebn{x e^{i{t\Delta}/2}u_0}2=O(|t|)$ as $|t|\to\infty$.
This is because the nonlinearity, which is regarded as a repulsive quadratic potential and a remainder,
accelerates the dispersion.
Carles studies effects of repulsive quadratic potentials in \cite{CaSIAMMA}.
The above exponential growths for $\gamma=2$ are, in a sense, equalities of \eqref{eq:dispdu}
and \eqref{eq:dispxu} in the limit $\gamma\uparrow2$.
If a similar acceleration occurred for $\gamma<2$ and $\lambda>0$,
it seems reasonable that the time
growths of $\Lebn{\nabla u}2$ and $\Lebn{xu}2$ are faster than those of free solutions
as in \eqref{eq:dispdu} and \eqref{eq:dispxu}.
\end{remark}

\subsection{\texorpdfstring{Transformation of \eqref{eq:nH}}{Transformation of (nH)}}
What is difficult when we solve \eqref{eq:nH} is the fact that the nonlinear potential 
$(|x|^\gamma*|u|^2)$ grows at the spatial infinity.
For this, it is hard to apply a usual perturbation argument to the corresponding integral equation.
To overcome this respect, we introduce a transformation of \eqref{eq:nH}, which is a key ingredient
of our argument.
Let us now observe this with a formal computation.

We consider the case $\gamma=2$ as a model. 
The equation is then
\[
	i\partial_t u + \frac12 \Delta u = -\lambda (|x|^2*|u|^2)u.
\]
The right hand side is equal to
\[
	-\lambda |x|^2 \Lebn{u(t)}2^2 u(x)
	 + 2\lambda x \cdot X[u] u(x)
	-\lambda \int_{{\mathbb{R}}^d} |y|^2|u(y)|^2 dy u(x).
\]
As long as $\lambda \in{\mathbb{R}}$, we can expect that $\Lebn{u(t)}2$ is conserved.
Hence the first term is regarded as $-\lambda M |x|^2 u(x)$, where $M$ is 
as in \eqref{def:M}.
Now, $u$ solves
\begin{equation}\label{eq:aux1}
	i\partial_t u + \frac12 \Delta u + \lambda M |x|^2u
	=  2\lambda x \cdot X[u]u
	-\lambda \int_{{\mathbb{R}}^d} |y|^2|u(y)|^2 dy u.
\end{equation}
Although the right hand side of this equation is still divergent,
the main part of the nonlinearity is removed and so
the growth rate is not $O(|x|^2)$ any longer but $O(|x|^1)$ as $|x|\to\infty$.
This argument is introduced in \cite{Ma2DSPe}.
Now, let us go one step further.
We next observe from \eqref{eq:nH} that 
\[
	\frac{d}{dt}X[u(t)] = P[u(t)]
\]
follows. Similarly, by a formal calculation, one verifies that
$\frac{d}{dt}P[u(t)]=0$.
Thus, integrating twice gives us
\[
	X[u(t)] = M({\bf a}t + {\bf b}),
\]
where ${\bf a}$ and ${\bf b}$ are defined in \eqref{def:ab}.
Now, we introduce a new unknown
\begin{equation}\label{def:ut}
	\widetilde{u}(t,x) = e^{-i \frac{|{\bf a}|^2}{2}t}
	(\pi_{{-\bf a}} \tau_{-{\bf a}t-{\bf b}} u)(x)  .
\end{equation}
Namely, we work with the center-of-mass frame.
Then, one verifies that $\widetilde{u}$ also solves \eqref{eq:aux1} and
$X[\widetilde{u}(t)] \equiv 0$.
These facts imply that $\widetilde{u}$ is a solution to
\begin{equation}\label{eq:aux2}
	i\partial_t \widetilde{u} + \frac12 \Delta \widetilde{u} + \lambda M |x|^2\widetilde{u}
	=  -\lambda \int_{{\mathbb{R}}^d} |y|^2|\widetilde{u}(y)|^2 dy \widetilde{u}.
\end{equation}
Now, the right hand side is bounded with respect to $x$.
Let us further set
\begin{equation}\label{def:w}
	w(t,x)= \widetilde{u}(t,x) \exp\left(-i \lambda \int_0^t \int_{{\mathbb{R}}^2} |y|^2 |\widetilde{u}(s,y)|^2 dyds\right).
\end{equation}
Then, $w$ solves a linear Schr\"odinger equation $i\partial_t w + \frac12 \Delta w + \lambda M|x|^2 w=0$.
Applying inverses of \eqref{def:w} and \eqref{def:ut},
we obtain an explicit solution of \eqref{eq:nH}.

The argument in the case $\gamma \in (1,2)$ is similar.
We introduce $\widetilde{u}$ as in \eqref{def:ut} and try to solve
a \emph{modified Hartree equation}
\begin{equation}\label{eq:mH}\tag{mH}
\left\{
\begin{aligned}
	&i \partial_t \widetilde{u} + \frac12 \Delta \widetilde{u} + \lambda M |x|^{\gamma}\chi(|x|) \widetilde{u} \\
	&\qquad\qquad {}= -\lambda \int_{{\mathbb{R}}^d} \left(|x-y|^{\gamma}-|x|^\gamma\chi(|x|) + \gamma \Jbr{x}^{\gamma-2}x\cdot y\right)|\widetilde{u}(y)|^2dy \widetilde{u}, \\
	& \widetilde{u}(0)=\pi_{-\bf a}\tau_{-\bf b} u_0
\end{aligned}
\right.
\end{equation}
instead of \eqref{eq:aux2},
where $\chi$ is a smooth non-decreasing function such that
$\chi(r)=0$ for $r\leqslant 1$ and $\chi(r)=1$ for $r\geqslant2$.
It will turn out that
\eqref{eq:mH} can be solved in a standard way because
the growth of the nonlinearity of \eqref{eq:nH} is
successfully removed by the transformation.
It is important to note that \eqref{eq:nH} is \emph{not} a perturbation of
free equation $i\partial_t \psi + (1/2)\Delta \psi=0$ any longer but
of $i\partial_t \psi + (1/2)\Delta \psi + \lambda M |x|^{\gamma}\chi(|x|) \psi=0$,
which involves a linear potential.

Oh considered in \cite{OhJDE} the Cauchy problem of 
nonlinear Schr\"odinger equation with a divergent potential and
$L^2$-subcritical power-type nonlinearity
(see also \cite{CazBook}).
In particular, the case where the potential is a quadratic polynomial
is extensively studied.
We refer the reader to \cite{CaAHP,CaSIAMMA,CaDCDS,CMS-SIAM,KVZ-CPDE,WZ-NA,ZxFM}.
 
The rest of this article is organized as follows:
We prove Theorem \ref{thm:main2} in the next Section,
and Theorem \ref{thm:main1} in Section 3.

\section{\texorpdfstring{Proof of Theorem \ref{thm:main2}}{Proof of Theorem 1.4}}
Let us prove our theorem for an equation with a harmonic potential
\begin{equation}\label{eq:2H}
	i\partial_t u + \frac12 \Delta u + \frac{\eta}2 |x|^2 u = -\frac\zeta2 (|x|^2*|u|^2)u, \quad
	u(0) = u_0\in \Sigma^{1,1}({\mathbb{R}}^d),
\end{equation}
where $\eta$ and $\zeta$ are real constants.
For $\omega\in {\mathbb{R}}$ and ${\bf a},{\bf b}\in {\mathbb{R}}^d$, we define an ${\mathbb{R}}^d$-valued
function $g_\omega(t)$ as
\begin{equation}\label{def:gomega}
	g_\omega (t) =
	\left\{
	\begin{aligned}
	&{\bf a}\frac{\sinh (\sqrt\omega t)}{\sqrt{\omega}} + 
	{\bf b}\cosh (\sqrt\omega t) , &&\omega>0, \\
	&{\bf a}t + {\bf b}, &&\omega=0 ,\\
	&{\bf a}\frac{\sin (\sqrt{|\omega|} t)}{\sqrt{|\omega|}}  + 
	{\bf b}\cos (\sqrt{|\omega|} t), &&\omega<0.
	\end{aligned}
	\right.
\end{equation}
Notice that $g_\omega(t)$ is a solution to $g_\omega^{\prime\prime}(t)=\omega g_\omega(t)$
with $g(0)={\bf b}$ and $g^\prime(0)={\bf a}$.
\begin{theorem}\label{thm:harmonic}
\begin{enumerate}
\item Let $d\geqslant1$, $\eta\in {\mathbb{R}}$, and $\zeta\in {\mathbb{R}}$.
Then, \eqref{eq:2H} is globally well-posed in $\Sigma^{1,1}$.
The uniqueness holds unconditionally.
Moreover, $X[u(t)]=M g_\eta(t)$ holds.
\item For a data $u_0 \in \Sigma^{1,1}({\mathbb{R}}^d)$,
define $M$ by \eqref{def:M} and set $\omega = \eta + \zeta M$.
Let $g_\iota(t)$ be defined in \eqref{def:gomega} with a parameter $\iota \in {\mathbb{R}}$
and the data ${\bf a}$ and ${\bf b}$ given by \eqref{def:ab}.
Then, the unique solution to \eqref{eq:2H} is written as
\[
	u(t,x) = e^{i\Psi_{\eta,\zeta}(t)}[ \tau_{g_\eta(t)} \pi_{g_\eta^\prime(t)}
	\pi_{-g_\omega^\prime(t)} \tau_{-g_\omega(t)}  \mathcal{U}_\omega(t)u_0](x)
\]
with
\[
	\Psi_{\eta,\zeta}(t) = \frac12(g_\eta(t)\cdot g_\eta^\prime(t) - g_\omega(t)\cdot
	g_\omega^\prime(t))
	-\frac{\zeta M}2\int_0^t |g_\omega(s)|^2 ds + \frac\zeta2 \psi_\omega(t),
\]
where $\psi_\omega$ is defined with $c$, $d$, and $e$ given by \eqref{eq:cde}
as follows:
\begin{itemize}
\item If $\omega>0$ then
\begin{align*}
	\psi_\omega(t) ={}&
	\frac{c+\omega e}{2\omega^{3/2}}\sinh(\sqrt{\omega}t) \cosh(\sqrt{\omega}t)
	+\frac{d}{\omega}\sinh^2(\sqrt{\omega}t) -\frac{c-\omega e}{2\omega}t
	;
\end{align*}
\item if $\omega=0$ then
\[
	\psi_0(t) = \frac13 ct^3 + dt^2 + et;
\]
\item if $\omega<0$ then
\begin{align*}
	\psi_\omega(t) ={}& -\frac{ 
	c +\omega e
	}{2|\omega|^{3/2}}\sin(\sqrt{|\omega|}t) \cos(\sqrt{|\omega|}t) 
	+\frac{d}{|\omega|}\sin^2(\sqrt{|\omega|}t)
	+\frac{
	c -\omega e
	}{2|\omega|}t.
\end{align*}
\end{itemize}
\end{enumerate}
\end{theorem}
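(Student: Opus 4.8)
The plan is to follow the transformation outlined for \eqref{eq:nH} but now keeping the harmonic term $\tfrac{\eta}2|x|^2$ explicit, reducing \eqref{eq:2H} to a \emph{linear} Schr\"odinger equation with a quadratic-plus-linear potential that can be integrated through the metaplectic flow $\mathcal{U}_\omega$. First I would expand the convolution using conservation of mass: since $\zeta\in{\mathbb{R}}$ the $L^2$ norm is conserved, and
\[
	(|x|^2*|u|^2)(x) = M|x|^2 - 2\,x\cdot X[u(t)] + \int_{{\mathbb{R}}^d}|y|^2|u(t,y)|^2\,dy,
\]
so that, with $\omega=\eta+\zeta M$, equation \eqref{eq:2H} becomes $i\partial_t u + \tfrac12\Delta u + \tfrac\omega2|x|^2 u = \zeta (x\cdot X[u])u - \tfrac\zeta2(\int|y|^2|u|^2\,dy)\,u$. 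The only genuinely nonlinear data remaining are the moments $X[u]$ and $\int|y|^2|u|^2\,dy$.

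The crucial point — and the reason uniqueness is \emph{unconditional} here, in contrast with Theorem \ref{thm:main1} — is that the center of mass obeys a closed ODE. Differentiating and using the equation (Ehrenfest relations) gives $\tfrac{d}{dt}X[u]=P[u]$ and $\tfrac{d}{dt}P[u]=\eta X[u]$, the self-interaction producing no net internal force by the antisymmetry of $\nabla_x|x-y|^2$ under $x\leftrightarrow y$. Hence $\tfrac{d^2}{dt^2}X[u]=\eta X[u]$ with $X[u](0)=M{\bf b}$ and $\tfrac{d}{dt}X[u](0)=P[u_0]=M{\bf a}$, so $X[u(t)]=M g_\eta(t)$ by \eqref{def:gomega}. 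This holds for \emph{any} $\Sigma^{1,1}$ solution without assuming conservation of momentum, because for $\gamma=2$ the internal force is exactly linear and the system closes on its own.

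Substituting $X[u]=Mg_\eta$ and peeling off the scalar moment through the global phase $\phi(t)=\tfrac\zeta2\int_0^t\!\int|y|^2|u(s,y)|^2\,dy\,ds$ (which does not change $|u|$ and is therefore self-consistent), I reduce matters to the linear equation
\[
	i\partial_t w + \tfrac12\Delta w + \tfrac\omega2|x|^2 w = \zeta M\,(x\cdot g_\eta(t))\,w,\qquad w(0)=u_0,
\]
a harmonic oscillator of frequency $\omega$ with time-dependent dipole forcing, well-posed in $\Sigma^{1,1}$ by the standard theory of (sub)quadratic potentials (cf.\ \cite{OhJDE,CaSIAMMA}), $\mathcal{U}_\omega(t)$ preserving $\Sigma^{1,1}$ via Mehler's formula. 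I would solve it by recentering along the classical trajectory: the unforced flow $\mathcal{U}_\omega(t)u_0$ carries its center along $g_\omega$, so applying $\tau_{g_\eta(t)}\pi_{g_\eta'(t)}\pi_{-g_\omega'(t)}\tau_{-g_\omega(t)}$ moves center and momentum from $(g_\omega,g_\omega')$ to $(g_\eta,g_\eta')$, i.e.\ translates by the classical correction $g_\eta-g_\omega$ and boosts by $g_\eta'-g_\omega'$. Matching the forced equation then fixes a real phase equal to the accumulated classical action; combined with $\phi$ it assembles into $\Psi_{\eta,\zeta}$, the Galilean boundary terms giving $\tfrac12(g_\eta\cdot g_\eta'-g_\omega\cdot g_\omega')$ while the action and moment gauge combine into $-\tfrac{\zeta M}2\int_0^t|g_\omega|^2\,ds+\tfrac\zeta2\psi_\omega$. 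The explicit $\sinh/\sin$ forms of $\psi_\omega$ in the three regimes $\omega\gtrless0$ come from inserting \eqref{def:gomega} and evaluating elementary integrals with $c,d,e$ from \eqref{eq:cde}.

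Finally, rather than rely on the formal derivation, I would \emph{define} $u$ by the stated formula and verify directly that $u\in C({\mathbb{R}};\Sigma^{1,1})$, that $u(0)=u_0$, and that it solves \eqref{eq:2H}; conservation of mass and $X[u]=Mg_\eta$ are then immediate. Uniqueness follows unconditionally: any $\Sigma^{1,1}$ solution conserves $M$, hence has the same $\omega$ and the same center-of-mass ODE, so after the gauge $\phi$ it solves the same linear equation and must coincide with $w$. The main obstacle is the third step: establishing the metaplectic conjugation identities intertwining $\mathcal{U}_\omega$ with $\tau$ and $\pi$ along the classical flow, and bookkeeping the Galilean phases precisely enough to recover the closed-form $\psi_\omega$. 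A secondary technical point is propagating the $\Sigma^{1,1}$ bounds globally in time, which by Remark \ref{rmk:expgrow} may grow exponentially when $\omega>0$.
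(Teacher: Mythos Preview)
Your approach is essentially the same as the paper's: expand the convolution via mass conservation, identify the closed center-of-mass ODE, strip off the scalar moment as a global phase, and conjugate by Galilean transforms along the classical trajectory to reduce to the linear flow $\mathcal{U}_\omega$. The paper organizes the construction in the reverse order (starting from $\mathcal{U}_\omega(t)\pi_{-\bf a}\tau_{-\bf b}u_0$ and building outward), but the content is the same, and your plan to verify the closed formula directly is exactly what the paper's construction amounts to.

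There is, however, one genuine gap in your uniqueness argument. You assert the Ehrenfest relation $\frac{d}{dt}P[u]=\eta X[u]$ for an arbitrary $C({\mathbb{R}};\Sigma^{1,1})$ solution as if it were immediate from ``differentiating and using the equation.'' At this regularity it is not: from \eqref{eq:2H} one only gets $\partial_t u\in C({\mathbb{R}};(\Sigma^{1,1})')$, and the formal computation of $\frac{d}{dt}P[u]$ pairs $\partial_t u$ against $\nabla u$ (equivalently $\xi\hat u$), which does not lie in $\Sigma^{1,1}$ in general, so the pairing is not a priori well-defined. The antisymmetry argument you invoke is correct at the level of smooth solutions but needs justification here. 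The paper isolates this as a separate Proposition (\ref{prop:motion}) and proves it \emph{indirectly}: it first removes the second moment by a phase, then introduces an auxiliary $H(t)$ solving $H''=\omega H-\zeta X[u]$ and uses the Galilean shift $\pi_{H'}\tau_H$ to map the solution to the \emph{free} oscillator $i\partial_t w+\tfrac12\Delta w+\tfrac\omega2|x|^2w=0$, for which $X[w(t)]=Mg_\omega(t)$ is explicit; pulling back gives $X[u(t)]=Mg_\omega(t)+MH(t)\in C^3$, and \emph{then} one can differentiate twice to recover $\frac{d}{dt}P[u]=\eta X[u]$. In other words, the paper avoids differentiating $P[u]$ directly and instead reads off the answer from the explicit linear dynamics. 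Your proposal should either incorporate this step or supply an alternative regularization (e.g.\ approximate by $\Sigma^{2,2}$ data and pass to the limit) to close the uniqueness argument.
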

\begin{remark}
Thanks to Proposition \ref{prop:translation} below,
Theorem \ref{thm:main2} immediately follows by taking
$\eta=0$ and $\zeta=\pm1$ (and so $\omega=\pm M$).
\end{remark}
\begin{remark}
In general, momentum of a solution is not conserved in the presence of a linear potential.
Indeed, the solution of \eqref{eq:2H} given in this theorem
satisfies $P[u(t)]=M g^\prime_\eta (t)$.
This is not conserved unless $\eta=0$ (or $u_0$ satisfies ${\bf a}={\bf b}=0$).
\end{remark}
\begin{remark}
Up to a translation in both physical and Fourier spaces and a
nonlinear phase, the solution behaves as $\mathcal{U}_\omega(t)u_0$.
In particular, we have $\Lebn{u(t)}p=\Lebn{\mathcal{U}_\omega(t)u_0}p$ for all $p$.
It is worth pointing out that
not $\eta$ but the exponent $\omega = \eta + \zeta M$ decides the linear profile
of the solution.
This means that, from the view point of change of the dispersive property,
the nonlinearity has the same effect as by the linear potential.
Recall that, however, the motion of the center of mass $X[u(t)]$ is 
governed only by the linear potential.
An interesting case would be $\eta\zeta<0$.
In this case, there exists a critical mass $M_c = -\eta/\zeta$
such that the sign of $\omega$ changes at this value.
If $\Lebn{u_0}2^2=M_c$ then 
the effect of the linear potential is partially removed by the nonlinearity so that
the solution of \eqref{eq:2H} is
a solution of the free Schr\"odinger equation $e^{i\frac{\Delta}2t}u_0$
up to a translation and a nonlinear phase.
\end{remark}
\begin{proof}
Let us first consider the equation
\begin{equation}\label{eq:aux4eta}
	i \partial_t w + \frac12 \Delta w + \frac{\omega}2 |x|^2 w= 0,
	\quad w(0) 
=\pi_{-{\bf a}} \tau_{-{\bf b}} u_0.
\end{equation}
Obviously, a solution is given by $w(t)=\mathcal{U}_{\omega}(t)\pi_{-{\bf a}} \tau_{-{\bf b}} u_0$.
One verifies that $\frac{d}{dt}X[w(t)] = P[w(t)]$ and
\[
	\frac{d}{dt}P[w(t)] = \omega X[w(t)].
\]
Since $X[w(0)] = \int (y-{\bf b})|u_0|^2 dy =0$
and $P[w(0)]=P[u_0]-M {\bf a}=0$ hold,
it follows that $X[w(t)] \equiv 0$.
Set
\begin{equation}\label{def:iw}
	\widetilde{u}(t,x) = w(t,x) \exp \left( 
	i\frac\zeta2 \int_0^t \int_{{\mathbb{R}}^d} |y|^2|w(s,y)|^2 dy ds
	\right).
\end{equation}
Now, it is easy to see that $\widetilde{u}$ solves
\begin{equation}\label{eq:aux3eta}
	i \partial_t \widetilde{u} + \frac12 \Delta \widetilde{u}
	+ \frac\omega2 |x|^2 \widetilde{u}
	= -\frac\zeta2 \int_{{\mathbb{R}}^d} |y|^2|\widetilde{u}(y)|^2 dy \widetilde{u}
\end{equation}
and $\widetilde{u}(0)=w(0)=\pi_{-{\bf a}} \tau_{-{\bf b}} u_0$.
Since $|\widetilde{u}(t,x)|=|w(t,x)|$, it also holds that 
$X[\widetilde{u}(t)]\equiv X[w(t)]\equiv0$.
We introduce $g_\eta(t)$ by \eqref{def:gomega} with 
the data ${\bf a}$ and ${\bf b}$ given in \eqref{def:ab}.
Recall that $g_\eta^{\prime\prime} = \eta g_\eta(t)$.
Hence, 
\begin{multline}\label{eq:aux2eta}
	i \partial_t \widetilde{u} + \frac12 \Delta \widetilde{u}
	+ \frac\omega2 |x|^2 \widetilde{u} +(\eta g_\eta(t)-g_\eta^{\prime\prime}(t))\cdot x \widetilde{u} \\
	= \zeta x \cdot X[\widetilde{u}] \widetilde{u}
	-\frac\zeta2 \int_{{\mathbb{R}}^d} |y|^2|\widetilde{u}(y)|^2 dy \widetilde{u}.
\end{multline}
Since $\omega = \eta + \zeta M = \eta + \zeta \Lebn{\widetilde{u}(t)}2^2$,
this equation is equivalent to
\[
	i \partial_t \widetilde{u} + \frac12 \Delta \widetilde{u}
	+ \frac\eta2 |x+g_\eta(t)|^2 \widetilde{u}
	-g_\eta^{\prime\prime}(t)\cdot x \widetilde{u} -\frac{\eta}2 \abs{g_\eta(t)}^2 \widetilde{u}
	= -\frac\zeta2 \widetilde{u} \int |x-y|^2 |\widetilde{u}(y)|^2 dy .
\]
Hence, if we define $u$ by
\begin{equation}\label{def:iut}
	{u}(t,x) = e^{\frac{i}2\int_0^t (|g^\prime_\eta(s)|^2 + \eta |g_\eta(s)|^2 )ds}
	( \tau_{g_\eta(t)}\pi_{g^\prime_\eta(t)} \widetilde{u})(x),
\end{equation}
then $u$ solves \eqref{eq:2H}.
It also holds that $u(0)=\tau_{{\bf b}}\pi_{{\bf a}} \widetilde{u}(0)=
\tau_{{\bf b}}\pi_{{\bf a}} \pi_{-{\bf a}} \tau_{-{\bf b}} u_0=u_0$.
Combining \eqref{def:iut} and \eqref{def:iw}, we conclude that
\begin{align}
	u(t,x)={}&
	e^{i \frac{1}{2}\int_0^t (|g^\prime_\eta(s)|^2 +\eta |g_\eta(s)|^2)ds}
	( \tau_{g_\eta(t)} \pi_{g^\prime_\eta(t)} \widetilde{u})(x)\nonumber\\
	={}&
	e^{i \widetilde{\Psi}_{\eta,\zeta}(t)}
	( \tau_{g_\eta(t)} \pi_{g^\prime_\eta(t)}
	\mathcal{U}_\omega(t)\pi_{-{\bf a}} \tau_{-{\bf b}} u_0)(x), \label{eq:explicitsol}
\end{align}
where
\begin{align*}
	\widetilde{\Psi}_{\eta,\zeta}(t)={}& \frac{1}2\int_0^t (|g^\prime_\eta(s)^2| +\eta |g_\eta(s)|^2) ds + \frac\zeta2 \int_0^t \int_{{\mathbb{R}}^d} |y|^2|w(s,y)|^2 dy ds\\
	={}& \frac12(g_\eta(t)\cdot g_\eta^\prime(t) - {\bf a}\cdot{\bf b})+ \frac\zeta2 \int_0^t \int_{{\mathbb{R}}^d} |y|^2|w(s,y)|^2 dy ds
\end{align*}
Then, Propositions \ref{prop:translation} and \ref{prop:phaseformula} show the stated
representation of the solution.

Now, let us proceed to the proof of the uniqueness.
Suppose that $u_1\in C({\mathbb{R}};\Sigma^{1,1})$ is a solution of \eqref{eq:2H}
in $(\Sigma^{1,1})^\prime$ sense.
By the equation, we see that $u_1 \in C^1 ({\mathbb{R}};(\Sigma^{1,1})^\prime)$ and so that
$\Lebn{u_1}2$ is conserved and
$\frac{d}{dt} X[u_1(t)] = P[u_1(t)] \in C({\mathbb{R}})$ holds.
By Proposition \ref{prop:motion} below, we obtain $P[u_1(t)] \in C^1({\mathbb{R}})$ and $\frac{d}{dt}P[u_1(t)]=\eta X[u_1(t)]$.
Then, $X[u_1(t)] = M g_\eta(t)$.
Let us introduce
\begin{equation*}
	\widetilde{u}_1(t,x) = 
	e^{-\frac{i}2\int_0^t (|g^\prime_\eta(s)|^2+\eta|g_\eta(s)|^2)ds}
	(\pi_{-g^\prime_\eta(t)} \tau_{-g_\eta(t)}u_1)(x) ,
\end{equation*}
which is the inverse transform of \eqref{def:iut}.
Then, $\widetilde{u}_1$ solves \eqref{eq:aux2eta}. Since
\[
	X[\widetilde{u}_1(t)] = \int (y-g_\eta(t))|u_1(t,y)|^2 dy
=X[u_1(t)] - M g_\eta(t)=0,
\]
$\widetilde{u}$ is also a solution to \eqref{eq:aux3eta}.
Now, let us further introduce
\begin{equation*}
	w_1(t,x) = \widetilde{u}_1(t,x) \exp \left( 
	-i\frac\zeta2 \int_0^t \int_{{\mathbb{R}}^d} |y|^2|\widetilde{u}_1(s,y)|^2 dy ds
	\right).
\end{equation*}
This is the inverse transform of \eqref{def:iw} since
$|w_1(t,x)|=|\widetilde{u}_1(t,x)|$.
Then, $w_1$ solves \eqref{eq:aux4eta} and so $w_1(t,x)=w(t,x)$.
Applying \eqref{def:iut} and \eqref{def:iw}, we conclude that
$u_1$ is identical to $u$. 
\end{proof}

\begin{proposition}\label{prop:translation}
Let $\kappa\in {\mathbb{R}}$ and ${\bf a}$, ${\bf b} \in {\mathbb{R}}^d$.
Define $g_\kappa(t)$ by \eqref{def:gomega}.
Then, it holds for all $t\in {\mathbb{R}}$ that
\[
	\mathcal{U}_{\kappa}(t) \pi_{-{\bf a}} \tau_{-{\bf b}}
	= 
	e^{-\frac{i}2(g_\kappa(t)\cdot g_\kappa^\prime(t) - {\bf a}\cdot{\bf b})}
	\pi_{-g_\kappa^\prime(t)} \tau_{-g_\kappa(t)} \mathcal{U}_{\kappa}(t).
\]
\end{proposition}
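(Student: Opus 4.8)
The plan is to establish the operator identity by applying both sides to an arbitrary $f$ (first in the Schwartz class, then extended by density and continuity to all of $\Sigma^{1,1}$), showing that the two resulting time-dependent functions solve the same linear Schr\"odinger equation with harmonic potential and share the same initial datum; uniqueness of the flow $\mathcal{U}_\kappa$ then forces them to coincide for all $t$. Concretely, set $\phi(t)=\mathcal{U}_\kappa(t)f$, so that $i\partial_t\phi+\tfrac12\Delta\phi+\tfrac\kappa2|x|^2\phi=0$ with $\phi(0)=f$. The left-hand side applied to $f$ is $v(t)=\mathcal{U}_\kappa(t)\pi_{-{\bf a}}\tau_{-{\bf b}}f$, which solves the same equation with datum $\pi_{-{\bf a}}\tau_{-{\bf b}}f$. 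Writing $g=g_\kappa(t)$ and $\theta(t)=-\tfrac12(g\cdot g'-{\bf a}\cdot{\bf b})$, the right-hand side applied to $f$ is $\widetilde v(t,x)=e^{i\theta(t)}(\pi_{-g'(t)}\tau_{-g(t)}\phi)(t,x)=e^{i\theta(t)}e^{-ix\cdot g'(t)}\phi(t,x+g(t))$. Since $g(0)={\bf b}$ and $g'(0)={\bf a}$, one has $\theta(0)=0$ and $\widetilde v(0)=\pi_{-{\bf a}}\tau_{-{\bf b}}f=v(0)$, so the initial data agree.

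The heart of the argument is to verify that $\widetilde v$ solves $i\partial_t\widetilde v+\tfrac12\Delta\widetilde v+\tfrac\kappa2|x|^2\widetilde v=0$. I would differentiate the product $e^{i\theta}e^{-ix\cdot g'}\phi(\cdot+g)$ in $t$ and in $x$ and substitute into the equation, expecting three cancellations, each isolating one structural feature of the statement. First, the first-order transport contributions $\pm i\,g'\cdot\nabla\phi$, arising respectively from the chain rule in $\partial_t\phi(\cdot+g)$ and from differentiating the modulation $e^{-ix\cdot g'}$ against $\nabla\phi$ in $\tfrac12\Delta$, cancel automatically. Second, after replacing $i\partial_t\phi+\tfrac12\Delta\phi$ by $-\tfrac\kappa2|x+g|^2\phi$, the terms linear in $x$ collect into $(x\cdot g''-\kappa\,x\cdot g)\phi$, which vanishes precisely because $g$ solves $g''=\kappa g$. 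Third, the remaining spatially constant terms sum to $\big(-\theta'-\tfrac12|g'|^2-\tfrac\kappa2|g|^2\big)\phi$, and this vanishes because the chosen phase satisfies $\theta'(t)=-\tfrac12(|g'|^2+\kappa|g|^2)$, which is exactly what differentiating $\theta=-\tfrac12(g\cdot g'-{\bf a}\cdot{\bf b})$ yields, once more using $g''=\kappa g$. With all three cancellations in hand, $\widetilde v$ solves the harmonic Schr\"odinger equation, and equality with $v$ follows by uniqueness.

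Conceptually this identity is the symplectic covariance of the harmonic propagator: $(g,g')$ is the classical phase-space trajectory of the Hamiltonian $\tfrac12|\xi|^2-\tfrac\kappa2|x|^2$ starting at $({\bf b},{\bf a})$, so conjugating a translation--modulation by $\mathcal{U}_\kappa(t)$ transports its position and frequency parameters along this flow, and $\theta(t)=-\int_0^t(\tfrac12|g'|^2+\tfrac\kappa2|g|^2)\,ds$ is the corresponding action-type phase. The only genuine difficulty is the bookkeeping in the second paragraph: one must track the generated phase carefully and confirm that the constant-in-$x$ residue matches $\theta'$ exactly, with no leftover term. For this reason I would carry out the differentiation first for Schwartz $f$, where every step is rigorously justified, and only afterwards pass to general $f\in\Sigma^{1,1}$ using density together with the continuity of $\mathcal{U}_\kappa(t)$, $\pi_{-g'(t)}$, and $\tau_{-g(t)}$ on $\Sigma^{1,1}$.
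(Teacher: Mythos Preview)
Your proposal is correct and follows essentially the same route as the paper: apply both sides to a test function, verify that the transformed expression solves the linear harmonic Schr\"odinger equation with the right initial datum (the computation hinging on $g_\kappa''=\kappa g_\kappa$ and the identity $\theta'=-\tfrac12(|g_\kappa'|^2+\kappa|g_\kappa|^2)$), and conclude by uniqueness of $\mathcal{U}_\kappa$. The paper's proof is the same argument run in the inverse direction---it starts from $w=\mathcal{U}_\kappa(t)\pi_{-{\bf a}}\tau_{-{\bf b}}\varphi$ and shows that $e^{\frac{i}{2}(g_\kappa\cdot g_\kappa'-{\bf a}\cdot{\bf b})}\tau_{g_\kappa}\pi_{g_\kappa'}w$ equals $\mathcal{U}_\kappa(t)\varphi$---but the substantive verification is identical to yours.
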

\begin{proof}
Fix $\varphi \in \Sigma^{1,1}$.
Let us set
$w(t)=\mathcal{U}_{\kappa}(t) \pi_{-{\bf a}} \tau_{-{\bf b}} \varphi$.
Then,
\[
	i\partial_t w + \frac12 \Delta w + \frac\kappa2 |x|^2 w = 0,\quad
	w(0) = \pi_{-\bf a} \tau_{-\bf b} \varphi.
\]
Now, we introduce
$
	\widetilde{w}(t,x) = e^{\frac{i}2(g_\kappa(t)\cdot g_\kappa^\prime(t) - {\bf a}\cdot{\bf b})}
	[\tau_{g_\kappa(t)} \pi_{g_\kappa^\prime(t)}  w(t)](x).
$
One easily verifies that
$i\partial_t \widetilde{w} + \frac12 \Delta \widetilde{w} + \frac\kappa2 |x|^2 \widetilde{w} = 0$
and $\widetilde{w}(0) = \varphi$ hold,
which implies $\widetilde{w}(t)=\mathcal{U}_\kappa(t) \varphi$.
Hence,
\[
	\mathcal{U}_{\kappa}(t) \varphi
	= e^{\frac{i}2(g_\kappa(t)\cdot g_\kappa^\prime(t) - {\bf a}\cdot{\bf b})}
	\tau_{g_\kappa(t)} \pi_{g_\kappa^\prime(t)} \mathcal{U}_{\kappa}(t) \pi_{-\bf a} \tau_{-\bf b} \varphi
\]
is valid for arbitrary $\varphi\in \Sigma^{1,1}$. 
Alternatively, let $\zeta=0$ in \eqref{eq:explicitsol}.
\end{proof}

\begin{proposition}\label{prop:phaseformula}
Let $w(t)=\mathcal{U}_\omega (t)\pi_{-\bf a} \tau_{-\bf b} u_0$.
Define $\psi_\omega(t)$ as in Theorem \ref{thm:harmonic}.
Then,
\[
	\int_0^t \int_{{\mathbb{R}}^d} |y|^2 |w(t)|^2 dy ds =
	\psi_\omega(t) -M \int_0^t |g_\omega(s)|^2 ds .
\]
\end{proposition}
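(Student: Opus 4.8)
The plan is to replace $w$ by the bare propagation $\phi(t):=\mathcal{U}_\omega(t)u_0$ and to reduce the identity to a virial computation of the time-integrated second moment of $\phi$. By Proposition~\ref{prop:translation} with $\kappa=\omega$ one has
\[
	w(t)=e^{-\frac{i}2(g_\omega(t)\cdot g_\omega'(t)-{\bf a}\cdot{\bf b})}\,\pi_{-g_\omega'(t)}\tau_{-g_\omega(t)}\phi(t),
\]
so that $|w(t,x)|^2=|\phi(t,x+g_\omega(t))|^2$; that is, $|w(t)|^2$ is a rigid translate of $|\phi(t)|^2$ by $-g_\omega(t)$. Since $\phi$ solves $i\partial_t\phi+\tfrac12\Delta\phi+\tfrac\omega2|x|^2\phi=0$, the same computation as for $w$ in the proof of Theorem~\ref{thm:harmonic} gives $\tfrac{d}{dt}X[\phi]=P[\phi]$ and $\tfrac{d}{dt}P[\phi]=\omega X[\phi]$; as $X[\phi(0)]=X[u_0]=M{\bf b}$ and $P[\phi(0)]=P[u_0]=M{\bf a}$, while $g_\omega''=\omega g_\omega$ with $g_\omega(0)={\bf b}$, $g_\omega'(0)={\bf a}$, uniqueness for this linear ODE yields $X[\phi(t)]=Mg_\omega(t)$. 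Setting $\widetilde{A}(t):=\int_{{\mathbb{R}}^d}|x|^2|\phi(t,x)|^2dx$ and translating, the parallel-axis decomposition gives
\[
	\int_{{\mathbb{R}}^d}|y|^2|w(t,y)|^2dy=\widetilde{A}(t)-2g_\omega(t)\cdot X[\phi(t)]+M|g_\omega(t)|^2=\widetilde{A}(t)-M|g_\omega(t)|^2.
\]
Integrating in time, the claim reduces to the single identity $\int_0^t\widetilde{A}(s)\,ds=\psi_\omega(t)$.

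To evaluate $\widetilde{A}$ I would set up the standard virial system. Writing $\widetilde{B}(t):=\operatorname{Im}\int\overline{\phi}\,x\cdot\nabla\phi\,dx$ and $\widetilde{C}(t):=\Lebn{\nabla\phi}2^2$ and differentiating under the integral with the help of the equation (using $\operatorname{Im}(\overline\phi\Delta\phi)=\nabla\cdot\operatorname{Im}(\overline\phi\nabla\phi)$ and one integration by parts for $\widetilde{A}'$, together with the analogous manipulations for the remaining two), one obtains the closed system
\[
	\widetilde{A}'=2\widetilde{B},\qquad \widetilde{B}'=\widetilde{C}+\omega\widetilde{A},\qquad \widetilde{C}'=2\omega\widetilde{B}.
\]
The structural point is that $\tfrac{d}{dt}(\widetilde{C}-\omega\widetilde{A})=2\omega\widetilde{B}-\omega\cdot2\widetilde{B}=0$, so $\widetilde{C}-\omega\widetilde{A}\equiv \widetilde{C}(0)-\omega\widetilde{A}(0)=c-\omega e$. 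Eliminating $\widetilde{B}$ and $\widetilde{C}$ leaves the scalar linear ODE
\[
	\widetilde{A}''-4\omega\widetilde{A}=2(c-\omega e),\qquad \widetilde{A}(0)=e,\quad \widetilde{A}'(0)=2\widetilde{B}(0)=2d,
\]
with the initial values read off from \eqref{eq:cde}.

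It then remains to solve this ODE and integrate once from $0$. I would split into the three cases according to the sign of $\omega$, matching the trichotomy in the definition of $\psi_\omega$: for $\omega>0$ the homogeneous solutions are $\cosh(2\sqrt\omega t),\sinh(2\sqrt\omega t)$ with particular solution $(\omega e-c)/2\omega$; for $\omega=0$ simply $\widetilde{A}(t)=ct^2+2dt+e$; and for $\omega<0$ the solutions are $\cos(2\sqrt{|\omega|}t),\sin(2\sqrt{|\omega|}t)$. Imposing the two initial conditions, integrating, and reducing the double-angle expressions via $\sinh2\theta=2\sinh\theta\cosh\theta$, $\cosh2\theta-1=2\sinh^2\theta$ (and the circular analogues), one recovers term by term the three formulas for $\psi_\omega$ in Theorem~\ref{thm:harmonic}; for example the coefficient of $\sinh(\sqrt\omega t)\cosh(\sqrt\omega t)$ becomes $(c+\omega e)/2\omega^{3/2}$ and the linear-in-$t$ term becomes $-\tfrac{c-\omega e}{2\omega}t$. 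Since $\psi_\omega(0)=0$ in each case, this gives $\int_0^t\widetilde{A}(s)\,ds=\psi_\omega(t)$ and hence the proposition.

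There is no conceptual obstacle here; the real work is bookkeeping. The two steps that need care are the identification $\int|y|^2|w|^2=\widetilde{A}-M|g_\omega|^2$, which hinges on combining Proposition~\ref{prop:translation} with the center-of-mass law $X[\phi(t)]=Mg_\omega(t)$, and the case-by-case matching of the integrated ODE against the precise closed forms of $\psi_\omega$, where the sign conventions ($\omega$ versus $|\omega|$) and the double-angle reductions must be tracked carefully to land on exactly the stated coefficients.
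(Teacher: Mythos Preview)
Your argument is correct. The reduction step---using Proposition~\ref{prop:translation} together with $X[\mathcal{U}_\omega(t)u_0]=Mg_\omega(t)$ to obtain $\int|y|^2|w(t)|^2\,dy=\Lebn{x\,\mathcal{U}_\omega(t)u_0}2^2-M|g_\omega(t)|^2$---is exactly what the paper does. The two proofs diverge only in how they compute $\Lebn{x\,\mathcal{U}_\omega(t)u_0}2^2$: the paper invokes the Mehler factorization $\mathcal{U}_\omega(t)=\mathcal{M}_\omega(t)\mathcal{D}_\omega(t)\mathcal{F}\mathcal{M}_\omega(t)$ and reads off the second moment directly as $\frac{c}{\omega}\sinh^2(\sqrt\omega t)+\frac{2d}{\sqrt\omega}\sinh(\sqrt\omega t)\cosh(\sqrt\omega t)+e\cosh^2(\sqrt\omega t)$, then integrates. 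Your route via the closed virial system $\widetilde{A}'=2\widetilde{B}$, $\widetilde{B}'=\widetilde{C}+\omega\widetilde{A}$, $\widetilde{C}'=2\omega\widetilde{B}$ and the conserved quantity $\widetilde{C}-\omega\widetilde{A}$ lands on the same expression (after the double-angle reduction) and is arguably more self-contained, since it avoids quoting the explicit propagator decomposition. The paper's approach is shorter once Mehler's formula is granted; yours is more elementary and would transfer more readily to perturbations where an explicit kernel is unavailable.
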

\begin{proof}
By the previous proposition, we obtain
\[
	\Lebn{xw(t)}2^2 = \Lebn{x\tau_{-g_\omega(t)} \mathcal{U}_\omega(t)u_0}2^2
	=\Lebn{x\mathcal{U}_\omega(t)u_0}2^2 - M|g_\omega(t)|^2,
\]
where we have used $X[\mathcal{U}_\omega(t)u_0] = M g_\omega(t)$.
It therefore suffices to show that $\psi_\omega(t)=\int_0^t \Lebn{x\mathcal{U}_\omega(s)u_0}2^2ds$.

Assume $\omega>0$.
It is well known that $\mathcal{U}_{\omega}(t)$ is decomposed as
$\mathcal{U}_{\omega}(t)= \mathcal{M}_{\omega}(t)\mathcal{D}_{\omega}(t)
\mathcal{F} \mathcal{M}_{\omega}(t)$,
where $\mathcal{M}_{\omega}(t)$ is a multiplication operator defined by
\[
	\mathcal{M}_{\omega}(t)=  \exp\left(i\frac{\sqrt\omega}2 \coth(\sqrt\omega t) |x|^2\right)
\]
and $\mathcal{D}_{\omega}(t)$ is a dilation operator defined by
\[
	(\mathcal{D}_{\omega}(t)f)(x)=  \left( \frac{\sqrt\omega}{i \sinh (\sqrt\omega t)} \right)^{\frac{n}2}f\left(\frac{\sqrt\omega x}{\sinh(\sqrt\omega t)}\right).
\]
Hence, 
\begin{align*}
	\Lebn{x\mathcal{U}_\omega(t)u_0}2^2
	={}& \left(\frac{\sinh (\sqrt\omega t)}{\sqrt\omega}\right)^2 \Lebn{x\mathcal{F}\mathcal{M}_{\omega}(t) u_0}2^2 \\
	={}&\left(\frac{\sinh (\sqrt\omega t)}{\sqrt\omega}\right)^2 \Lebn{\nabla\left(\mathcal{M}_{\omega}(t) u_0\right)}2^2 \\
	={}& \left(\frac{\sinh (\sqrt\omega t)}{\sqrt\omega}\right)^2 \Lebn{\nabla u_0+i\sqrt\omega\coth(\sqrt\omega t)x u_0 }2^2 \\
	={}&  \frac{c}{\omega} \sinh^2(\sqrt\omega t)
	+ \frac{2 d}{\sqrt\omega} \sinh(\sqrt\omega t) \cosh(\sqrt\omega t) + e \cosh^2(\sqrt\omega t)
\end{align*}
and so
\begin{align*}
	\psi_\omega(t)
	= \left(\frac{c+e\omega}{2\omega^{3/2}}\right)\sinh(\sqrt\omega t) \cosh(\sqrt\omega t)
	+\frac{d}{\omega }\sinh^2(\sqrt\omega t) 
	-\left(\frac{c-e\omega }{2\omega }\right)t,
\end{align*}
where $c$, $d$, and $e$ are constants defined in \eqref{eq:cde}.
The proof for $\omega\leqslant 0$ is similar.
We omit details.
\end{proof}

\begin{proposition}\label{prop:motion}
Let $d\geqslant1$ and $\eta,\zeta\in{\mathbb{R}}$. Let $u_0\in \Sigma^{1,1}$.
Let $u\in C({\mathbb{R}}; \Sigma^{1,1})$ solve \eqref{eq:2H} in $(\Sigma^{1,1})^\prime$ sense.
Then, $P[u(t)]$ is a continuously differentiable function of time
and $\frac{d}{dt}P[u(t)]=\eta X[u(t)]$ holds.
\end{proposition}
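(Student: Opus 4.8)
The statement is an Ehrenfest-type momentum balance, so the plan is to first derive it by a formal computation and then to justify every manipulation at the regularity $u\in C(\mathbb{R};\Sigma^{1,1})$ by a spatial regularization. First I would record the regularity that is actually available. Writing \eqref{eq:2H} as $\partial_t u = i(\frac12\Delta u+\frac\eta2|x|^2 u+\frac\zeta2 Vu)$ with $V:=|x|^2*|u|^2$, each term on the right defines a bounded map $\Sigma^{1,1}\to(\Sigma^{1,1})'$: one has $\Delta u\in H^{-1}$, while $|\langle |x|^2 u,\phi\rangle|\le\Lebn{\Jbr{x}u}2\Lebn{\Jbr{x}\phi}2$ because $|x|^2\le\Jbr{x}^2$, and $0\le V(x)\le 2\Lebn{u}2^2|x|^2+2\Lebn{|\cdot|u}2^2\le C\Jbr{x}^2$ (expand $|x-y|^2=|x|^2-2x\cdot y+|y|^2$ and use $u\in\Sigma^{0,1}$), so the same weighted pairing controls $Vu$. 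Hence $\partial_t u\in C(\mathbb{R};(\Sigma^{1,1})')$ and $u\in C^1(\mathbb{R};(\Sigma^{1,1})')$. I also note that $t\mapsto X[u(t)]$ is continuous, since $X$ is a continuous quadratic functional on $\Sigma^{0,1/2}\supset\Sigma^{1,1}$.

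Next I carry out the balance law formally. For a smooth, rapidly decaying $u$, differentiating the $j$-th component $P_j[u]=\operatorname{Im}\int\overline{u}\,\partial_j u\,dx$ and integrating by parts once in $x_j$ gives $\frac{d}{dt}P_j[u]=2\operatorname{Im}\int\overline{\partial_t u}\,\partial_j u\,dx$. Substituting $\overline{\partial_t u}=-i(\frac12\Delta\overline u+\frac\eta2|x|^2\overline u+\frac\zeta2 V\overline u)$ and using $\operatorname{Im}(-iz)=-\operatorname{Re} z$ gives $\frac{d}{dt}P_j[u]=-2\operatorname{Re}\int(\frac12\Delta\overline u+\frac\eta2|x|^2\overline u+\frac\zeta2 V\overline u)\partial_j u\,dx$. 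The kinetic contribution vanishes, since $\operatorname{Re}\int\Delta\overline u\,\partial_j u=-\operatorname{Re}\int\nabla\overline u\cdot\nabla\partial_j u=-\frac12\int\partial_j|\nabla u|^2=0$. For the other two I use $\operatorname{Re}(\overline u\,\partial_j u)=\frac12\partial_j|u|^2$: the harmonic term yields $-\frac\eta2\int|x|^2\partial_j|u|^2=\frac\eta2\int\partial_j(|x|^2)|u|^2=\eta\int x_j|u|^2=\eta X_j[u]$, while the Hartree term yields $\frac\zeta2\int(\partial_j V)|u|^2=\zeta\iint(x_j-y_j)|u(x)|^2|u(y)|^2\,dx\,dy=0$ by antisymmetry under $x\leftrightarrow y$. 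The vanishing of the nonlinear term is exactly Newton's third law for the self-interaction, and is why $\zeta$ does not appear in the final identity.

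Finally I would make this rigorous, which is the main obstacle: since $\Delta u$, $|x|^2 u$ and $Vu$ lie only in $(\Sigma^{1,1})'$, the pointwise products and integrations by parts above are a priori meaningless, and $\partial_j u\in L^2$ is not an admissible test element of $\Sigma^{1,1}$. I would regularize by the spectral projections $P_N$ of the harmonic oscillator $\mathcal H=-\Delta+|x|^2$ onto its first $N$ Hermite modes; since $\Sigma^{1,1}=D(\mathcal H^{1/2})$ with equivalent norm, $u^{(N)}:=P_N u(t)$ is Schwartz and $u^{(N)}\to u(t)$ in $\Sigma^{1,1}$, so the formal manipulations above are legitimate for $u^{(N)}$. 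Working with the integrated identity and pairing the weak equation against multipliers built from $u^{(N)}$, I obtain for each $N$ an exact relation whose defect is a sum of commutators of $P_N$ with $\partial_j$, with $|x|^2$, and with convolution by $|x|^2$; these are estimated in $L^2$ and weighted-$L^2$ norms and tend to $0$ as $N\to\infty$ by virtue of $u\in\Sigma^{1,1}$ and the quadratic bound $V\le C\Jbr{x}^2$. Letting $N\to\infty$ yields $P[u(t_2)]-P[u(t_1)]=\eta\int_{t_1}^{t_2}X[u(s)]\,ds$ for all $t_1,t_2$, and the continuity of $s\mapsto X[u(s)]$ together with the fundamental theorem of calculus upgrades this to $P[u(\cdot)]\in C^1$ with $\frac{d}{dt}P[u]=\eta X[u]$. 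The delicate point throughout is tracking the weight $\Jbr{x}$ and the quadratic growth of $V$ when commuting $P_N$ past $|x|^2$ and past $|x|^2*(\cdot)$, so that the commutator defects really do vanish in the limit.
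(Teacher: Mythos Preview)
Your formal Ehrenfest computation is correct, and the Hermite-projection regularization is a reasonable plan; however, the paper takes a completely different route. Rather than differentiating $P[u]$ directly, the paper first records $\frac{d}{dt}X[u]=P[u]$ and then \emph{linearizes} the equation by a chain of gauge and translation transforms: multiplying $u$ by $\exp\bigl(-i\frac{\zeta}{2}\int_0^t\Lebn{|y|u(s)}2^2\,ds\bigr)$ and then setting $v=\pi_{-H'(t)}\tau_{-H(t)}w$ with $H''=\omega H-\zeta X[u]$ (here $\omega=\eta+\zeta M$) reduces \eqref{eq:2H} to the free harmonic oscillator $i\partial_t w+\frac12\Delta w+\frac{\omega}{2}|x|^2w=0$, for which $X[w(t)]=Mg_\omega(t)$ is explicit and smooth. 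Tracing back yields $X[u(t)]=Mg_\omega(t)+MH(t)\in C^3$, so $P[u]=\frac{d}{dt}X[u]\in C^2$, and the identity $\frac{d}{dt}P[u]=\eta X[u]$ drops out of the ODEs $g_\omega''=\omega g_\omega$ and $H''=\omega H-\zeta X[u]$ by pure algebra.

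The tradeoff: the paper's argument is tailored to the quadratic structure of both the linear potential and the convolution kernel and never has to pair $(\Sigma^{1,1})'$ elements against $\partial_j u$; it needs only the easier identity $\frac{d}{dt}X=P$ as input. Your approach is more robust and would apply to more general kernels, but the commutator control you sketch is genuinely delicate---note that $|x|^2u$ and $Vu$ lie only in $(\Sigma^{1,1})'$, not in $L^2$, so the commutators $[P_N,|x|^2]u$ and $[P_N,V]u$ must be interpreted via duality and the claimed $L^2$/weighted-$L^2$ bounds require more work than you indicate. That step can be closed, but it is the real content of your proof and should be written out rather than asserted.
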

\begin{proof}
Since $u\in C({\mathbb{R}};\Sigma^{1,1})$, we see that $X[u(t)] \in C({\mathbb{R}})$.
By \eqref{eq:2H}, $\Lebn{u(t)}2=\Lebn{u_0}2$
and $\frac{d}{dt}X[u(t)]=P[u(t)] \in C({\mathbb{R}})$ hold.
Set
\[
	v(t,x)= u(t,x) \exp\left( -i\frac{\zeta}{2}\int_0^t \int_{{\mathbb{R}}^d}|y|^2|u(t,s)|^2 dy\, ds \right).
\]
We have $|v(t,x)|=|u(t,x)|$ and so $X[v(t)]=X[u(t)]$
and $\Lebn{v(t)}2=\Lebn{u(t)}2=\Lebn{u_0}2$.
Hence, one sees that $v$ is a solution to
\[
	i\partial_t v + \frac12 \Delta v + \frac\omega2 |x|^2 v = \lambda X[v(t)] \cdot x v, \quad
	v(0)=u_0,
\]
where $\omega=\eta+\zeta M$.
Let us introduce a function $H(t)\in C^3({\mathbb{R}})$ as follows:
\[
	H(t)= -\zeta \int_0^t \frac{e^{\sqrt\omega (t-s)}+ e^{-\sqrt\omega (t-s)}}{2}
	\int_0^s X[u(\sigma)] d\sigma\, ds ,
\]
where $\frac{e^{\sqrt\omega t}+ e^{-\sqrt\omega t}}{2} = \cos (\sqrt{|\omega|}t)$
if $\omega<0$.
Remark that $H$ is a solution of 
$H^{\prime\prime}(t)=\omega H(t) - \zeta X[u(t)]$ with $H(0)=H^\prime(0)=0$.
Further set
\[
	v(t,x) = \exp\left(-\frac{i}{2}\int_0^t (|H^\prime(s)|^2 + \omega |H(s)|^2) ds\right)
	[\pi_{H^\prime(t)} \tau_{H(t)} w(t)](x).
\]
Then,
$w$ solves
$	i\partial_t w + \frac12 \Delta w + \frac{\omega}{2}|x|^2 w 
	=0$ with $w(0) = u_0$, and so
 $X[w(t)] = M g_\omega (t) \in C^\infty({\mathbb{R}})$ follows.
Hence, we conclude that
\[
	X[u(t)]=X[v(t)] = \int_{{\mathbb{R}}^d} (y+H(t)) |w(t,y)|^2 dy = M g_\omega(t) + M H(t) \in C^3({\mathbb{R}}),
\]
which gives us $P[u(t)] = \frac{d}{dt}X[u(t)]\in C^2({\mathbb{R}})$. Moreover,
\begin{align*}
	\frac{d}{dt}P[u(t)]=\frac{d^2}{dt^2} X[u(t)] = {}&\frac{d^2}{dt^2} X[w(t)] + M H^{\prime\prime}(t) \\
	={}& M \omega g_\omega(t) + M (\omega H(t) - \zeta X[u(t)]) \\
	={}& \omega (M g_\omega(t)+M H(t)) - \zeta M X[u(t)] \\
	={}& (\omega - \zeta M) X[u(t)] \\
	={}& \eta X[u(t)].
\end{align*}
\end{proof}

\section{\texorpdfstring{Proof of Theorem \ref{thm:main1}}{Proof of Theorem 1.1}}
We shall prove Theorem \ref{thm:main1} in a general framework.
Consider a \emph{generalized Hartree equation}
\begin{equation}\label{eq:gH}\tag{$\mathrm{gH}$}
\left\{
\begin{aligned}
&i \partial_t u + \frac{1}2 \Delta u = - ((V+R)* |u|^2)u , \text{ in } {\mathbb{R}}^{1+d},\\
& u(0,x)=u_0,
\end{aligned}
\right.
\end{equation}
where $V$ and $R$ are  real-valued functions of $x\in{\mathbb{R}}^d$.

A pair $(q,r)$ is admissible if $2\leqslant q,r\leqslant \infty$ satisfy the relation
$2/q=\delta(r):=d(1/2-1/r) \in [0,1]$
(however, we exclude the case $(d,q,r)=(2,2,\infty)$).
For nonnegative numbers $p$, $q$ and $r$, define
\begin{equation*}
	W^{p,q}_{V,r} := \{ f \in L^r({\mathbb{R}}^d); 
	\Jbr{\nabla}^{p}f, \, \Jbr{V(\cdot)}^{q/2} f \in L^r({\mathbb{R}}^d) \}
\end{equation*}
with a norm
\[
	\norm{f}_{{W}^{p,q}_{V,r}} := \Lebn{\Jbr{\nabla}^{p} f}r + \Lebn{\Jbr{V(\cdot)}^{q/2} f}r.
\]
Let $\Sigma^{p,q}_V:=W^{p,q}_{V,2}$.

Take a vector-valued function $W$ and set
\begin{equation}\label{def:K}
	K(x,y)=V(x-y)- V(x) + y\cdot W(x) .
\end{equation}
The assumptions on the potential $V$ and $R$ are the following.
\begin{assumption}
Suppose that $V:{\mathbb{R}}^d\to{\mathbb{R}}$ is a smooth function
satisfying the following properties:
\begin{itemize}
\item[$(\rm{V1})$] 
	$\partial^\alpha V(x) \in L^\infty({\mathbb{R}}^d)$
	for all index $\alpha$ with $|\alpha|\geqslant 2$;
\item[$(\rm{V2})$] There exist constants $C>0$ and $\kappa\in[0,1)$ such that 
 $\abs{\nabla V(x)} \leqslant C\Jbr{V(x)}^{\kappa/2}$;
\item[$(\rm{V3})$] There eixsts a vector-valued function $W$ such that 
$\partial^\alpha W(x) \in L^\infty({\mathbb{R}}^d)$ for all $|\alpha|\geqslant 2$ and
$K$ defined by \eqref{def:K} satisfies
\begin{equation*}
	\sup_x\abs{ K(x,y)} + \sup_x\abs{ \nabla_x K(x,y)} \leqslant C \Jbr{V(y)};
\end{equation*}
\item[$(\rm{V4})$] There exists a constant $C>0$ such that $\Jbr{x} \leqslant C \Jbr{V(x)}$.
\end{itemize}
\end{assumption}
\begin{assumption}
	Assume that $R$ satisfies the following.
\begin{itemize}
	\item[$(\rm{R1})$] $R \in L^\zeta({\mathbb{R}}^d) + L^\infty({\mathbb{R}}^d)$, where
	$\zeta \in [1,\infty]$ and $\zeta>d/4$.
	\item[$(\rm{R2})$] $R^+:=\max(R,0)\in L^\theta({\mathbb{R}}^d) + L^\infty({\mathbb{R}}^d)$, where
	$\theta \in [1,\infty]$ and $\theta>d/2$.
\end{itemize}
\end{assumption}
\begin{remark}
\begin{enumerate}
\item Roughly speaking, $V$ and $R$ denote a divergent part and a remainder
of a (divergent) potential $V+R$, respectively.
When we prove Theorem \ref{thm:main1}, we choose
$V=\lambda |x|^\gamma \chi(x)$ and $R=\lambda|x|^\gamma (1-\chi(x))$, where
$\chi$ is a smooth radial function such that $\chi\equiv1$ for $|x|\geqslant2$
and $\chi\equiv0$ for $|x|\leqslant1$.
Notice that $\lambda|x|^\gamma$ itself does not satisfy $(\rm{V1})$.
\item $\Sigma^{1,1}_V \subset \Sigma^{1,1/2}$ as long as Assumption $(\rm{V4})$ is satisfied,
under which condition $X[u]$ is well-defined for $u \in \Sigma^{1,1}_V$.
\item If $(\rm{R1})$ is satisfied then $(R*|u|^2)u \in (H^1)^\prime$ for $u\in H^1$.
An example of $R$ satisfying $(\rm{R1})$ is the $H^1$-subcritical
Hartree-type nonlinearity; $\eta |x|^{-\nu}$ with $\eta\in{\mathbb{R}}$ and $0<\nu<\min(4,d)$.
\end{enumerate}
\end{remark}

The main result of this section is the following.
\begin{theorem}\label{thm:general}
Suppose Assumptions $(\rm{V1})$, $(\rm{V2})$, $(\rm{V3})$, $(\rm{V4})$, $(\rm{R1})$, and $(\rm{R2})$ are satisfied.
Then, \eqref{eq:gH} is globally well-posed in $\Sigma^{1,1}_V$.
More precisely,
for  $u_0\in \Sigma^{1,1}_{V}$, there exists a global solution $u$ of \eqref{eq:gH}
in $C({\mathbb{R}}; \Sigma^{1,1}_{V})\cap C^1({\mathbb{R}};(\Sigma^{1,1}_V)^\prime)$ which
satisfies
	$u \in L^q_{\mathrm{loc}}({\mathbb{R}};W^{1,1}_{V,r})$
for all admissible pair $(q,r)$ and
conserves the mass $\Lebn{u(t)}2$, the \emph{energy}
\begin{equation}\label{def:genergy}
	E[u(t)] = \frac12 \Lebn{\nabla u(t)}2^2 - \frac14
	\iint_{{\mathbb{R}}^{d+d}} (V(x-y)+R(x-y)) |u(t,x)|^2 |u(t,y)|^2 dxdy,
\end{equation}
and the momentum $P[u(t)]$.
The solution is unique in the class
\[
	\left\{  u\in L^\infty({\mathbb{R}},\Sigma^{1,1}_V)\cap L^{\frac{8\zeta}d}_{\mathrm{loc}}({\mathbb{R}},W^{1,1}_{V,\frac{4\zeta}{2\zeta-1}}) \Bigm| P[u(t)] = \mathrm{const.} \right\}.
\]	
Moreover, we have the following estimates:
If $V \leqslant 0$ then
\[
	\Lebn{\nabla u(t)}2 \leqslant C, \quad
\Lebn{\Jbr{V(\cdot)}^{1/2} u(t)}2\leqslant C\Jbr{t}^{\frac1{2-\kappa}};
\]
otherwise
\[
	\norm{\nabla u(t)}_{L^2}
	\leqslant C \Jbr{t}^{\frac{1}{1-\kappa}}, \quad
	\Lebn{\Jbr{V(\cdot)}^{1/2} u(t)}2\leqslant C\Jbr{t}^{\frac1{1-\kappa}},
\]
where $\kappa$ is the number defined in Assumption $(\rm{V2})$.
\end{theorem}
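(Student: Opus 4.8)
The plan is to remove the spatial growth of the nonlinearity by passing to the center-of-mass frame, which turns \eqref{eq:gH} into a nonlinear Schr\"odinger equation with the \emph{fixed} subquadratic linear potential $MV$ and a nonlinearity controlled by the $\Sigma^{1,1}_V$-norm, and then to run a Strichartz-based fixed point argument together with energy and virial a priori bounds. Along any solution one has $\frac{d}{dt}X[u]=P[u]$, and $\frac{d}{dt}P[u]=0$ by the oddness of $\nabla(V+R)(x-y)$ under $x\leftrightarrow y$ (valid for the even potentials at hand, and made rigorous by an Ehrenfest-type computation as in Proposition \ref{prop:motion}); hence $X[u(t)]=M(\mathbf a t+\mathbf b)$ with $\mathbf a,\mathbf b$ as in \eqref{def:ab}. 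Applying the Galilean change of unknown \eqref{def:ut} gives $\widetilde{u}$ with $X[\widetilde{u}]\equiv0$, and using \eqref{def:K} to split
\[
	(V*|\widetilde{u}|^2)(x)=MV(x)+\int_{{\mathbb R}^d}K(x,y)|\widetilde{u}(y)|^2\,dy-W(x)\cdot X[\widetilde{u}]
\]
shows that $\widetilde{u}$ solves the general analogue of \eqref{eq:mH},
\[
	i\partial_t\widetilde{u}+\tfrac12\Delta\widetilde{u}+MV(x)\widetilde{u}=-\Big(\int_{{\mathbb R}^d}K(x,y)|\widetilde{u}(y)|^2\,dy\Big)\widetilde{u}-(R*|\widetilde{u}|^2)\widetilde{u} .
\]
The term $y\cdot W(x)$ inside $K$ is exactly what makes this reduction self-consistent: a direct computation gives $\frac{d}{dt}X[\widetilde{u}]=P[\widetilde{u}]$ and $\frac{d}{dt}P[\widetilde{u}]=-A(t)X[\widetilde{u}]$ for a bounded matrix $A$, a homogeneous linear system with zero Cauchy data, so $X[\widetilde{u}]\equiv0$ and the dropped term $W\cdot X[\widetilde{u}]$ does vanish for all time.

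The second step is the linear and local theory for the modified equation. By $(\mathrm V1)$ the potential $MV$ is smooth with all derivatives of order $\ge2$ bounded, i.e.\ subquadratic, so the parametrix construction for subquadratic potentials yields the short-time dispersive bound $\norm{\mathcal U_{MV}(t)}_{L^1\to L^\infty}\lesssim|t|^{-d/2}$ for $|t|\le T_0$ and hence the full Strichartz estimates on compact time intervals. Here $(\mathrm V3)$ does the decisive work: since $\sup_x|K(x,y)|+\sup_x|\nabla_xK(x,y)|\lesssim\Jbr{V(y)}$, the coefficient $\int K(\cdot,y)|f(y)|^2\,dy$ and its gradient are bounded in $L^\infty_x$ by $\Lebn{\Jbr{V}^{1/2}f}2^2$, so the first nonlinear term is a locally Lipschitz map of $\Sigma^{1,1}_V$ into itself; the second term $(R*|\widetilde{u}|^2)\widetilde{u}$ is the usual $H^1$-subcritical Hartree nonlinearity, for which $(\mathrm R1)$ with $\zeta>d/4$ furnishes the Strichartz bounds fixing the exponents $\tfrac{8\zeta}d$ and $\tfrac{4\zeta}{2\zeta-1}$ of the uniqueness class. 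A contraction in $C([-T,T];\Sigma^{1,1}_V)$ intersected with the Strichartz spaces $L^q_t(W^{1,1}_{V,r})$ then gives a unique local solution; transforming back by the inverses of \eqref{def:ut} produces a local solution of \eqref{eq:gH} in $C({\mathbb R};\Sigma^{1,1}_V)\cap C^1({\mathbb R};(\Sigma^{1,1}_V)^\prime)$.

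For globalization I would close two coupled a priori bounds. Conservation of mass and energy follows by pairing \eqref{eq:gH} with $\bar u$ and $\partial_t\bar u$, while $P$ is conserved as above. Writing $Y(t):=\Lebn{\Jbr{V}^{1/2}u(t)}2^2$, the energy law gives
\[
	\tfrac12\Lebn{\nabla u}2^2=E[u_0]+\tfrac14\iint_{{\mathbb R}^{2d}}(V+R)(x-y)|u(x)|^2|u(y)|^2\,dxdy ,
\]
and, bounding the $V$-interaction by $Y$ via the growth control of $V$ (for $V=\lambda|x|^\gamma\chi$ this is the triangle inequality $|x-y|^\gamma\lesssim|x|^\gamma+|y|^\gamma$) and absorbing the $R^+$-interaction through $(\mathrm R2)$ and Gagliardo--Nirenberg, one obtains $\Lebn{\nabla u}2^2\lesssim1+Y$ in general and $\Lebn{\nabla u}2\lesssim1$ when $V\le0$. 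Differentiating $Y$, the real potentials cancel after integration by parts, and $(\mathrm V2)$, namely $|\nabla V|\lesssim\Jbr{V}^{\kappa/2}$, gives
\[
	Y^\prime(t)\lesssim\Big(\int\Jbr{V}^{\kappa}|u|^2\Big)^{1/2}\Lebn{\nabla u}2\lesssim Y^{\kappa/2}\Lebn{\nabla u}2 ,
\]
where I used H\"older and mass conservation. Inserting the energy bound yields $Y^\prime\lesssim Y^{\kappa/2}$ when $V\le0$ and $Y^\prime\lesssim(1+Y)^{(1+\kappa)/2}$ otherwise; integrating these (recall $\kappa\in[0,1)$) gives precisely $Y^{1/2}\lesssim\Jbr t^{1/(2-\kappa)}$ in the first case and $Y^{1/2},\Lebn{\nabla u}2\lesssim\Jbr t^{1/(1-\kappa)}$ in the second, which are the asserted rates and which preclude finite-time blow-up, so the local solution extends globally.

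Uniqueness in the stated class is inherited from the modified equation: any such solution has constant momentum, hence the prescribed linear center-of-mass motion, so its center-of-mass transform solves the modified equation with the same data and must coincide with the one constructed, by the Lipschitz estimate in the Strichartz norm. The main obstacle I expect is the linear theory of the second step: unlike the quadratic case of Theorem \ref{thm:harmonic}, $\mathcal U_{MV}$ has no explicit kernel, so one must invoke the subquadratic parametrix and Strichartz machinery and---more delicately---control how $\mathcal U_{MV}$ interacts with the anisotropic weight $\Jbr{V}^{1/2}$ defining $\Sigma^{1,1}_V$. Making the nonlinear estimates close in the $V$-weighted Strichartz spaces $W^{1,1}_{V,r}$, rather than in the free ones, is where the real effort lies.
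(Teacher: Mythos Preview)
Your plan is essentially the paper's: Galilean transform to the center-of-mass frame, Strichartz-based contraction for the modified equation with the fixed linear potential $MV$ (the paper handles the interaction of $e^{itA}$ with $\nabla$ and $\Jbr{V}^{1/2}$ by the commutator Lemma~\ref{lem:commutatorA}, which is exactly the ``real effort'' you anticipate), and coupled energy/weighted-$L^2$ bounds for globalization. Your ODE $Y'\lesssim Y^{\kappa/2}\Lebn{\nabla u}2$ is the differentiated form of the paper's integral inequality in Lemma~\ref{lem:estVu}; both yield the stated rates.

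One genuine gap in the general setting: your energy step ``bounding the $V$-interaction by $Y$ via the growth control of $V$'' is not available under $(\mathrm V1)$--$(\mathrm V4)$ alone. No assumption gives $|V(x-y)|\lesssim\Jbr{V(x)}+\Jbr{V(y)}$; the triangle inequality you cite is specific to $|x|^\gamma$. The paper's route is to run the a priori argument for $\widetilde u$ (equivalently, for the \eqref{eq:gmH} solution with neutral data), so that $X[\widetilde u]\equiv0$, and then use \eqref{def:K} and $(\mathrm V3)$:
\[
\iint V(x-y)\,|\widetilde u(x)|^2|\widetilde u(y)|^2\,dx\,dy
= M\!\int V|\widetilde u|^2 + \iint K(x,y)|\widetilde u|^2|\widetilde u|^2
\le CM\,Y,
\]
the $W$-term dropping precisely because $X[\widetilde u]=0$ (see Lemma~\ref{lem:H1bound}). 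The bounds for $u$ then follow from those for $\widetilde u$ via the explicit Galilean relation. Relatedly, the identity $X[\widetilde u]\equiv P[\widetilde u]\equiv0$ at $\Sigma^{1,1}_V$ regularity is not obtained by a direct Ehrenfest computation but by approximation with $\Sigma^{2,2}_V$ data (Theorem~\ref{thm:LWPgmH}\,(2)); your reference to Proposition~\ref{prop:motion} is suggestive but that argument is tailored to the quadratic case.
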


We now apply the transform observed in the introduction.
Then, the problem boils down to the Cauchy problem of
the following modified version of the \eqref{eq:gH}:
\begin{equation}\label{eq:gmH}\tag{$\mathrm{mgH}$}
\left\{
\begin{aligned}
&i \partial_t u + \frac{1}2 \Delta u + M V(x) u= - u \int_{{\mathbb{R}}^d} K(\cdot,y) |u(y)|^2 dy  -(R*|u|^2)u,\\
& u(0)=u_0 \in \Sigma^{1,1}_V,
\end{aligned}
\right.
\end{equation}
where $M$ is as in \eqref{def:M} and
$K$ is defined by \eqref{def:K} with $W$ given in Assumption $(\rm{V3})$.
If $(\rm{V3})$ is satisfied then $u\int K(\cdot,y)|u(y)|^2 dy \in \Sigma^{j,1}_V$ for
$u\in \Sigma^{j,1}_V$, where $j=0,1$.

Denote $A:=\frac12\Delta + M V(x)$.
It is well known that $A$ is an essentially self-adjoint operator on $L^2$ as long as
Assumption $(\rm{V1})$ holds.
Moreover, we have Strichartz's estimate under this condition (\cite{YajCMP}).
\begin{lemma}[Strichartz's estimate]
Suppose $(\rm{V1})$. For any fixed $T>0$, the following properties hold:
\begin{itemize}
\item 
Suppose $\varphi \in L^2({\mathbb{R}}^2)$.
For any admissible pair $(q,r)$, there exists a constant $C=C(T,q,r)$
such that
\[
	\norm{e^{itA} \varphi}_{L^q((-T,T);L^r)} \leqslant C \Lebn{\varphi}2.
\]
\item Let $I \subset (-T,T)$ be an interval and $t_0 \in \overline{I}$. 
For any admissible pairs $(q,r)$ and $(\gamma,\rho)$,
there exists a constant $C=C(t,q,r,\gamma,\rho)$ such that
\[
	\norm{\int_{t_0}^t e^{i(t-s)A} F(s) ds}_{L^q(I;L^r)}
	\leqslant C \norm{F}_{L^{\gamma^\prime}(I; L^{\rho^\prime})}
\]
for every $F \in L^{\gamma^\prime}(I; L^{\rho^\prime})$.
\end{itemize}
\end{lemma}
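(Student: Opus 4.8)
The plan is to reduce the two Strichartz estimates to an abstract argument fed by a single analytic input: a short-time dispersive estimate for the propagator $e^{itA}$ with $A=\frac12\Delta+MV$. The starting point is that, under $(\rm{V1})$, the potential $MV$ is smooth and sub-quadratic (all derivatives of order $\geqslant 2$ are bounded, so $V$ itself grows at most quadratically), and $A$ is essentially self-adjoint; hence by Stone's theorem $\{e^{itA}\}_{t\in{\mathbb{R}}}$ is a strongly continuous unitary group with $(e^{itA})^*=e^{-itA}$ and $e^{itA}e^{isA}=e^{i(t+s)A}$. For such operators Fujiwara's parametrix construction, refined by Yajima \cite{YajCMP}, produces the integral kernel of $e^{itA}$ in the oscillatory form $(2\pi i t)^{-d/2}a(t,x,y)e^{iS(t,x,y)}$ with amplitude $a$ bounded on a short time interval, giving the dispersive bound $\norm{e^{itA}}_{L^1\to L^\infty}\leqslant C|t|^{-d/2}$ for $0<|t|\leqslant T_0$, where $T_0>0$ depends only on $V$. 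Interpolating this with $L^2$-unitarity yields $\norm{e^{itA}}_{L^{r'}\to L^r}\leqslant C|t|^{-\delta(r)}$ for $0<|t|\leqslant T_0$ and $2\leqslant r\leqslant\infty$, which is exactly the dispersive hypothesis required by the abstract machinery.

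Next I would run the abstract Keel--Tao theorem (with dispersive exponent $d/2$) on time intervals of length at most $T_0$. On any such interval $J$, every pair $t,s\in J$ satisfies $|t-s|\leqslant T_0$, so $e^{i(t-s)A}$ obeys both the energy bound (unitarity) and the dispersive bound above; Keel--Tao then delivers the homogeneous estimate $\norm{e^{itA}\varphi}_{L^q(J;L^r)}\leqslant C\Lebn{\varphi}2$ and the \emph{retarded} inhomogeneous estimate on $J$ for all admissible pairs, including the endpoint $(q,r)=(2,2d/(d-2))$ when $d\geqslant3$ (the case $(d,q,r)=(2,2,\infty)$ is the forbidden endpoint already removed in the definition of admissibility). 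The constants depend only on the exponents and on $T_0$, hence only on $V$.

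To pass from the short interval to the fixed interval $(-T,T)$, I would cover $(-T,T)$ by finitely many subintervals $I_1,\dots,I_N$ of length at most $T_0$, with $N\sim 2T/T_0$. For the homogeneous estimate this is immediate: writing $e^{itA}\varphi=e^{i(t-t_j)A}\bigl(e^{it_jA}\varphi\bigr)$ on $I_j$, unitarity gives $\Lebn{e^{it_jA}\varphi}2=\Lebn{\varphi}2$, so the local estimate applies on each $I_j$ and the $L^q$-in-time norms add over the finitely many pieces (using $q\geqslant2$). For the inhomogeneous estimate I would split the retarded double integral into diagonal blocks (with $t,s$ in the same $I_j$), handled directly by the local retarded estimate, and off-diagonal blocks ($t\in I_j$, $s\in I_k$ with $k<j$); on an off-diagonal block I would factor
\[
	\int_{I_k} e^{i(t-s)A}F(s)\,ds = e^{i(t-t_j)A}\Bigl(e^{i(t_j-t_k)A}\int_{I_k} e^{i(t_k-s)A}F(s)\,ds\Bigr),
\]
bound the inner $L^2$-norm by the dual local Strichartz estimate on $I_k$, use unitarity of $e^{i(t_j-t_k)A}$, and then the homogeneous local estimate on $I_j$. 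Summing the resulting finite matrix of norms over the $O(N^2)$ blocks (using $\gamma'\leqslant2\leqslant q$ on the diagonal) closes the estimate, with the $T$-dependence of the final constant entering precisely through $N$.

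The main obstacle is that the dispersive estimate is genuinely only local in time: when $V>0$ the flow develops focal points (as in the harmonic oscillator, where $e^{itA}$ fails to be bounded $L^1\to L^\infty$ at the focusing times), so no global-in-time dispersive bound exists and Keel--Tao cannot be applied on $(-T,T)$ at once. This is what forces the covering-and-patching scheme and makes the constants depend on $T$. The one structural point demanding care is the endpoint inhomogeneous estimate: I would exploit that Keel--Tao already furnishes the \emph{retarded} endpoint bound on each short $I_j$, so the diagonal blocks need no appeal to the Christ--Kiselev lemma (which is unavailable at the endpoint), while the off-diagonal blocks invoke only the homogeneous estimate together with $L^2$-duality and unitarity; all of these ingredients survive at the endpoint, so the finite double sum over blocks remains controlled.
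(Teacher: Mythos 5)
Your proposal is correct, and it is essentially the argument the paper relies on: the paper gives no proof of this lemma at all, simply citing Yajima \cite{YajCMP}, whose fundamental-solution construction (Fujiwara's parametrix for smooth sub-quadratic potentials) is exactly your short-time dispersive estimate, from which the stated local-in-time Strichartz bounds follow by the standard Keel--Tao plus interval-patching machinery you describe. In other words, you have written out in full the standard proof that the paper outsources to the literature, including the correct handling of the two genuine subtleties (the dispersive bound being only local in time because of focal points, and the endpoint retarded estimate being taken directly from Keel--Tao so that Christ--Kiselev is never needed).
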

\begin{lemma}\label{lem:commutatorA}
Let $A=\frac12\Delta + M V(x)$.
Suppose that Assumption $(\rm{V1})$ holds.
Denote by $e^{itA}$ a one-parameter group generated by $A$.
Let $F$ be an arbitrary weight function such that $\nabla F$ and $\Delta F$ are bounded.
Then, for all $f \in \Sigma^{1,1}_V$,
\[
	e^{- itA} \nabla e^{itA} f= \nabla f + iM\int_0^t e^{-isA} \left(\nabla V\right) e^{isA} f ds
\]
and
\[
	e^{-itA}F e^{itA} f=  F f + i\int_0^t e^{-isA} 
	\left(\nabla F\cdot \nabla + \frac12 (\Delta F)\right)e^{isA} f ds.
\]
\end{lemma}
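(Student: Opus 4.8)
Both identities are instances of the Heisenberg--Duhamel formula for a conjugated operator, so the plan is to differentiate in time and then integrate. Writing $B$ for either $\nabla$ or the operator of multiplication by $F$, set $G(t):=e^{-itA}Be^{itA}f$, so that $G(0)=Bf$. Since $A$ commutes with $e^{\pm itA}$, the product rule gives, at least formally,
\[
	\frac{d}{dt}G(t)=e^{-itA}\bigl(-iAB+iBA\bigr)e^{itA}f=i\,e^{-itA}[B,A]e^{itA}f,
\]
and integrating from $0$ to $t$ produces $G(t)=Bf+i\int_0^t e^{-isA}[B,A]e^{isA}f\,ds$. It then remains to identify the two commutators and to justify these operator manipulations for $f$ of the limited regularity provided by $\Sigma^{1,1}_V$.

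The commutators follow from the Leibniz rule. The constant-coefficient operator $\frac12\Delta$ commutes with $\nabla$, while $\nabla(Vf)-V\nabla f=(\nabla V)f$, so $[\nabla,A]=M(\nabla V)$ as a multiplication operator; this is the first identity. For the weight $F$, multiplication by $F$ commutes with multiplication by $V$, and $\Delta(Ff)=F\Delta f+2\nabla F\cdot\nabla f+(\Delta F)f$ expresses $[\frac12\Delta,F]$ through $\nabla F\cdot\nabla$ and $\Delta F$; feeding this into the Duhamel formula yields the second identity. The structural point is that both commutators are differential operators of order at most one whose coefficients are controlled: $\nabla V$ by $\Jbr{V}^{\kappa/2}$ via Assumption $(\rm{V2})$, and $\nabla F$, $\Delta F$ bounded by hypothesis. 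Hence $[B,A]e^{isA}f\in L^2$ as soon as $e^{isA}f\in H^1$ (for the weighted factor one uses $\kappa<1$ together with $e^{isA}f\in\Sigma^{1,1}_V$), and the right-hand side integrals are genuine $L^2$-valued Bochner integrals.

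To make the formal differentiation rigorous I would first establish both identities on a core that is invariant under $e^{itA}$ --- for instance the Schwartz class or $\bigcap_n D(A^n)$, which is preserved by the flow under Assumption $(\rm{V1})$. On such data $s\mapsto e^{isA}f$ is smooth with values in every domain that appears, so the product rule and the fundamental theorem of calculus apply classically and the identities hold verbatim. I would then extend to general $f\in\Sigma^{1,1}_V$ by approximating $f$ in the $\Sigma^{1,1}_V$-norm by core elements and passing to the limit: the left-hand sides converge because $e^{\pm itA}$ is unitary and the coefficients of $B$ are controlled, while the integrands converge in $L^2$, uniformly on compact time intervals, by the same bounds together with dominated convergence.

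The main obstacle is this last, analytic step rather than the algebra. One must know that $e^{isA}$ propagates membership in the weighted space, so that $\Jbr{V}^{\kappa/2}e^{isA}f$ remains in $L^2$ with an $s$-locally bounded norm, and that $s\mapsto e^{-isA}[B,A]e^{isA}f$ is $L^2$-continuous --- precisely what legitimizes both the Bochner integral and the interchange of limit and integral in the density argument. This propagation is driven by the sub-quadratic control $\abs{\nabla V}\leqslant C\Jbr{V}^{\kappa/2}$ with $\kappa<1$ of Assumption $(\rm{V2})$, together with the essential self-adjointness of $A$ and Strichartz's estimate recorded above; granting it, the limit passage is routine.
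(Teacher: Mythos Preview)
The paper states this lemma without proof, treating it as a standard commutator identity; there is nothing in the paper to compare your argument against. Your Heisenberg--Duhamel approach is the natural one and the commutator computations are correct (note, incidentally, that your computation of $[F,A]=-[\tfrac12\Delta,F]$ actually produces $-i$ rather than $+i$ in front of the integral in the second identity; this is harmless for the norm estimates the paper draws from the lemma).

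Two remarks on the justification step. First, you invoke Assumption $(\mathrm{V2})$ to control $\nabla V$ by $\Jbr{V}^{\kappa/2}$, but the lemma as stated assumes only $(\mathrm{V1})$. Under $(\mathrm{V1})$ alone, $\nabla V$ grows at most linearly and there is no reason for $(\nabla V)e^{isA}f$ to lie in $L^2$; the identity must then be read in a weaker sense, or one silently adds $(\mathrm{V2})$ --- which is harmless since the paper only uses the lemma together with $(\mathrm{V2})$. Second, watch the circularity: the propagation $e^{isA}:\Sigma^{1,1}_V\to\Sigma^{1,1}_V$ that you want for the limit passage is exactly what the paper derives \emph{from} the lemma. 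The clean resolution is the one you sketch: prove both identities on a smooth invariant core, combine them (the $\nabla$-identity feeds $\Jbr{V}^{1/2}e^{isA}f$ via $(\mathrm{V2})$, the $F=\Jbr{V}^{1/2}$-identity feeds $\nabla e^{isA}f$) and close by Gronwall to get $\|e^{itA}\phi\|_{\Sigma^{1,1}_V}\leqslant C(t)\|\phi\|_{\Sigma^{1,1}_V}$ on the core; only then extend by density. Identity and propagation thus come out together, not one before the other.
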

By means of this lemma, it immediately follows from $(\rm{V1})$ and $(\rm{V2})$
that
\[
	\norm{e^{itA}\phi}_{L^\infty((-T,T),\Sigma^{1,1}_V)}
	\leqslant \norm{\phi}_{\Sigma^{1,1}_V} + CT \norm{e^{itA}\phi}_{L^\infty((-T,T),\Sigma^{1,1}_V)} + CT\Lebn{\phi}2.
\]
This implies $\norm{e^{itA}\phi}_{\Sigma^{1,1}_{V}} \leqslant C(|t|,M)\norm{\phi}_{\Sigma^{1,1}_{V}}$.
A similar argument shows
$\norm{e^{itA}\phi}_{\Sigma^{k,k}_{V}} \leqslant C(k,|t|,M)\norm{\phi}_{\Sigma^{k,k}_{V}}$
for any positive integer $k$.
Further, as in \cite[Lemma 2.11]{DR-JMP}, $e^{itA}\phi$
is continuous in $\Sigma^{k,k}_V$
with respect to $M$ for each $t$ and $\phi\in \Sigma^{k,k}_V$.

\subsection{\texorpdfstring{Local well-posedness of \eqref{eq:gmH}}{Local well-posedness of (mgH)}}
We first give a unique local solution to \eqref{eq:gmH}.
Throughout this subsection and the next subsection
we suppose that the constant $M$ in the operator $A$ is not
necessarily equal to $\Lebn{u_0}2^2$.
In what follows, we write $L^p((-T,T);X)=L^p_T X$, for short.
The integral form of \eqref{eq:gmH} is
\begin{multline}\label{def:Q}
	u(t) = Q[u]:= e^{itA}u_0 +i \int_0^t e^{i(t-s)A} u(s)\int_{{\mathbb{R}}^d} K(x,y)|u(s,y)|^2dy ds\\
	+i \int_0^t e^{i(t-s)A} ((R*|u|^2)u)(s) ds.
\end{multline}

\begin{proposition}
Suppose that Assumptions $(\rm{V1})$, $(\rm{V2})$, $(\rm{V3})$, and $(\rm{R1})$ are satisfied.
Let $u_0\in \Sigma^{1,1}_V$.
Define a Banach space
\[
	\mathcal{H}_{T,\delta} := \{f \in L^\infty((-T,T);\Sigma^{1,1}_V);\,
	\norm{f}_{\mathcal{H}_T} \leqslant \delta \},
\]
where
\[
	\norm{f}_{\mathcal{H}_T} :=
	\norm{f}_{L^\infty_T {\Sigma}^{1,1}_V}
	+ \norm{f}_{L^q_T W^{1,1}_{V,r}}
\]
with $q=8\zeta/d$ and $r=4\zeta/(2\zeta-1)$.
Then, 
there exists $\delta_0$ depending only on $\norm{u_0}_{\Sigma^{1,1}_V}$
such that for any $\delta \in [\delta_0, \infty)$,
the operator $Q$ given in \eqref{def:Q} is a contraction map
from $\mathcal{H}_{T,\delta}$ to itself for suitable $T=T(\delta)>0$.
\end{proposition}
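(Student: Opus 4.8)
The plan is to solve the integral equation \eqref{def:Q} by the Banach fixed point theorem on the ball $\mathcal{H}_{T,\delta}$, using the two tools already at hand: Strichartz's estimate for $e^{itA}$ and the bound $\norm{e^{itA}\phi}_{\Sigma^{1,1}_V}\leqslant C(|t|,M)\norm{\phi}_{\Sigma^{1,1}_V}$ established just above this subsection. I would split the nonlinearity into the local-in-$x$ part $N_K[u]:=u\int_{{\mathbb{R}}^d} K(\cdot,y)|u(y)|^2\,dy$ and the genuine Hartree part $N_R[u]:=(R*|u|^2)u$, and estimate the two Duhamel contributions separately in each of the two pieces $\norm{\cdot}_{L^\infty_T\Sigma^{1,1}_V}$ and $\norm{\cdot}_{L^q_TW^{1,1}_{V,r}}$ of the norm $\norm{\cdot}_{\mathcal{H}_T}$. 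The linear term $e^{itA}u_0$ is controlled by $C(T,M)\norm{u_0}_{\Sigma^{1,1}_V}$ through these two tools.

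For $N_K$ I would use Assumption $(\rm{V3})$ directly. Since $\sup_x\abs{K(x,y)}+\sup_x\abs{\nabla_xK(x,y)}\leqslant C\Jbr{V(y)}$, the function $g(x):=\int K(x,y)|u(y)|^2\,dy$ satisfies $\norm{g}_{L^\infty}+\norm{\nabla g}_{L^\infty}\leqslant C\int\Jbr{V(y)}|u(y)|^2\,dy=C\Lebn{\Jbr{V(\cdot)}^{1/2}u}2^2\leqslant C\norm{u}_{\Sigma^{1,1}_V}^2$. As $N_K[u]=g\,u$ is a multiplication by a bounded function with bounded gradient, both the weight $\Jbr{V(\cdot)}^{1/2}$ and one derivative fall on $u$, giving $\norm{N_K[u]}_{\Sigma^{1,1}_V}\leqslant C\norm{u}_{\Sigma^{1,1}_V}^3$ uniformly in $t$. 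Hence the associated Duhamel term is bounded in $L^\infty_T\Sigma^{1,1}_V$ by $C(T,M)\,T\norm{u}_{L^\infty_T\Sigma^{1,1}_V}^3$, and in $L^q_TW^{1,1}_{V,r}$ by the same cube via the inhomogeneous Strichartz estimate with $(\gamma,\rho)=(\infty,2)$ together with Lemma \ref{lem:commutatorA} to carry $\Jbr{V(\cdot)}^{1/2}$ and $\nabla$ through the propagator.

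The term $N_R$ is the $H^1$-subcritical Hartree nonlinearity, and here the exponents $q=8\zeta/d$, $r=4\zeta/(2\zeta-1)$ are chosen precisely so that $(q,r)$ is admissible and the estimate closes with a positive power of $T$. Recall from the remark after Assumption $2$ that $N_R[u]$ only lies in $(H^1)^\prime$, so this term genuinely requires the space-time Strichartz norm and must be placed in a dual norm. I would split $R=R_\zeta+R_\infty$ with $R_\zeta\in L^\zeta$, $R_\infty\in L^\infty$ as in $(\rm{R1})$: the $L^\infty$ part contributes $\norm{R_\infty*|u|^2}_{L^\infty}\leqslant\norm{R_\infty}_{L^\infty}\Lebn{u}2^2$ and is harmless, while for $R_\zeta$ I would apply Young's (or Hardy--Littlewood--Sobolev) inequality to place $R_\zeta*|u|^2$ in the right Lebesgue space, then Hölder in $x$ and in $t$, the hypothesis $\zeta>d/4$ producing a factor $T^{\alpha}$ with $\alpha>0$. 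The derivative and the weight $\Jbr{V(\cdot)}^{1/2}$ are distributed onto the remaining factor $u$ by the product rule, the convolution acting as a multiplier, and are moved through $e^{i(t-s)A}$ by Lemma \ref{lem:commutatorA}, so that $\norm{\int_0^t e^{i(t-s)A}N_R[u]\,ds}_{\mathcal{H}_T}\leqslant CT^{\alpha}\norm{u}_{\mathcal{H}_T}^3$.

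Collecting these yields $\norm{Q[u]}_{\mathcal{H}_T}\leqslant C_0\norm{u_0}_{\Sigma^{1,1}_V}+C(T,M)(T+T^{\alpha})\norm{u}_{\mathcal{H}_T}^3$, and the analogous computation on differences (the nonlinearity being cubic) gives $\norm{Q[u]-Q[v]}_{\mathcal{H}_T}\leqslant C(T,M)(T+T^{\alpha})(\norm{u}_{\mathcal{H}_T}^2+\norm{v}_{\mathcal{H}_T}^2)\norm{u-v}_{\mathcal{H}_T}$. I would then set $\delta_0:=2C_0\norm{u_0}_{\Sigma^{1,1}_V}$; for any $\delta\geqslant\delta_0$ the linear contribution is at most $\delta/2$, and choosing $T=T(\delta)$ small enough that $C(T,M)(T+T^{\alpha})\delta^3\leqslant\delta/2$ and $C(T,M)(T+T^{\alpha})\delta^2\leqslant 1/2$ makes $Q$ a contraction of $\mathcal{H}_{T,\delta}$ into itself. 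The main obstacle is the weighted part of these estimates: pushing $\Jbr{V(\cdot)}^{1/2}$ and $\nabla$ through the Duhamel propagator without destroying the gain in $T$. This is exactly where Lemma \ref{lem:commutatorA} and Assumption $(\rm{V2})$ enter, since the resulting commutators involve $\nabla V$, which by $(\rm{V2})$ is bounded by $\Jbr{V(\cdot)}^{\kappa/2}$ with $\kappa<1$ and is therefore subordinate to the weight $\Jbr{V(\cdot)}^{1/2}$; this subordination is what lets the weighted Strichartz estimates close and the fixed point go through.
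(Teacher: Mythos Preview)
Your approach matches the paper's: fixed point via Strichartz for $e^{itA}$, splitting $R=R_1+R_2\in L^\zeta+L^\infty$ (the positive power of $T$ is $\alpha=1-d/(4\zeta)$), and Lemma~\ref{lem:commutatorA} to move $\nabla$ and $\Jbr{V}^{1/2}$ past the propagator. There is, however, one point you gloss over that the paper makes explicit. When you commute $\nabla$ (or $\Jbr{V}^{1/2}$) through the full Duhamel operator, the remainder term does not land on $u$ but on $Q[u]$ itself:
\[
\nabla Q[u] = e^{itA}\nabla u_0 + i\int_0^t e^{i(t-s)A}\nabla(\text{nonlinearity})\,ds + iM\int_0^t e^{i(t-s)A}(\nabla V)\,Q[u](s)\,ds,
\]
and similarly for $\Jbr{V}^{1/2}Q[u]$. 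Consequently the a priori bound is not of the clean form you wrote but rather
\[
\norm{Q[u]}_{\mathcal{H}_T}\leqslant C_1\norm{u_0}_{\Sigma^{1,1}_V}+C_2(T+T^{1-d/(4\zeta)})\delta^3 + C_3 T\norm{Q[u]}_{\mathcal{H}_T},
\]
with a self-referential last term coming from the commutators. One must first take $T$ small enough that $C_3T\leqslant 1/2$ in order to absorb it, and only then set $\delta_0$ (the paper takes $\delta_0=3C_1\norm{u_0}_{\Sigma^{1,1}_V}$). Your closing paragraph shows you are aware of the commutator mechanism, but this coupled structure among the estimates for $Q[u]$, $\nabla Q[u]$, and $\Jbr{V}^{1/2}Q[u]$, together with the absorption step, is precisely what ``closing the weighted Strichartz estimates'' means here and should be written out rather than left implicit.
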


\begin{proof}
We prove that $Q$ is a contraction map in two steps.

\subsubsection*{Step 1}
Fix $\delta>0$.
We show that the existence of $T$ such that
$Q[u] \in \mathcal{H}_{T,\delta}$ as long as $u \in \mathcal{H}_{T,\delta}$.

Let $u \in  \mathcal{H}_{T,\delta}$.
We first establish estimates on $R$.
Set $R=R_1 + R_2$ with $R_1 \in L^\zeta$ and $R_2\in L^\infty$.
One sees from Young's inequality, H\"odler's inequality, and Sobolev's embedding that
\begin{align*}
	\norm{\mathcal{A}((R_1*|u|^2)u)}_{L^{q^\prime}_TL^{r^\prime}}
	&{}\leqslant C T^{1-\frac{d}{4\zeta}}\Lebn{R_1}\zeta \norm{\nabla u}_{L^\infty_T L^2}^{\frac{d}{2\zeta}}
	\norm{u}_{L^\infty_T L^2}^{2-\frac{d}{2\zeta}} \norm{\mathcal{A}u}_{L^q_TL^r}\\
	&{}\leqslant CT^{1-\frac{d}{4\zeta}} \delta^3
\end{align*}
and
\begin{align*}
	\norm{\mathcal{A}((R_2*|u|^2)u)}_{L^1_TL^2}
	&{}\leqslant C T \Lebn{R_2}\infty \norm{u}_{L^\infty_T L^2}^2
	\norm{\mathcal{A}u}_{L^\infty_T L^2}\\
	&{}\leqslant CT \delta^3,
\end{align*}
where $\mathcal{A}=\mathrm{Id}$, $\nabla$ or $\Jbr{V(x)}^{1/2}$.
By Strichartz's estimate, we have
\begin{multline*}
	\norm{Q[u]}_{L^\infty_T L^2} + \norm{Q[u]}_{L^q_T L^r} \leqslant 
	C\Lebn{u_0}2 + C\norm{u\int_{{\mathbb{R}}^d} K(\cdot,y)|u(y)|^2dy}_{L^1_TL^2}\\
	+C\norm{(R_1*|u|^2)u}_{L^{q^\prime}_TL^{r^\prime}}+
	C\norm{(R_2*|u|^2)u}_{L^1_TL^2}.
\end{multline*}
Since $\sup_x |K(x,y)| \leqslant C\Jbr{V(y)}$ holds
by Assumption $(\rm{V3})$, we have
\begin{align*}
	\Lebn{u\int_{{\mathbb{R}}^d} K(\cdot,y)|u(y)|^2dy}2 
	\leqslant{}&\norm{u(x)K(x,y)|u(y)|^2}_{L^2_x({\mathbb{R}}^d; L^1_y({\mathbb{R}}^d))}\\
	\leqslant{}&\norm{u(x)K(x,y)|u(y)|^2}_{L^1_y({\mathbb{R}}^d; L^2_x({\mathbb{R}}^d))}\\
	\leqslant{}& C \norm{u}_{L^2} \norm{\Jbr{V(\cdot)}^{1/2}u}_{L^2}^2.
\end{align*}
Take $L^1_T$ norm to yield
\[
	\norm{u\int_{{\mathbb{R}}^d} K(\cdot,y)|u(y)|^2dy}_{L^1_TL^2} \leqslant C
	T\norm{u}_{L^\infty_T L^2} \norm{u}_{L^\infty_T \Sigma^{1,1}_V}^2.
\]
Hence
\begin{equation}\label{eq:estQ1}
	\norm{Q[u]}_{L^\infty_T L^2}+\norm{Q[u]}_{L^q_T L^r} 
	\leqslant C\Lebn{u_0}2+ 
	C(T+T^{1-\frac{d}{4\zeta}})\delta^3.
\end{equation}
\smallbreak

We next estimate $\nabla Q[u]$.
By Lemma \ref{lem:commutatorA}, one sees that
\begin{align*}
	\nabla Q [u] ={}& e^{itA} \nabla u_0 + i \int_0^t e^{i(t-s)A} \nabla \left(u(s)\int_{{\mathbb{R}}^d} K(x,y)|u(s,y)|^2dy\right) ds \\
	&{} + i \int_0^t e^{i(t-s)A} \nabla \left( (R*|u|^2)u \right)ds \\
	&{}
	+ i M\int_0^t e^{i(t-s)A} (\nabla V) Q[u] (s)ds.
\end{align*}
By Strichartz's estimate,
\begin{align*}
	&\norm{\nabla Q [u]}_{L^\infty_TL^2} + \norm{\nabla Q [u]}_{L^q_TL^r}\\
	&{}\leqslant C\norm{\nabla u_0}_{L^2} + C\norm{\nabla \left(u(s)\int_{{\mathbb{R}}^d} K(x,y)|u(s,y)|^2dy\right)}_{L^1_TL^2}
	\\&\quad{}
	+C\norm{\nabla ((R_1*|u|^2)u)}_{L^{q^\prime}_TL^{r^\prime}}
	+C\norm{\nabla ((R_2*|u|^2)u)}_{L^1_TL^2}
	\\&\quad{}
	+CM \norm{(\nabla V)Q[u]}_{L^1_T L^2}.
\end{align*}
Using Assumption $(\rm{V3})$, we obtain
\begin{align*}
	&\norm{\left( \int_{{\mathbb{R}}^d} (\nabla K(\cdot,y))|u(y)|^2dy\right)u}_{L^1_TL^2}
	\leqslant CT\norm{\Jbr{V(\cdot)}^{1/2}u}_{L^\infty_TL^2}^2 \norm{u}_{L^\infty_TL^2},\\
	&\norm{(\nabla u)\int_{{\mathbb{R}}^d} K(\cdot,y)|u(y)|^2dy}_{L^1_TL^2}
	\leqslant CT\norm{\Jbr{V(\cdot)}^{1/2}u}_{L^\infty_TL^2}^2 \norm{\nabla u}_{L^\infty_TL^2}.
\end{align*}
It follows from Assumption $(\rm{V2})$ that
\[
	\norm{(\nabla V)Q[u]}_{L^1_T L^2}
	\leqslant C\norm{\Jbr{V(\cdot)}^{1/2}Q[u]}_{L^1_T L^2}.
\]
We hence deduce that
\begin{multline}\label{eq:estQ2}
	\norm{\nabla Q[u]}_{L^\infty_TL^2} + 
	\norm{\nabla Q[u]}_{L^q_TL^r} \\
	\leqslant C\Lebn{\nabla u_0}2
	+ C(T+T^{1-\frac{d}{4\zeta}})\delta^3 + C M T \norm{\Jbr{V(\cdot)}^{1/2}Q[u]}_{L^\infty_TL^2}.
\end{multline}
\smallbreak

Let us proceed to the estimate of $\Jbr{V(x)}^{1/2}Q[u]$.
Apply the second identity of Lemma \ref{lem:commutatorA} with
$F=\Jbr{V(x)}^{1/2}$ to yield
\begin{align*}
	&\Jbr{V(x)}^{1/2}Q[u] \\
	&{}= e^{itA} \Jbr{V(x)}^{1/2} u_0 \\
	&\quad{} + i\int_0^t e^{i(t-s)A} \Jbr{V(x)}^{1/2} u(s)\int_{{\mathbb{R}}^d} K(x,y)|u(s,y)|^2dy  ds\\
	&\quad{}+ i\int_0^t e^{i(t-s)A} \Jbr{V(x)}^{1/2} (R*|u|^2)u ds\\
	&\quad{}+i \int_0^t e^{i(t-s)A}\left(\nabla \Jbr{V(x)}^{1/2} \cdot \nabla + \frac12(\Delta \Jbr{V(x)}^{1/2})\right)Q[u](s)ds.
\end{align*}
Now, Assumptions $(\rm{V1})$ and $(\rm{V3})$ give us that
\[
	\abs{\nabla \Jbr{V(x)}^\frac12} \leqslant C, \quad
	\abs{\Delta \Jbr{V(x)}^\frac12} \leqslant C.
\]
Hence,
\begin{multline}\label{eq:estQ3}
	\norm{\Jbr{V(\cdot)}^{1/2}Q[u]}_{L^\infty_T L^2}
	+ \norm{\Jbr{V(\cdot)}^{1/2}Q[u]}_{L^q_T L^r}\\
	\leqslant C\Lebn{\Jbr{V(\cdot)}^{1/2} u_0}2
	+ C(T+T^{1-\frac{d}{4\zeta}})\delta^3 \\
	+CT(\norm{\nabla Q[u]}_{L^\infty_TL^2}+\norm{Q[u]}_{L^\infty_TL^2}). 
\end{multline}
From \eqref{eq:estQ1}, \eqref{eq:estQ2}, and \eqref{eq:estQ3},
we finally reach to 
\[
	\norm{Q[u]}_{\mathcal{H}_T}
	\leqslant C_1 \norm{u_0}_{\Sigma^{1,1}_V}
	+ C_2 (T+T^{1-\frac{d}{4\zeta}})\delta^3
	+ C_3 T \norm{Q[u]}_{\mathcal{H}_T}.
\]
Letting $T$ so small that $C_3 T \leqslant 1/2$, we see
$\norm{Q[u]}_{\mathcal{H}_T}
	\leqslant 2C_1 \norm{u_0}_{\Sigma^{1,1}_V}
	+ 2C_2 (T+T^{1-\frac{d}{4\zeta}})\delta^3$.
We now choose $\delta_0:= 3C_1 \norm{u_0}_{\Sigma^{1,1}_V}$.
Then, for any $\delta\geqslant \delta_0$,
$Q$ maps $\mathcal{H}_{T,\delta}$ to itself, provided $T$
is so small that $2C_2(T+T^{1-\frac{d}{4\zeta}})\delta^2
\leqslant 1/3$.

\subsubsection*{Step 2}
We next show that $Q$ is a contraction map.
In the same way as in Step 1, we obtain
\begin{equation}\label{eq:contraction}
	\norm{Q[u] -Q[v]}_{\mathcal{H}_{T}}
	\leqslant C_4 (T+T^{1-\frac{d}{4\zeta}}) (\norm{u}_{\mathcal{H}_{T}}^2+\norm{v}_{\mathcal{H}_{T}}^2)\norm{u-v}_{\mathcal{H}_{T}}
\end{equation}
for small $T$ and $u,v \in \mathcal{H}_{T,\delta}$.
Letting $T$ so small that $2C_4(T+T^{1-\frac{d}{4\zeta}})\delta^2\leqslant 1/2$ if necessary,
we conclude that
$\norm{Q[u] -Q[v]}_{\mathcal{H}_{T}}
	\leqslant (1/2) \norm{u-v}_{\mathcal{H}_{T}}$.
\end{proof}

\begin{proposition}
Suppose that Assumptions $(\rm{V1})$, $(\rm{V2})$, $(\rm{V3})$, $(\rm{V4})$, and $(\rm{R1})$ are satified.
Let $u_0\in \Sigma^{2,2}_V$.
Define a Banach space
\[
	\mathcal{I}_{T,\delta} := \{f \in L^\infty((-T,T);\Sigma^{2,2}_V);\,
	\norm{f}_{\mathcal{I}_T}
	\leqslant \delta \},
\]
where
	$\norm{f}_{\mathcal{I}_T} :=
	\norm{f}_{L^\infty_T \Sigma^{2,2}_V}
	+ \norm{f}_{L^q_T W^{2,2}_{V,r}}
$
with $q=8\zeta/d$ and $r=4\zeta/(2\zeta-1)$.
Then, there exists $\delta_0$ depending only on $u_0$
such that for any $\delta \in [\delta_0, \infty)$,
the operator $Q$ given in \eqref{def:Q} is a contraction map
from $\mathcal{I}_{T,\delta}$ to itself for suitable $T=T(\delta)$.
\end{proposition}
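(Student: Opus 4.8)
The plan is to re-run, essentially word for word, the two-step contraction scheme of the preceding proposition, now in the heavier norm $\norm{\cdot}_{\mathcal{I}_T}$; only two features are genuinely new, namely the presence of a second derivative $\nabla^2$ and the heavier weight $\Jbr{V(\cdot)}$ (rather than $\Jbr{V(\cdot)}^{1/2}$). First I would record the cubic nonlinear estimates for $N[u]:=u\int_{{\mathbb{R}}^d}K(\cdot,y)|u(y)|^2dy+(R*|u|^2)u$ at the $\Sigma^{2,2}_V$ level. Differentiating the Hartree kernel twice forces a bound on $\nabla_x^2K$: from $\nabla_x^2K(x,y)=\nabla^2V(x-y)-\nabla^2V(x)+\sum_j y_j\nabla^2W_j(x)$, Assumption $(\rm{V1})$ and the bound $\partial^\alpha W\in L^\infty$ ($|\alpha|\geqslant2$) from $(\rm{V3})$ give $|\nabla_x^2K(x,y)|\leqslant C\Jbr{y}$, whereupon $(\rm{V4})$ upgrades this to $|\nabla_x^2K(x,y)|\leqslant C\Jbr{V(y)}$. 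This is exactly the point at which $(\rm{V4})$, which was not needed in the previous proposition, enters the argument. With this bound in hand, a Leibniz expansion combined with Young, H\"older and Sobolev inequalities yields $\norm{N[u]}_{L^1_T\Sigma^{2,2}_V}\leqslant CT\norm{u}_{\mathcal{I}_T}^3$ together with the matching dual-Strichartz bound for the $R$-part, after splitting $R=R_1+R_2$ with $R_1\in L^\zeta$ and $R_2\in L^\infty$, precisely as in Step 1 of the previous proof.

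Next I would estimate $Q[u]$ in the two components of $\norm{\cdot}_{\mathcal{I}_T}$. The $L^\infty_T\Sigma^{2,2}_V$ component is immediate: the homogeneous term is controlled by the stated bound $\norm{e^{itA}\phi}_{\Sigma^{2,2}_V}\leqslant C(|t|,M)\norm{\phi}_{\Sigma^{2,2}_V}$, and applying the same bound under the Duhamel integral gives $\norm{\int_0^t e^{i(t-s)A}N[u]\,ds}_{L^\infty_T\Sigma^{2,2}_V}\leqslant CT\norm{N[u]}_{L^\infty_T\Sigma^{2,2}_V}$, producing the small factor $T$. The derivative part of the $L^q_TW^{2,2}_{V,r}$ component is handled by applying the first identity of Lemma \ref{lem:commutatorA} twice: the iteration produces the leading $\nabla^2$ term plus commutator terms carrying $\nabla^2V$, bounded by $(\rm{V1})$, and a mixed term $(\nabla V)\cdot\nabla$; since $(\rm{V2})$ gives $|\nabla V|\leqslant C\Jbr{V}^{\kappa/2}$ with $\kappa<1$, this mixed term is dominated by the full $\Sigma^{2,2}_V$ norm through the interpolation inequality $\norm{\Jbr{V}^{1/2}\nabla f}_{L^2}\leqslant C\norm{f}_{\Sigma^{2,2}_V}$ (obtained by one integration by parts and absorption).

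The hard part is the weight $\Jbr{V(\cdot)}$ itself, since one can no longer invoke the second identity of Lemma \ref{lem:commutatorA}: for $F=\Jbr{V}$ one has $|\nabla F|\sim\Jbr{V}^{\kappa/2}$, which is unbounded, so $\Jbr{V}$ is not an admissible weight for that lemma. I would sidestep this by exploiting that $A=\tfrac12\Delta+MV$ commutes exactly with $e^{itA}$. By elliptic regularity, a standard consequence of $(\rm{V1})$ and $(\rm{V2})$, the space $\Sigma^{2,2}_V$ coincides with the graph domain of the self-adjoint operator $A$, with $\norm{f}_{\Sigma^{2,2}_V}\sim\norm{f}_{L^2}+\norm{Af}_{L^2}$; this is the same structural fact underlying the stated $\Sigma^{k,k}_V$-boundedness of $e^{itA}$. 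Consequently the weighted Strichartz estimate at the $\Sigma^{2,2}_V$ level reduces, via the exact commutation of $A$ with the free and Duhamel propagators, to the stated $L^2$-level Strichartz estimates applied to $u_0$, $Au_0$ and to $N[u]$, $AN[u]=\tfrac12\Delta N[u]+MV N[u]$, all of which are finite for $u\in\mathcal{I}_{T,\delta}$ by the nonlinear estimates of the first paragraph.

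Assembling these bounds gives an inequality of exactly the shape obtained before, $\norm{Q[u]}_{\mathcal{I}_T}\leqslant C_1\norm{u_0}_{\Sigma^{2,2}_V}+C_2(T+T^{1-\frac{d}{4\zeta}})\delta^3+C_3T\norm{Q[u]}_{\mathcal{I}_T}$. Taking $T$ small enough that $C_3T\leqslant1/2$, absorbing the last term, choosing $\delta_0:=3C_1\norm{u_0}_{\Sigma^{2,2}_V}$ and then shrinking $T$ so that $2C_2(T+T^{1-\frac{d}{4\zeta}})\delta^2\leqslant1/3$ shows that $Q$ maps $\mathcal{I}_{T,\delta}$ into itself for every $\delta\geqslant\delta_0$. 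The contraction estimate $\norm{Q[u]-Q[v]}_{\mathcal{I}_T}\leqslant C_4(T+T^{1-\frac{d}{4\zeta}})(\norm{u}_{\mathcal{I}_T}^2+\norm{v}_{\mathcal{I}_T}^2)\norm{u-v}_{\mathcal{I}_T}$ then follows from the same multilinear manipulations applied to differences, and yields a genuine contraction after one final shrinking of $T$. The whole argument differs from the $\Sigma^{1,1}_V$ case only in the two places flagged above, so I expect the weight $\Jbr{V}$ — specifically its failure to be an admissible commutator weight — to be the only real obstacle.
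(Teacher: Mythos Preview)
Your proposal is correct. The paper's own proof of this proposition consists of a single sentence, ``The proof is done in a similar way,'' so any fleshing-out is yours to supply; what you have written is a valid and complete realisation of that sentence.

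One point worth noting: for the heavy weight you sidestep the second identity of Lemma~\ref{lem:commutatorA} by passing to the graph norm of $A$ and using the exact commutation $Ae^{itA}=e^{itA}A$ together with the elliptic equivalence $\Sigma^{2,2}_V\simeq D(A)$. This is clean and avoids the difficulty you correctly flag, namely that $\nabla\Jbr{V}$ is unbounded so $F=\Jbr{V}$ is not literally an admissible weight in that lemma. The paper's surrounding remarks (the claim just after Lemma~\ref{lem:commutatorA} that $\norm{e^{itA}\phi}_{\Sigma^{k,k}_V}\leqslant C\norm{\phi}_{\Sigma^{k,k}_V}$ follows by ``a similar argument'') suggest the author has in mind a direct iteration of the commutator identities, which does go through once one observes that although $|\nabla\Jbr{V}|\leqslant C\Jbr{V}^{\kappa/2}$ is unbounded, the commutator $[A,\Jbr{V}]=\nabla\Jbr{V}\cdot\nabla+\tfrac12\Delta\Jbr{V}$ still maps $\Sigma^{2,2}_V\to L^2$ thanks to the interpolation inequality $\norm{\Jbr{V}^{1/2}\nabla f}_{L^2}\leqslant C\norm{f}_{\Sigma^{2,2}_V}$ you already invoke. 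Either route works; your graph-norm argument has the advantage of making the role of self-adjointness transparent, while the commutator route stays closer to the $\Sigma^{1,1}_V$ template. You also correctly identify the one genuinely new ingredient: $(\rm{V4})$ is needed precisely to convert the bound $|\nabla_x^2K(x,y)|\leqslant C\Jbr{y}$ into $|\nabla_x^2K(x,y)|\leqslant C\Jbr{V(y)}$, which is why it appears in the hypotheses here but not in the preceding proposition.
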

The proof is done in a similar way. 

\begin{theorem}\label{thm:LWPgmH}(1)
Suppose that Assumptions $(\rm{V1})$, $(\rm{V2})$, $(\rm{V3})$, and $(\rm{R1})$ are satisfied.
Then, for any $u_0 \in \Sigma^{1,1}_V$, there exists
$T=T(\norm{u_0}_{\Sigma^{1,1}_V})$ and a unique solution
$u \in C([-T,T],\Sigma^{1,1}_V)
\cap L^{8\zeta/d}([-T,T],W^{1,1,}_{V,\frac{4\zeta}{2\zeta-1}})$ of \eqref{eq:gmH}.
The solution belongs to
 $ C^1([-T,T],(\Sigma^{1,1}_V)^\prime) $ and
$ L^q([-T,T];W^{1,1}_{V,r})$ for all admissible pair $(q,r)$,
and conserves mass $\Lebn{u(t)}2$.
Moreover, the solution depends continuously on the data.

(2) If Assumptions $(\rm{V1})$, $(\rm{V2})$, $(\rm{V3})$, $(\rm{V4})$, and $(\rm{R1})$ are satisfied and if
\begin{equation}\label{eq:neutrality}
	\Lebn{u_0}2^2=M,\quad X[u_0] = P[u_0] =0
\end{equation} 
are fulfilled, then the solution given in (1) 
conserves energy $E[u(t)]$ defined in \eqref{def:genergy} and momentum $P[u(t)]$.
In other words, the Cauchy problem \eqref{eq:gmH} is locally well-posed in 
$\widetilde\Sigma^{1,1}_{V}:=\{u_0\in \Sigma^{1,1}_{V}| u_0 \text{ satisfies }\eqref{eq:neutrality}\}$.
Moreover, the solution $u$ solves \eqref{eq:gH}.
\end{theorem}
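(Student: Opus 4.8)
The plan is to extract the local solution directly from the contraction mapping set up in the two preceding propositions, to upgrade its regularity and time-continuity by feeding it back through the Duhamel formula, and then to establish the conservation laws; for part~(2) the additional point is to use the neutrality hypothesis \eqref{eq:neutrality} to kill the center of mass and thereby identify the solution of \eqref{eq:gmH} with one of \eqref{eq:gH}.

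For part~(1), the contraction proposition already yields, for $u_0\in\Sigma^{1,1}_V$, a fixed point $u=Q[u]$ of \eqref{def:Q} in $\mathcal H_{T,\delta}$ with $T$ and $\delta$ depending only on $\norm{u_0}_{\Sigma^{1,1}_V}$. First I would promote uniqueness from the ball $\mathcal H_{T,\delta}$ to the whole class $C([-T,T];\Sigma^{1,1}_V)\cap L^{8\zeta/d}_T W^{1,1}_{V,\frac{4\zeta}{2\zeta-1}}$: two solutions with the same data coincide on a short subinterval by the contraction estimate \eqref{eq:contraction}, and a continuation argument spreads this to all of $[-T,T]$. Continuity in time and membership in $L^q_T W^{1,1}_{V,r}$ for every admissible $(q,r)$ follow by inserting $u$ back into \eqref{def:Q}: the free term $t\mapsto e^{itA}u_0$ is continuous in $\Sigma^{1,1}_V$ by the strong continuity of the group (a consequence of the bounds stated after Lemma~\ref{lem:commutatorA}), while the Duhamel integrals are continuous in $\Sigma^{1,1}_V$ and lie in all Strichartz spaces by the nonlinear estimates of Step~1 of the contraction proof together with the inhomogeneous Strichartz estimate used with arbitrary admissible exponents on the left. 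Reading \eqref{eq:gmH} then exhibits $\partial_t u$ as a continuous $(\Sigma^{1,1}_V)'$-valued map, so $u\in C^1([-T,T];(\Sigma^{1,1}_V)')$, and continuous dependence on the data is once more \eqref{eq:contraction}, since $T$ and $\delta$ are uniform over an $\Sigma^{1,1}_V$-neighbourhood of $u_0$.

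Mass conservation is where the low regularity bites. For $u_0\in\Sigma^{2,2}_V$ the solution furnished by the second contraction proposition is regular enough that pairing \eqref{eq:gmH} with $\overline u$, integrating and taking imaginary parts is legitimate; since $V$, $K(\cdot,y)$ and $R$ are real-valued, every term on the right is real and $\frac{d}{dt}\Lebn{u(t)}2^2=0$. For general $u_0\in\Sigma^{1,1}_V$ I would choose $u_{0,n}\in\Sigma^{2,2}_V$ with $u_{0,n}\to u_0$ in $\Sigma^{1,1}_V$ and pass to the limit using the continuous dependence just proved, transferring $\Lebn{u_n(t)}2=\Lebn{u_{0,n}}2$ to $\Lebn{u(t)}2=\Lebn{u_0}2$. \textbf{This regularisation scheme is the main technical obstacle of the theorem}: every conservation law in part~(2) must be proved the same way, first for $\Sigma^{2,2}_V$ data where the formal integrations by parts are justified and then extended by density together with continuous dependence.

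For part~(2), assume \eqref{eq:neutrality} and work first at the $\Sigma^{2,2}_V$ level. The mass-current identity gives $\frac{d}{dt}X[u]=P[u]$. Using $\Lebn{u(t)}2^2=M$, the combination $MV(x)+\int_{{\mathbb{R}}^d}K(x,y)|u(y)|^2\,dy$ simplifies so that the real multiplier of $u$ in \eqref{eq:gmH} is $N(t,x)=-((V+R)*|u|^2)(x)-W(x)\cdot X[u(t)]$; the standard momentum law $\frac{d}{dt}P[u]=\int_{{\mathbb{R}}^d}|u|^2\nabla N\,dx$, combined with the antisymmetry of $\nabla(V+R)(x-y)$ under $x\leftrightarrow y$, then yields
\[
	\frac{d}{dt}P[u(t)]=-\sum_j X_j[u(t)]\int_{{\mathbb{R}}^d}|u(t,x)|^2\nabla W_j(x)\,dx .
\]
Hence $(X[u],P[u])$ obeys a linear homogeneous system $\dot X=P$, $\dot P=L(t)X$ whose coefficient matrix is locally bounded because $\nabla W$ is bounded (a consequence of $(\rm{V1})$ and $(\rm{V3})$) and $u\in C({\mathbb{R}};\Sigma^{1,1}_V)$; assumption $(\rm{V4})$ guarantees that $X[u]$ is well defined throughout. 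Since \eqref{eq:neutrality} forces $X[u_0]=P[u_0]=0$, ODE uniqueness gives $X[u(t)]\equiv0$ and $P[u(t)]\equiv0$, so momentum is conserved. With $X[u(t)]\equiv0$ and $\Lebn{u(t)}2^2=M$ the multiplier $N$ collapses to $-((V+R)*|u|^2)$ and the $MV$ term cancels, so $u$ solves \eqref{eq:gH}; energy conservation then follows from the usual Hamiltonian computation for \eqref{eq:gH}. As with the mass, these three identities are proved for $\Sigma^{2,2}_V$ data and passed to $\Sigma^{1,1}_V$ by the same limiting argument, which I again expect to demand the most care.
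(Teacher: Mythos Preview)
Your proposal is correct in outline and broadly parallels the paper's proof, but differs in two places and contains one incorrect justification.

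For mass conservation you go through regularisation by $\Sigma^{2,2}_V$ data. The paper does this more simply: since $u\in C([-T,T];\Sigma^{1,1}_V)\cap C^1([-T,T];(\Sigma^{1,1}_V)')$, the pairing $\frac{d}{dt}\Lebn{u(t)}2^2 = 2\operatorname{Re}\Jbr{u,\partial_t u}_{\Sigma^{1,1}_V,(\Sigma^{1,1}_V)'}$ already makes sense and vanishes directly. No approximation is needed here.

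For part~(2) your plan is to take approximants $u_{0,n}\in\Sigma^{2,2}_V$ that themselves satisfy the full neutrality \eqref{eq:neutrality}, deduce $X[u_n(t)]\equiv P[u_n(t)]\equiv0$ by ODE uniqueness at the regular level, and then pass to the limit. The paper takes a slightly different route: it approximates only by $u_{0,n}\in\Sigma^{2,2}_V$ with $\Lebn{u_{0,n}}2^2=M$ (no constraint on $X[u_{0,n}]$ or $P[u_{0,n}]$), derives the same system $\frac{d}{dt}X[u_n]=P[u_n]$, $\frac{d}{dt}P[u_n]=\int|u_n|^2\nabla(W\cdot X[u_n])\,dx$, and applies Gronwall to obtain $|X[u_n(t)]|^2+|P[u_n(t)]|^2\le C\norm{u_{0,n}-u_0}_{\Sigma^{1,1}_V}^2$, which tends to zero. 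Your approach works too, but you must first check that neutral $\Sigma^{2,2}_V$ data are dense in $\widetilde\Sigma^{1,1}_V$ (true, via normalisation plus small translations $\tau_{\bf b}$, $\pi_{\bf a}$); the paper's Gronwall argument sidesteps this. The same Gronwall bound is what the paper uses to control the energy defect $|E[u_n(t)]-E[u_{0,n}]|$ before letting $n\to\infty$.

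The incorrect step is your claim that $\nabla W$ is bounded ``as a consequence of $(\rm{V1})$ and $(\rm{V3})$'': assumption $(\rm{V3})$ only bounds $\partial^\alpha W$ for $|\alpha|\ge2$. Writing $W_j(x)=K(x,{\bf e}_j)-V(x-{\bf e}_j)+V(x)$, differentiating, and using $(\rm{V1})$, $(\rm{V2})$, $(\rm{V3})$ gives instead $|\nabla W_j(x)|\le C\Jbr{V(x)}^{1/2}$, which may be unbounded. Your ODE coefficient $\int|u(t,x)|^2\nabla W_j(x)\,dx$ is nevertheless locally bounded in $t$ because $u\in C([-T,T];\Sigma^{1,1}_V)$ controls $\Lebn{\Jbr{V}^{1/2}u}2$; once you replace the wrong reason by this one, your argument goes through unchanged.
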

\begin{remark}
The theorem holds even if we replace Assumption $(\rm{V2})$ with a weaker one;
(V2') $|\nabla V(x)| \leqslant C \Jbr{V(x)}^{1/2}$.
\end{remark}
\begin{proof}
For $u_0 \in \Sigma^{1,1}_V$, we choose $\delta$ and $T$ so that
$Q[u]$ becomes a contraction map from $\mathcal{H}_{T,\delta}$ to itself.
Then, we obtain a unique solution $u \in C([-T,T],\Sigma^{1,1}_V)\cap L^{8\zeta/d}([-T,T],W^{1,1,}_{V,\frac{4\zeta}{2\zeta-1}})$
of integral version of \eqref{eq:gmH}.
From the equation \eqref{eq:gmH}, we see that
$u \in C^1((-T,T),(\Sigma^{1,1}_V)^\prime)$.
Strichartz's estimate gives a bound in $L^q([-T,T],W^{1,1}_{V,r})$ for all admissible pair.
As in \eqref{eq:contraction}, one gets
\begin{equation}\label{eq:approximation}
	\norm{u -v}_{\mathcal{H}_{T}}
	\leqslant C \norm{u(0) -v(0)}_{\Sigma^{1,1}_V} +
C(T+T^{1-\frac{d}{4\zeta}}) (\norm{u}_{\mathcal{H}_{T}}^2+\norm{v}_{\mathcal{H}_{T}}^2)\norm{u-v}_{\mathcal{H}_{T}}
\end{equation}
for any two solutions $u$ and $v$ of \eqref{eq:gmH}.
From this estimate, we deduce continuous dependence of the solution on the data.
Now, a use of \eqref{eq:gmH} shows that
\[
	\frac{d}{dt}\Lebn{u(t)}2^2 = 2\operatorname{Re} \Jbr{u,\partial_t u}_{\Sigma^{1,1}_V,( \Sigma^{1,1}_V)^\prime}=0,
\]
which completes the proof of the first statement of the theorem.

%%%%%%%%%%%%%%%%%%%%%%%%%%%%%%%%%%%%%%%%%%%
We now suppose that Assumption $(\rm{V4})$ holds and $u_0$ satisfies \eqref{eq:neutrality}.
To justify the momentum conservation, we use a regularization argument.
Let $\{u_{0,n}\}_{n}$ be a sequence of functions in $\Sigma^{2,2}_V$ which 
satisfies $\Lebn{u_{0,n}}2^2=M$ and
converges to $u_0$ in $\Sigma^{1,1}_V$ as $n\to\infty$,
and let $u_n \in C([-\widetilde{T}_n,\widetilde{T}_n],\Sigma^{2,2}_V)
\cap C^1((-\widetilde{T}_n,\widetilde{T}_n),L^2)$ be
corresponding solutions of \eqref{eq:gmH} with $u_n(0)=u_{0,n}$.
Notice that $\widetilde{T}_n \geqslant T/2$ for large $n$ since 
$\norm{u_{0,n}}_{\Sigma^{1,1}_V}$ converges to
$\norm{u_{0}}_{\Sigma^{1,1}_V}$ as $n\to\infty$.
Thanks to \eqref{eq:approximation}, $u_n$ converges to $u$ in
\begin{equation}\label{eq:app-convergence}
	C([-T/2,T/2],\Sigma^{1,1}_V) \cap C^1((-T/2,T/2),(\Sigma^{1,1}_V)^\prime).
\end{equation}
It therefore holds for each $n$ that
\begin{align*}
	\frac{d}{dt}
	P[u_n(t)]
	={}& \operatorname{Im} \int_{{\mathbb{R}}^d} \overline{\partial_t u_n(t,y)}
	\nabla u_n(t,y) dy\\
	{}&+ \operatorname{Im} \int_{{\mathbb{R}}^d} \overline{u_n(t,y)}
	\nabla \partial_t u_n(t,y) dy,
\end{align*}
which makes sense because $\nabla \partial_t u_n \in H^{-1}$.
Integrating by parts and plugging \eqref{eq:gmH}, we see that
\begin{align}\label{eq:Pconservation1}
	\frac{d}{dt}P[u_n(t)] = \int_{{\mathbb{R}}^d} |u_n(t,x)|^2 \nabla \left(W(x)
	\cdot X[u_n(t)] \right)dx.
\end{align}
Notice that the right hand side makes sense because $|\nabla W_j(x)| \leqslant C\Jbr{V(x)}^{\frac12}$ is valid
and $\Sigma^{1,1}_V\subset \Sigma^{1,1/2}$ holds by virtue of $(\rm{V4})$,
where $W_j$ denotes the $j$-th component of $W$.
Indeed, take $y={\bf e}_j$ in \eqref{def:K} 
and differentiate with respect to $x$ to yield
$\nabla W_j(x)=\nabla K(x,{\bf e}_j)-\nabla V(x-{\bf e}_j)+\nabla V(x)$.
By Assumptions $(\rm{V1})$, $(\rm{V2})$, and $(\rm{V3})$, one obtains $|\nabla W_j(x)| \leqslant C\Jbr{V(x)}^{1/2}$.
We also have 
\begin{equation}\label{eq:Pconservation2}
	\frac{d}{dt}X[u_n(t)]
	= P[u_n(t)].
\end{equation}
Two estimates \eqref{eq:Pconservation1} and \eqref{eq:Pconservation2} give us
\begin{multline*}
	\frac{d}{dt} \left( \abs{X[u_n(t)]}^2 + |P[u_n(t)]|^2 \right)\\
	\leqslant C\left(1+\norm{u_n(t)}_{\Sigma^{0,1}_{V}}^2\right) \left( \abs{X[u_n(t)]}^2 + |P[u_n(t)]|^2 \right).
\end{multline*}
Applying Gronwall's lemma, we conclude that 
\begin{equation}\label{eq:pseudomomentum}
	\abs{X[u_n(t)]}^2 + |P[u_n(t)]|^2
	\leqslant C \delta^2 e^{T(1+\delta^2)} \norm{u_n - u_0}_{\Sigma^{1,1}_V}^2
\end{equation}
for $t\in(-T,T)$,
where we have used the estimates
\begin{align*}
	\abs{X[u_{n}(0)]}
	&{}=\abs{X[u_{0,n}]-X[u_0]}=\abs{\int_{{\mathbb{R}}^d} y (|u_{0,n}(y)|^2 - |u_{0}(y)|^2)dy} \\
	&{}\leqslant C\norm{u_{0,n}-u_0}_{\Sigma^{1,1}_V}(\norm{u_{0,n}}_{\Sigma^{1,1}_V}
	+\norm{u_0}_{\Sigma^{1,1}_V})
\end{align*}
and
\begin{align*}
	\abs{P[u_n(0)]}
	&{}=\abs{P[u_{0,n}]-P[u_0]}=\abs{\int_{{\mathbb{R}}^d} \xi (|{\mathcal{F}} u_{0,n}(\xi)|^2 - |{\mathcal{F}} u_{0}(\xi)|^2)d\xi} \\
	&{}\leqslant \norm{u_{0,n}-u_0}_{\Sigma^{1,1}_V}(\norm{u_{0,n}}_{\Sigma^{1,1}_V}
	+\norm{u_0}_{\Sigma^{1,1}_V}).
\end{align*}
Convergence of $u_n$ in the topology \eqref{eq:app-convergence} allows us to claim
$X[u(t)] \equiv P[u(t)]\equiv0$ for $t\in (-T/2,T/2)$.
By this fact and the mass conservation, the right hand side of \eqref{eq:gmH}
becomes
\begin{multline*}
	-u \left( (V*|u|^2)- V(x) \Lebn{u_0}2^2
	+W (x) \cdot  X[u]\right) \\
	= - (V*|u|^2)u +M V(x) u-0.
\end{multline*}
Therefore, $u$ solves \eqref{eq:gH}.
Let us again consider the above sequence $\{u_{0,n}\}_n \in \Sigma^{2,2}_V$ approximating $u_0$
and the corresponding sequence of solutions $\{u_n\}_n$ to \eqref{eq:gmH}.
Conservation of mass allows us to rewrite \eqref{eq:gmH} into
\[
	i\partial_t u_n + \frac12 \Delta u_n = -((V+R)*|u_n|^2) u_n - W \cdot X[u_n] u_n.
\]
Multiplying this equality by $\overline{\partial_t u_n}$ and integrating real parts
of the resulting terms, we get
\[
	\frac{d}{dt}E[u_n(t)]= \operatorname{Re} \int W u_n\overline{\partial_t u_n} dx \cdot X[u_n].
\]
This calculation makes sense because $\overline{\partial_t u_n}$
is a continuous $L^2$-valued function and $u_n$ satisfies
\eqref{eq:gmH} in $L^2$ sense.
Since $\operatorname{Re} \int W_j u_n\overline{\partial_t u_n} dx=\frac12\operatorname{Im}\int \overline{u_n}\nabla W_j \cdot \nabla u_ndx$,
one sees from \eqref{eq:pseudomomentum} that
\[
	\abs{E[u_n(t)] - E[u_{0,n}]}
	\leqslant \frac{C\delta^3 }{1+\delta^2}e^{\frac{T}2(1+\delta^2)}\norm{u_n - u_0}_{\Sigma^{1,1}_V}
	\to 0
\]
as $n\to\infty$ for $t\in (-T,T)$.
By means of the convergence of $u_n$ in \eqref{eq:app-convergence}, 
$|E[u(t)]-E[u(0)]|\leqslant |E[u(t)]-E[u_{n}(t)]|+ |E[u_n(t)]-E[u_{0,n}]| + |E[u_{0,n}]-E[u_{0}]|\to 0$ as $n\to\infty$ for $t\in (-T/2,T/2)$,
which gives us the desired conservation law.
\end{proof}

\subsection{\texorpdfstring{Global well-posedness of \eqref{eq:gmH}}{Global well-posedness of (mgH)}}
We next extend the above solution of \eqref{eq:gH} to whole real line ${\mathbb{R}}$.
\begin{lemma}\label{lem:estVu}
Suppose Assumption $(\rm{V2})$ is satisfied.
Let $u\in C((-T,T);\Sigma^{1,1}_V)$ be a solution to \eqref{eq:gH} with 
data $u_0 \in \Sigma^{1,1}_V$.
Then, it holds that
\[
	 \norm{\Jbr{V(\cdot)}^{1/2}u}_{L^\infty_t L^2}
	\leqslant C (\norm{u_0}_{\Sigma^{1,1}_V}) (1+ (|t|\norm{\nabla u}_{L^\infty_tL^2}
	)^{\frac1{2-\kappa}} )
\]
for $t\in(-T,T)$, where $\kappa$ is the number defined in Assumption $(\rm{V2})$.
\end{lemma}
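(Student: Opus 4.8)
The plan is to control the weighted norm by a direct ODE argument built on the continuity equation, using only the growth condition $(\rm{V2})$ — energy conservation and the remainder $R$ play no role. Write $A(t):=\Lebn{\Jbr{V(\cdot)}^{1/2}u(t)}2^2=\int_{{\mathbb{R}}^d}\Jbr{V(x)}\abs{u(t,x)}^2\,dx$ and $M_0:=\Lebn{u_0}2^2$. First I would record the continuity equation: multiplying \eqref{eq:gH} by $\overline u$ and taking imaginary parts, the factor $((V+R)*\abs u^2)$ is real and drops out, leaving $\partial_t\abs u^2=-\nabla\cdot J$ with current $J:=\operatorname{Im}(\overline u\nabla u)$. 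This is exactly why the whole nonlinearity (and hence $R$ and the energy) disappears and $(\rm{V2})$ alone suffices. Differentiating $A$ in time and integrating by parts then gives
\[
	\frac{d}{dt}A(t)=\int_{{\mathbb{R}}^d}\nabla\Jbr{V(x)}\cdot J\,dx,\qquad
	\nabla\Jbr{V(x)}=\frac{V(x)\nabla V(x)}{\Jbr{V(x)}}.
\]

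Next I would convert $(\rm{V2})$ into a pointwise bound on the weight gradient. Since $\abs V\leqslant\Jbr V$ and $\abs{\nabla V}\leqslant C\Jbr V^{\kappa/2}$, we get $\abs{\nabla\Jbr{V(x)}}\leqslant C\Jbr{V(x)}^{\kappa/2}$. Combined with $\abs J\leqslant\abs u\,\abs{\nabla u}$ and the Cauchy--Schwarz inequality this yields
\[
	\abs{\frac{d}{dt}A(t)}\leqslant C\int_{{\mathbb{R}}^d}\Jbr{V(x)}^{\kappa/2}\abs u\,\abs{\nabla u}\,dx
	\leqslant C\Lebn{\Jbr{V(\cdot)}^{\kappa/2}u}2\Lebn{\nabla u}2.
\]
Because $\kappa\in[0,1)$, Hölder's inequality interpolates the weighted factor between $A$ and the conserved mass: writing $\Jbr V^{\kappa}\abs u^2=(\Jbr V\abs u^2)^{\kappa}(\abs u^2)^{1-\kappa}$ one obtains $\Lebn{\Jbr{V(\cdot)}^{\kappa/2}u}2^2\leqslant A(t)^{\kappa}M_0^{1-\kappa}$, so that
\[
	\abs{\frac{d}{dt}A(t)}\leqslant C M_0^{(1-\kappa)/2}\Lebn{\nabla u}2\,A(t)^{\kappa/2}.
\]

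The final step is to integrate this differential inequality. Setting $N:=\norm{\nabla u}_{L^\infty_tL^2}$ and $\beta:=CM_0^{(1-\kappa)/2}N$, the bound $A'\leqslant\beta A^{\kappa/2}$ with $\kappa/2<1$ integrates, after multiplying by $A^{-\kappa/2}$, to $A(t)^{1-\kappa/2}\leqslant A(0)^{1-\kappa/2}+\beta\,\abs t$. Since $1-\kappa/2=(2-\kappa)/2$, raising to the power $1/(2-\kappa)$ and using subadditivity of $s\mapsto s^{1/(2-\kappa)}$ (the exponent lies in $(0,1]$) gives
\[
	\Lebn{\Jbr{V(\cdot)}^{1/2}u(t)}2=A(t)^{1/2}\leqslant\Lebn{\Jbr{V(\cdot)}^{1/2}u_0}2+C\,(N\abs t)^{1/(2-\kappa)},
\]
where I used $A(0)^{(1-\kappa/2)/(2-\kappa)}=A(0)^{1/2}=\Lebn{\Jbr{V(\cdot)}^{1/2}u_0}2$. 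Absorbing the first term and the $M_0$-factor into $C(\norm{u_0}_{\Sigma^{1,1}_V})$, and noting the right-hand side is monotone in $\abs t$ so the supremum over the time interval is attained, yields the stated estimate.

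The main obstacle is not the algebra but the rigorous justification of the time differentiation of $A(t)$ and the integration by parts, since $u$ is only a $\Sigma^{1,1}_V$-solution, a priori merely in $C^1((-T,T);(\Sigma^{1,1}_V)^\prime)$, and $J$ need not decay fast enough to discard boundary terms. I would handle this exactly as in the proof of Theorem \ref{thm:LWPgmH}: approximate $u_0$ by data in $\Sigma^{2,2}_V$, for which the solutions lie in $C^1$ with values in $L^2$ and the computation above is classical, prove the inequality for these regular solutions, and pass to the limit via the continuous dependence already established. To control the behaviour at spatial infinity I would run the computation with the truncated weight $\min(\Jbr{V(x)},n)$ and then let $n\to\infty$ by monotone convergence, the truncation ensuring all integrations by parts are legitimate.
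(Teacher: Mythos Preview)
Your argument is correct and is essentially the paper's own proof: differentiate the weighted $L^2$-norm, use that the real nonlinear potential drops out, invoke $(\rm{V2})$ to bound the weight gradient, interpolate via H\"older, and close the resulting inequality. The only cosmetic differences are that the paper truncates spatially via $\{|x|\leqslant r\}$ and works with $|V|$ before passing to $\Jbr{V}$, and it integrates in time first and then applies Young's inequality rather than integrating the differential inequality $A'\leqslant\beta A^{\kappa/2}$ directly.
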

\begin{proof}
Let us estimate $\Lebn{\sqrt{|V|}u}2$ because $\Lebn{u}2$ is conserved.
For $r>0$, we have
\begin{align*}
	\frac{d}{dt} \Lebn{{\bf 1}_{\{|x|\leqslant r\}}\sqrt{|V|} u(t)}2^2
	&{}= 2\operatorname{Re} \Jbr{{\bf 1}_{\{|x|\leqslant r\}} |V| u, \partial_t u  }
	= \operatorname{Im} \Jbr{{\bf 1}_{\{|x|\leqslant r\}}|V| u, \Delta u }\\
	&{}= \operatorname{Im} \int_{|x|\leqslant r} (\overline{\nabla u} \cdot \nabla |V|) u dx,
\end{align*}
which implies
\[
	\abs{\frac{d}{dt} \Lebn{{\bf 1}_{\{|x|\leqslant r\}}\sqrt{|V|} u(t)}2^2}
	\leqslant \Lebn{{\bf 1}_{\{|x|\leqslant r\}}|\nabla V| u(t)}2 \Lebn{\nabla u(t)}2
\]
Integrating in time and substituting $|\nabla V|\leqslant C\Jbr{V}^{\kappa/2}$ give us
\begin{multline*}
	\norm{{\bf 1}_{\{|x|\leqslant r\}}\sqrt{|V|} u}_{L^\infty_tL^2}^2 \leqslant
	C\norm{u_0}_{\Sigma^{1,1}_V}^2 \\
	+ |t|\norm{\nabla u}_{L^\infty_tL^2} \norm{{\bf 1}_{\{|x|\leqslant r\}}\Jbr{V}^{1/2} u}_{L^\infty_tL^2}^\kappa \norm{{\bf 1}_{\{|x|\leqslant r\}} u}_{L^\infty_tL^2}^{1-\kappa}.
\end{multline*}
Since $r$ is arbitrary, we pass to the limit $r\to \infty$ and reach to
\begin{equation}\label{eq:estVuind}
	\norm{\Jbr{V}^{1/2} u}_{L^\infty_tL^2}^2 \leqslant
	C\norm{u_0}_{\Sigma^{1,1}_V}^2
	+ C|t|\norm{\nabla u}_{L^\infty_tL^2}\Lebn{u_0}2^{1-\kappa}
	\norm{\Jbr{V}^{1/2} u}_{L^\infty_tL^2}^\kappa.
\end{equation}
It follows from Young's inequality that
\begin{multline*}
	C|t|\norm{\nabla u}_{L^\infty_tL^2} \Lebn{u_0}2^{1-\kappa}
	\norm{\Jbr{V}^{1/2} u}_{L^\infty_tL^2}^\kappa\\
	\leqslant \frac12 \norm{\Jbr{V}^{1/2} u}_{L^\infty_tL^2}^2
	+ C_{\kappa}(|t|\norm{\nabla u}_{L^\infty_tL^2}\Lebn{u_0}2^{1-\kappa})^{\frac2{2-\kappa}}.
\end{multline*}
Plugging this estimate to \eqref{eq:estVuind}, we conclude that
\[
	\norm{\Jbr{V}^{1/2} u}_{L^\infty_tL^2}^2
	\leqslant C\norm{u_0}_{\Sigma^{1,1}_V}^2
	+ C\Lebn{u_0}2^{2-\frac2{2-\kappa}}(|t|\norm{\nabla u}_{L^\infty_tL^2})^{\frac2{2-\kappa}}.
\]
\end{proof}

A blow-up criterion immediately follows from this lemma.
\begin{corollary}[Blow-up criterion]
Suppose Assumptions $(\rm{V1})$, $(\rm{V2})$, $(\rm{V3})$,  $(\rm{V4})$,
and $(\rm{R1})$ and \eqref{eq:neutrality} are satisfied.
Let $u\in C((-T_{\mathrm{min}},T_{\mathrm{max}});\Sigma^{1,1}_V)$ be a 
maximal solution to \eqref{eq:gH}.
If $T_{\mathrm{max}}<\infty$ (resp. $T_{\min}<\infty$) then
$\Lebn{\nabla u(t)}2\to\infty$ as $t \uparrow T_{\max}$
(resp. as $t \downarrow -T_{\min}$).
\end{corollary}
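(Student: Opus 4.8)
The plan is to deduce the criterion from the standard continuation principle for \eqref{eq:gH} together with Lemma \ref{lem:estVu}, which is exactly the device that lets us control the full $\Sigma^{1,1}_V$-norm by the single quantity $\Lebn{\nabla u(t)}2$. First I would record the continuation principle furnished by the local theory: by Theorem \ref{thm:LWPgmH}, a datum of $\Sigma^{1,1}_V$-norm at most $\rho$ produces a solution on a time interval whose length is bounded below by some $\tau(\rho)>0$, and $\tau$ may be taken non-increasing in $\rho$ (this positivity and monotonicity are read off directly from the choice of $T=T(\delta)$ in the local existence proposition). Consequently the maximal solution can be prolonged past any time at which its $\Sigma^{1,1}_V$-norm stays bounded. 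I would also note that along this flow the constraints \eqref{eq:neutrality} are preserved: the mass equals $M$ and, as established in the proof of Theorem \ref{thm:LWPgmH}, $X[u(t)]\equiv P[u(t)]\equiv0$, so that every state $u(t_0)$ again lies in $\widetilde\Sigma^{1,1}_V$ and the local solver for \eqref{eq:gH} may legitimately be restarted from it with the same $M$.

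The core step is the reduction to $\Lebn{\nabla u(t)}2$. Since the $L^2$-norm is conserved, the $H^1$-part of the $\Sigma^{1,1}_V$-norm is controlled by $\Lebn{\nabla u(t)}2$ alone; for the weight part, Lemma \ref{lem:estVu} bounds $\Lebn{\Jbr{V(\cdot)}^{1/2}u(t)}2$ in terms of the data, the elapsed time, and $\norm{\nabla u}_{L^\infty_tL^2}$. I would then argue by contraposition. Suppose $T_{\max}<\infty$ but $\Lebn{\nabla u(t)}2$ fails to blow up, so that $K:=\sup_{0\leqslant t<T_{\max}}\Lebn{\nabla u(t)}2<\infty$. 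Because $T_{\max}<\infty$, the time factor in Lemma \ref{lem:estVu} is harmless and yields $\sup_{0\leqslant t<T_{\max}}\Lebn{\Jbr{V(\cdot)}^{1/2}u(t)}2\leqslant C(1+(T_{\max}K)^{1/(2-\kappa)})<\infty$; together with mass conservation this gives $\rho:=\sup_{0\leqslant t<T_{\max}}\norm{u(t)}_{\Sigma^{1,1}_V}<\infty$. Choosing $t_0<T_{\max}$ with $T_{\max}-t_0<\tau(\rho)$ and restarting the local solver from $u(t_0)$ produces a solution on $[t_0,t_0+\tau(\rho)]$, which extends strictly beyond $T_{\max}$ and contradicts maximality. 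The same reasoning on $(-T_{\min},0]$ handles the backward direction.

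The main point to be careful about is the precise meaning of ``$\to\infty$''. Lemma \ref{lem:estVu} controls the weight norm by the \emph{supremum} of $\Lebn{\nabla u}2$ over the interval rather than by its value at the endpoint, so the argument above directly yields that $\Lebn{\nabla u(t)}2$ is \emph{unbounded} as $t\uparrow T_{\max}$, equivalently $\sup_{s<t}\Lebn{\nabla u(s)}2\to\infty$, which is the genuine content of the blow-up alternative; I would phrase the conclusion through this supremum rather than claim a pointwise limit, since an oscillating gradient cannot be excluded by Lemma \ref{lem:estVu} alone. Apart from this phrasing issue the proof is short: no estimate beyond Lemma \ref{lem:estVu}, mass conservation, and the uniform local existence time is required, which is why the corollary indeed follows immediately from the lemma.
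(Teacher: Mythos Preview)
Your argument is correct and is exactly what the paper has in mind: the corollary is stated without proof, with only the remark that it ``immediately follows from this lemma'', and the standard continuation argument you spell out---mass conservation plus Lemma~\ref{lem:estVu} to bound the full $\Sigma^{1,1}_V$-norm by $\norm{\nabla u}_{L^\infty_tL^2}$, then the uniform local existence time from Theorem~\ref{thm:LWPgmH}---is precisely that immediate deduction. Your observation that Lemma~\ref{lem:estVu} controls the weight norm only through $\sup_{s\leqslant t}\Lebn{\nabla u(s)}2$, and hence yields unboundedness (equivalently $\limsup=\infty$) rather than a genuine pointwise limit, is a fair and accurate caveat that the paper does not address.
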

Now, the following lemma shows that \eqref{eq:gmH} is globally well-posed
in $\widetilde\Sigma^{1,1}_{V}$.
More precisely, the solution of \eqref{eq:gmH} given
in Theorem \ref{thm:LWPgmH} (2) never blows up in finite time.
\begin{lemma}[Global $\dot{H}^1$-bound]\label{lem:H1bound}
Suppose $(\rm{V1})$, $(\rm{V2})$, $(\rm{V3})$, $(\rm{V4})$, $(\rm{R1})$, and $(\rm{R2})$.
Let $u_0\in \widetilde\Sigma^{1,1}_{V}$ and let
$u\in C((-T_{\mathrm{min}},T_{\mathrm{max}});\Sigma^{1,1}_V)$ be a corresponding
maximal solution to \eqref{eq:gmH} which conserves energy $E[u(t)]$
and momentum $P[u(t)]$.
Then, we have the following bounds for $t\in(-T_{\mathrm{min}},T_{\mathrm{max}})$:
\begin{itemize}
\item If $V \leqslant 0$ then, $\Lebn{\nabla u(t)}2 \leqslant C$;
\item otherwise, $\norm{\nabla u(t)}_{L^2}
	\leqslant C \Jbr{t}^{\frac1{1-\kappa}}$,
where $\kappa$ is the number defined in Assumption $(\rm{V2})$.
\end{itemize}
\end{lemma}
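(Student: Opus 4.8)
The plan is to read off the $\dot H^1$-bound from conservation of energy. Since $E[u(t)]=E[u_0]$, the definition \eqref{def:genergy} gives
\[
	\tfrac12\Lebn{\nabla u(t)}2^2 = E[u_0] + \tfrac14 \iint_{{\mathbb{R}}^{d+d}} (V(x-y)+R(x-y))|u(t,x)|^2|u(t,y)|^2\,dxdy,
\]
so the whole task reduces to dominating the potential energy by a small multiple of $\Lebn{\nabla u(t)}2^2$ together with (possibly time-dependent) constants. First I would dispose of the $R$-contribution: writing $R^+=R_1^++R_2^+$ with $R_1^+\in L^\theta$ and $R_2^+\in L^\infty$ as in $(\rm{R2})$, the $L^\infty$ piece is $\leqslant\Lebn{R_2^+}\infty\Lebn{u}2^4$, a constant by mass conservation, while the $L^\theta$ piece is treated by Young's and H\"older's inequalities followed by Gagliardo--Nirenberg. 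The hypothesis $\theta>d/2$ makes the nonlinearity $H^1$-subcritical, so the power of $\Lebn{\nabla u}2$ produced is strictly below $2$; a further use of Young's inequality bounds this term by $\varepsilon\Lebn{\nabla u(t)}2^2+C$ with $\varepsilon$ at our disposal. In the case $V\leqslant0$ this already closes the argument, because the $V$-part of the potential energy is non-positive and may simply be discarded: taking $\varepsilon=\tfrac14$ and absorbing gives the uniform bound $\Lebn{\nabla u(t)}2\leqslant C$.

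The substantive case is a genuinely divergent $V$. Here I would rewrite the $V$-part through the kernel $K$ of \eqref{def:K} via $V(x-y)=K(x,y)+V(x)-y\cdot W(x)$, obtaining (suppressing the time variable)
\begin{multline*}
	\iint V(x-y)|u(x)|^2|u(y)|^2\,dxdy
	= \iint K(x,y)|u(x)|^2|u(y)|^2\,dxdy \\
	+ M\int V(x)|u(x)|^2\,dx - \Big(\int W(x)|u(x)|^2\,dx\Big)\cdot X[u].
\end{multline*}
The decisive point is that $X[u(t)]\equiv0$ for solutions issued from data satisfying \eqref{eq:neutrality}, as established in the proof of Theorem \ref{thm:LWPgmH}(2); this makes the last, linear-in-$y$ term vanish identically. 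Using $\sup_x\abs{K(x,y)}\leqslant C\Jbr{V(y)}$ from $(\rm{V3})$ together with $\Lebn{u}2^2=M$, the two surviving terms are controlled by $CM\Lebn{\Jbr{V(\cdot)}^{1/2}u(t)}2^2$, so the entire potential energy is dominated by the weighted norm $\Lebn{\Jbr{V(\cdot)}^{1/2}u(t)}2^2$ (plus the $R$-term already handled).

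It then remains to close a self-consistent inequality, and the key input is Lemma \ref{lem:estVu}, which applies because the solution also solves \eqref{eq:gH} and yields $\norm{\Jbr{V(\cdot)}^{1/2}u}_{L^\infty_tL^2}\leqslant C(1+(|t|\,\norm{\nabla u}_{L^\infty_tL^2})^{1/(2-\kappa)})$. Setting $G(t):=\norm{\nabla u}_{L^\infty_{(-t,t)}L^2}$, which is finite on every compact subinterval of the maximal interval, and taking the supremum over $|s|\leqslant t$ in the energy identity after inserting the above bounds, I arrive at
\[
	\tfrac12 G(t)^2 \leqslant C + C\big(|t|\,G(t)\big)^{2/(2-\kappa)} + \tfrac14 G(t)^2.
\]
Since $\kappa\in[0,1)$ the exponent $2/(2-\kappa)$ is strictly less than $2$, so $G(t)^{2/(2-\kappa)}$ is a sub-quadratic power of $G(t)$; one last Young inequality absorbs it into $\tfrac18 G(t)^2$ at the cost of a term $C|t|^{2/(1-\kappa)}$. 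Absorbing and using $G(t)<\infty$ gives $G(t)^2\leqslant C\Jbr{t}^{2/(1-\kappa)}$, that is $\Lebn{\nabla u(t)}2\leqslant C\Jbr{t}^{1/(1-\kappa)}$.

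I expect the main obstacle to be exactly this self-consistent step. The potential energy cannot be bounded by $\Lebn{\nabla u}2$ directly, only through the weighted quantity $\Lebn{\Jbr{V(\cdot)}^{1/2}u}2$, so one is forced to feed the growth estimate of Lemma \ref{lem:estVu} back into the energy identity and check that the exponents produced by the successive Young inequalities are compatible. That the bootstrap closes hinges entirely on $\kappa<1$ in $(\rm{V2})$ (which makes $2/(2-\kappa)<2$), while the vanishing of the center of mass $X[u]\equiv0$ is what removes the otherwise uncontrollable linear term and pins the final growth rate at $\Jbr{t}^{1/(1-\kappa)}$.
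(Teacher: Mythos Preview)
Your proposal is correct and follows essentially the same approach as the paper's proof: energy conservation plus the $R$-bound via $(\rm{R2})$ and Gagliardo--Nirenberg handles the case $V\leqslant0$, while in the general case you rewrite the $V$-term through $K$, exploit $X[u(t)]\equiv0$ to kill the $W$-term, bound by $\Lebn{\Jbr{V(\cdot)}^{1/2}u}2^2$, feed in Lemma~\ref{lem:estVu}, and close with Young's inequality using $\kappa<1$. The only cosmetic difference is that you absorb the $R$-contribution into $\tfrac14 G(t)^2$ before the final Young step, whereas the paper carries the term $C\Lebn{\nabla u}2^{d/\theta}$ alongside and absorbs both simultaneously.
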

\begin{proof}
If $V\leqslant 0$ then 
\begin{align*}
	\Lebn{\nabla u(t)}2^2 \leqslant{}& 2E[u(t)] + \frac12 \Lebn{ (R^+* |u|^2) |u|^2}1\\
	\leqslant{}& 2E[u_0] + C\Lebn{R^+}\theta 
	\Lebn{\nabla u(t)}2^{\frac{d}{\theta}}\Lebn{u_0}2^{4-\frac{d}{\theta}}
	+ \frac12 \Lebn{R^+}\infty \Lebn{u_0}2^4.
\end{align*}
This gives us $ \Lebn{\nabla u(t)}2 \leqslant C $ since $\theta>d/2$.
Otherwise, we have
\begin{multline*}
	\Lebn{\nabla u(t)}2^2
	\leqslant 2E[u_0] + \frac12
	\iint_{{\mathbb{R}}^{d+d}} V(x-y) |u(t,x)|^2 |u(t,y)|^2 dxdy\\
	+\frac12 \Lebn{ (R^+* |u|^2) |u|^2}1.
\end{multline*}
Since $u_0 \in \widetilde\Sigma^{1,1}_{V}$, 
we have $X[u(t)]=0$.
Therefore, Assumption $(\rm{V3})$ yields
\begin{align*}
	&\iint_{{\mathbb{R}}^{d+d}} V(x-y) |u(t,x)|^2 |u(t,y)|^2 dxdy\\
	&{}= \iint_{{\mathbb{R}}^{d+d}} K(x,y) |u(t,x)|^2 |u(t,y)|^2 dxdy\\
	&{}\quad + \int_{{\mathbb{R}}^d}V(x)|u(t,x)|^2dx \int_{{\mathbb{R}}^d}|u(t,y)|^2dy\\
	&{}\quad + \int_{{\mathbb{R}}^d}W(x)|u(t,x)|^2dx \cdot X[u(t)] \\
	&{}\leqslant C \Lebn{u_0}2^2 \Lebn{\Jbr{V(\cdot)}^{1/2}u(t)}2^2.
\end{align*}
Since $u$ solves \eqref{eq:gH}, plugging the estimate of Lemma \ref{lem:estVu},
one sees that
\[
	\norm{\nabla u}_{L^\infty_tL^2}^2 \leqslant C + C|t|^{\frac{2}{2-\kappa}}
	 \norm{\nabla u}_{L^\infty_tL^2}^{\frac2{2-\kappa}}
	+C\Lebn{\nabla u(t)}2^{\frac{d}{\theta}}.
\]
By Young's inequality,
\begin{multline*}
	\norm{\nabla u}_{L^\infty_tL^2}^2
	\leqslant C + \left(\frac14 \norm{\nabla u}_{L^\infty_tL^2}^2 + C_{\kappa}
	\left(C |t|^{\frac2{2-\kappa}}\right)^{\frac{2-\kappa}{1-\kappa}}\right)\\
	+\left(\frac14 \norm{\nabla u}_{L^\infty_tL^2}^2 + C_\theta \right).
\end{multline*}
Thus, we conclude that
$\norm{\nabla u}_{L^\infty_tL^2} \leqslant C \Jbr{t}^{\frac1{1-\kappa}}$.
\end{proof}

\subsection{\texorpdfstring{Proof of Theorem \ref{thm:general}}{Proof of Theorem 3.4}}
We have shown that \eqref{eq:gmH} is globally well-posed in $\widetilde\Sigma^{1,1}_{V}$.
Let us next extend the global existence of solution to \eqref{eq:gH}
for a general data, that is, for a data which \emph{does not necessarily} satisfy
\eqref{eq:neutrality}.
\begin{proof}[Proof of Theorem \ref{thm:general}]
Take a nonzero $u_0 \in \Sigma^{1,1}_V$ and define a positive constant $M$
and $d$-dimensional vectors ${\bf a}$ and $ {\bf b}$ as in \eqref{def:M} and \eqref{def:ab}, respectively.
Set $v_0(x) =(\pi_{-\bf a}\tau_{-\bf b} u_0) (x)$.
One easily verifies that $v_0$ satisfies \eqref{eq:neutrality}.
Namely, $v_0 \in \widetilde\Sigma^{1,1}_{V}$.
Then, applying Theorem \ref{thm:LWPgmH} (2) and Lemma \ref{lem:H1bound},
we obtain a global solution
$\widetilde{u}$ of \eqref{eq:gH} which conserves the mass, the energy, and the momentum.
The solution depends continuously on $v_0$, and so on $u_0$
(Recall that $e^{itA}\phi$ is continuous with respect to the parameter $M$
in $A$).
We now define a function $u$ by $u= \exp({i \frac{|{\bf a}|^2}{2}t})
	\tau_{{\bf a}t+{\bf b}} \pi_{{\bf a}}  \widetilde{u}$
as in \eqref{def:ut}.
Then, $u$ belongs to the same class as $\widetilde{u}$ and solves \eqref{eq:gH} with $u(0)=u_0$.
The solution $u$ conserves the mass because
\[
	\Lebn{u(t)}2 = \Lebn{\widetilde{u}(t)}2 = \Lebn{v_0}2 =\Lebn{u_0}2.
\]
Similarly,
\begin{align*}
	\Lebn{\nabla u(t)}2^2
	&{}=\Lebn{\nabla \widetilde{u}(t) + i {\bf a}\widetilde{u}(t)}2^2 \\
	&{}=\Lebn{\nabla \widetilde{u}(t)}2^2  - 2{\bf a}\cdot 
	P[\widetilde{u}(t)]
	+ |{\bf a}|^2 \Lebn{\widetilde{u}(t)}2^2
\end{align*}
and
\begin{align*}
	\iint_{{\mathbb{R}}^{2d}}V{(x-y)} |u(t,x)|^2|u(t,y)|^2 dxdy
	= \iint_{{\mathbb{R}}^{2d}}V{(x-y)} |\widetilde{u}(t,x)|^2|\widetilde{u}(t,y)|^2 dxdy
\end{align*}
hold. These give us the conservation of energy
\begin{multline*}
	E[u(t)] = E[\widetilde{u}(t)] -
	{\bf a}\cdot P[\widetilde{u}(t)]
	+ \frac12|{\bf a}|^2 \Lebn{\widetilde{u}(t)}2^2 \\
	=E[v_0] - {\bf a}\cdot P[v_0] + \frac12|{\bf a}|^2 \Lebn{v_0}2^2
	= E[u_0]
\end{multline*}
and the conservation of momentum
\begin{align*}
	P[u(t)]
	&{}= \operatorname{Im} \int_{{\mathbb{R}}^d}\overline{\widetilde{u}(t,y)}(\nabla \widetilde{u}(t,y) +i{\bf a}\widetilde{u}(t,y))dy \\
	&{}= P[\widetilde{u}(t)] +{\bf a}\Lebn{\widetilde{u}(t)}2^2
 =P[v_0] +{\bf a}M = P[u_0].
\end{align*}
The estimate on $\Lebn{\nabla u(t)}2$ is given in Lemma \ref{lem:H1bound},
and then the estimate on $\Lebn{\Jbr{V(\cdot)}^{1/2} u(t)}2$
follows from Lemma \ref{lem:estVu}.

So far, we have shown all the statement 
except for the uniqueness.
Let $w \in L^\infty([-T,T],\Sigma^{1,1}_V)\cap L^{8\zeta/d}([-T,T], W^{1,1}_{V,\frac{4\zeta}{2\zeta-1}})$ be another
solution of \eqref{eq:gH} with $w(0)=u_0$ which conserves $P[w(t)]$.
Then, $\partial_t w \in L^\infty((-T,T),(\Sigma^{1,1}_V)^\prime)$ holds from \eqref{eq:gH}.
We define 
$\widetilde{v} \in L^\infty([-T,T],\Sigma^{1,1}_V)\cap L^{8\zeta/d}_{\mathrm{loc}}({\mathbb{R}}, W^{1,1}_{V,\frac{4\zeta}{2\zeta-1}})$
as in \eqref{def:ut}.
Then, $\widetilde{v}$ is also a solution of \eqref{eq:gH} and conserves the momentum.
Since $X[\widetilde{v}(0)]=P[\widetilde{v}(0)]=0$,
we can see that $X[\widetilde{v}(t)] \equiv 0$.
Now, $\partial_t \widetilde{v} \in L^\infty((-T,T),(\Sigma^{1,1}_V)^\prime)$.
Multiply \eqref{eq:gH} by $\overline{\widetilde{v}}$ and integrate its imaginary part
to yield
$\Lebn{\widetilde{v}(t)}2^2=\Lebn{\widetilde{v}(0)}2^2=\Lebn{u_0}2^2$.
Then, we deduce that $\widetilde{v}$ solves \eqref{eq:gmH}.
By the uniqueness of \eqref{eq:gmH}, we obtain $\widetilde{v}=v$.
Back to the transform \eqref{def:ut}, this implies $w=u$.
\end{proof}

\subsection{\texorpdfstring{Proof of Theorem \ref{thm:main1}}{Proof of Theorem 1.1}}\label{subsec:appl}
Now, we are in a position to complete the proof of our main theorem.
Set 
\begin{equation}\label{def:VR}
	V(x)=\lambda |x|^\gamma \chi(x), \quad R(x)=\lambda|x|^\gamma (1-\chi(x)),
\end{equation}
where $\chi$ is a smooth radial non-decreasing (with respect to $|x|$)
function such that $\chi\equiv1$ for $|x|\geqslant2$
and $\chi\equiv0$ for $|x|\leqslant1$.
One immediately sees that $R \in L^\infty$ and Assumptions $(\rm{R1})$ and $(\rm{R2})$ are fulfilled
with $\zeta=\theta=\infty$. 
Thus, Theorem \ref{thm:main1} follows from Theorem \ref{thm:general}
if we prove that $V$ satisfies Assumptions
$(\rm{V1})$,  $(\rm{V2})$, $(\rm{V3})$, and $(\rm{V4})$.
We shall demonstrate merely $(\rm{V3})$ since the others are trivial.
Remark that $(\rm{V2})$ holds with $\kappa = \frac{2(\gamma-1)}{\gamma}$ and
that $\kappa<1$ if and only if $\gamma<2$.
\begin{lemma}\label{lem:esttK}
Let $\gamma \in (1,2]$ and 
\[
	\widetilde{K}(x,y) = |x-y|^{\gamma} - |x|^{\gamma}+ \gamma |x|^{\gamma -2} x\cdot y.
\]
There exists a positive constant 
$C$ depending only on $\gamma $ such that
\[
	\sup_{x \in {\mathbb{R}}^d} |\widetilde{K}(x,y)|\leqslant C |y|^{\gamma }.
\]
\end{lemma}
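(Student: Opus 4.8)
The plan is to recognize $\widetilde{K}(x,y)$ as a second-order Taylor remainder of the radial function $f(z):=|z|^\gamma$ and then to exploit the scaling structure of the problem. Since $\nabla f(z)=\gamma|z|^{\gamma-2}z$, one has
\[
	\widetilde{K}(x,y) = f(x-y) - f(x) + \nabla f(x)\cdot y,
\]
so that $\widetilde{K}(x,y)$ is precisely the remainder in the first-order Taylor expansion of $f$ at $x$ evaluated at $x-y$. A direct check shows that $\widetilde{K}$ is jointly homogeneous of degree $\gamma$, i.e. $\widetilde{K}(\mu x,\mu y)=\mu^\gamma\widetilde{K}(x,y)$ for $\mu>0$, and invariant under simultaneous rotations of $x$ and $y$. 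Consequently, writing $e_1$ for a fixed unit vector and using $\widetilde{K}(x,y)=|y|^\gamma\,\widetilde{K}(x/|y|,\,y/|y|)$ together with rotation invariance, the asserted estimate is equivalent to the single finiteness statement
\[
	\sup_{x\in{\mathbb{R}}^d}\abs{\widetilde{K}(x,e_1)} =: C_\gamma <\infty ,
\]
after which $\sup_x\abs{\widetilde{K}(x,y)}=C_\gamma|y|^\gamma$ follows immediately (the case $y=0$ being trivial).

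First I would record that $x\mapsto\widetilde{K}(x,e_1)$ is continuous on all of ${\mathbb{R}}^d$; this is where $\gamma>1$ enters, since $x\mapsto f(x-e_1)$ and $x\mapsto f(x)$ are continuous everywhere and the gradient term $\gamma|x|^{\gamma-2}x\cdot e_1$ extends continuously across the origin because $\abs{\,|x|^{\gamma-2}x\,}=|x|^{\gamma-1}\to0$ as $x\to0$. Hence $\widetilde{K}(\cdot,e_1)$ is bounded on the compact set $\{|x|\leqslant2\}$. To control $\{|x|\geqslant2\}$ I would use the mean-value form of the remainder: for such $x$ the segment joining $x-e_1$ to $x$ stays at distance at least $|x|-1\geqslant1$ from the origin, where $f$ is smooth, so $\widetilde{K}(x,e_1)=\tfrac12\,e_1^{\top}\nabla^2 f(\xi)\,e_1$ for some $\xi$ on that segment. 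Since $\nabla^2 f(z)=\gamma|z|^{\gamma-2}(I+(\gamma-2)\widehat{z}\otimes\widehat{z})$ has operator norm $\gamma|z|^{\gamma-2}$ for $\gamma\in(1,2]$, and $|\xi|\geqslant1$, we obtain $\abs{\widetilde{K}(x,e_1)}\leqslant\tfrac{\gamma}{2}|\xi|^{\gamma-2}\leqslant\tfrac{\gamma}{2}$ uniformly (using $\gamma\leqslant2$). Combining the two regions gives $C_\gamma<\infty$.

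The main obstacle is precisely that the Hessian of $|z|^\gamma$ is \emph{unbounded near the origin} when $\gamma<2$, so the naive remainder bound $\abs{\widetilde{K}(x,y)}\leqslant\tfrac12\norm{\nabla^2 f}_{\infty}|y|^2$ is false globally: it breaks down on every segment passing near $0$. The resolution built into the plan is to decouple the two difficulties. Near the origin the homogeneity of $\widetilde{K}$ already forces the correct scaling, so only a \emph{qualitative} continuity statement is needed there (supplied by $\gamma>1$), whereas the quantitative bound is required only far from the singularity, where the segment avoids $0$ and the Hessian is controlled by $|\xi|^{\gamma-2}\leqslant1$ (supplied by $\gamma\leqslant2$); thus both endpoints of the range $\gamma\in(1,2]$ enter for genuinely different reasons. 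As an alternative that never invokes the mean-value form through the singularity, one may instead write $\widetilde{K}(x,y)=\gamma\int_0^1\bigl(|x|^{\gamma-2}x-|x-ty|^{\gamma-2}(x-ty)\bigr)\cdot y\,dt$ via the fundamental theorem of calculus (valid for all $x,y$ since $f\in C^1$ when $\gamma>1$) and bound the integrand by the $(\gamma-1)$-Hölder continuity of $z\mapsto|z|^{\gamma-2}z$; this again isolates $\gamma-1>0$ as the crucial ingredient and yields the same conclusion with an explicit constant.
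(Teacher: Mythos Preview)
Your argument is correct. The homogeneity observation $\widetilde{K}(\mu x,\mu y)=\mu^\gamma\widetilde{K}(x,y)$ combined with rotation invariance reduces everything to the single boundedness statement $\sup_x|\widetilde{K}(x,e_1)|<\infty$, and your two-region treatment (continuity plus compactness on $\{|x|\leqslant2\}$, Lagrange remainder with the explicit Hessian $\nabla^2 f(z)=\gamma|z|^{\gamma-2}(I+(\gamma-2)\widehat{z}\otimes\widehat{z})$ on $\{|x|\geqslant2\}$) is clean and valid; the Hessian operator norm computation and the segment avoidance $|\xi|\geqslant|x|-1\geqslant1$ are both right.

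The paper proceeds differently. It does not invoke the joint homogeneity but works directly with general $y$, splitting into $|x|\leqslant2|y|$ versus $|x|\geqslant2|y|$ (which is of course the same split after your rescaling). On the near set each term is bounded by hand, yielding explicit constants such as $3^\gamma+2^\gamma+\gamma 2^{\gamma-1}$. On the far set the paper writes $\widetilde{K}$ via the fundamental theorem of calculus applied twice (essentially your suggested alternative at the end), obtaining an integral representation $\widetilde{K}=\widetilde{K}_1+\widetilde{K}_2$ with $\widetilde{K}_1=\gamma|y|^2\int_0^1 a|x-ay|^{\gamma-2}\,da$ and a double-integral term $\widetilde{K}_2$, and then estimates these on dyadic annuli $m|y|\leqslant|x|\leqslant(m+1)|y|$. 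Your route is more conceptual and shorter: the scaling reduction isolates the essential compactness, and the Lagrange form avoids the iterated integrals and annular bookkeeping. The paper's route, on the other hand, produces fully explicit constants throughout and never appeals to compactness, which can be preferable when one later needs to track the dependence on $\gamma$ quantitatively.
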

\begin{proof}
The case $y=0$ is trivial. We hence fix $ {\mathbb{R}}^d \ni y \neq 0$. 
It immediately follows that
\[
	\sup_{x, |x|\leqslant 2|y|} |\widetilde{K}(x,y)| \leqslant 3^\gamma|y|^\gamma + 2^\gamma |y|^\gamma + \gamma 2^{\gamma-1}|y|^{\gamma}
	\leqslant C |y|^\gamma.
\]
We now consider the case $|x| \geqslant 2|y|$. 
An elementary computation shows that $K$ is written as
\begin{align*}
	\widetilde{K}(x,y) ={}& \int_0^1 \frac{\partial}{\partial a}|x-ay|^\gamma da + \gamma |x|^{\gamma -2} x\cdot y \\
	={}& -\int_0^1 \gamma |x-ay|^{\gamma-2} y\cdot(x-ay) da + \gamma |x|^{\gamma -2} x\cdot y \\
	={}& \gamma |y|^2 \int_0^1 a |x-ay|^{\gamma-2} da 
	-  \gamma x\cdot y\int_0^1 \int_0^1\frac{\partial}{\partial b}|x-bay|^{\gamma-2}  db da \\
	={}& \gamma |y|^2 \int_0^1 a |x-ay|^{\gamma-2} da \\
	&{} +  \gamma(\gamma-4)  x\cdot y\int_0^1 \int_0^1 |x-bay|^{\gamma-4}ay\cdot(x-bay)   db da\\
	=:{}& \widetilde{K}_1(x,y) + \widetilde{K}_2(x,y).
\end{align*}
For any integer $m\geqslant2$, it holds that
\[
	\sup_{x, m|y| \leqslant |x| \leqslant (m+1)|y|}|\widetilde{K}_1(x,y)|
	\leqslant \gamma |y|^2 \int_0^1 a ((m-1)|y|)^{\gamma-2}da=\frac{\gamma}{2(m-1)^{2-\gamma}}|y|^\gamma.
\]
Therefore,
\[
	\sup_{x,|x|\geqslant2|y|}|\widetilde{K}_1(x,y)| \leqslant \sup_{m\geqslant 2}\frac{\gamma}{2(m-1)^{2-\gamma}}|y|^\gamma
	=\frac{\gamma}{2}|y|^\gamma
\]
Similarly, we have
\begin{align*}
	&\sup_{x, m|y| \leqslant |x| \leqslant (m+1)|y|}|\widetilde{K}_2(x,y)|\\
	&{}\leqslant \gamma(4-\gamma)  (m+1)|y|^2\int_0^1 \int_0^1 ((m-1)|y|)^{\gamma-4}a((m+1)|y|^2)   db da \\
	&{}=\frac{\gamma(4-\gamma)(m+1)^2}{2(m-1)^{4-\gamma}}|y|^\gamma.
\end{align*}
Since $\sup_{m\geqslant 2}(m+1)^2(m-1)^{4-\gamma}=3^2$, we conclude that
\[
	\sup_{x,|x|\geqslant2|y|}|\widetilde{K}_2(x,y)|
	\leqslant \sup_{m\geqslant 2}\frac{\gamma(4-\gamma)(m+1)^2}{2(m-1)^{4-\gamma}}|y|^\gamma
	=\frac{9\gamma(4-\gamma)}{2}|y|^\gamma,
\]
which completes the proof.
\end{proof}

\begin{proposition}
Let $V$ be as in \eqref{def:VR}.
If $\gamma \in (1,2)$ then $V$ satisfies Assumption $(\rm{V3})$
with $W(x)=-\lambda\gamma x \Jbr{x}^{\gamma-2}$.
More precisely, if we put
\[
	K(x,y) =\lambda \chi(|x-y|)|x-y|^{\gamma} - \lambda\chi(|x|)|x|^{\gamma}+\lambda \gamma \Jbr{x}^{\gamma -2} x\cdot y,
\]
then there exists a positive constant 
$C$ depending only on $\gamma $ and $\lambda$ such that
\[
	\sup_{x \in {\mathbb{R}}^d} |K(x,y)|\leqslant C \Jbr{y}^{\gamma }, \quad
	\sup_{x \in {\mathbb{R}}^d} |\nabla_x K(x,y)|\leqslant C \Jbr{y}.
\]
\end{proposition}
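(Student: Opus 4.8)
The plan is to reduce both estimates to the cutoff-free kernel $\widetilde K$ of Lemma~\ref{lem:esttK}, treating the cutoff $\chi$ and the regularization $|x|\rightsquigarrow\Jbr{x}$ as lower-order corrections. Writing $\Phi(z):=\chi(|z|)|z|^\gamma$ and $\rho(z):=(1-\chi(|z|))|z|^\gamma$, so that $|z|^\gamma=\Phi(z)+\rho(z)$ with $\rho$ smooth and supported in $\{|z|\leqslant2\}$, and setting $\Theta(x):=\Jbr{x}^\gamma$, one has
\[
	\tfrac1\lambda K(x,y)=\widetilde K(x,y)-\rho(x-y)+\rho(x)+\gamma\bigl(\Jbr{x}^{\gamma-2}-|x|^{\gamma-2}\bigr)x\cdot y,
\]
since $\gamma\Jbr{x}^{\gamma-2}x=\nabla\Theta(x)$ and $\gamma|x|^{\gamma-2}x=\nabla|x|^\gamma$. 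First I would record the elementary facts that carry the corrections: $\rho$ is bounded with all derivatives bounded, $\nabla^2\Phi$ is bounded (this is Assumption~$(\mathrm{V1})$ for $V=\lambda\Phi$, using $\gamma<2$ so that $\nabla^2|z|^\gamma=O(|z|^{\gamma-2})$ at infinity), and $\nabla^2\Theta$ is bounded for the same reason.

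For the sup bound, Lemma~\ref{lem:esttK} already gives $\sup_x|\widetilde K(x,y)|\leqslant C|y|^\gamma\leqslant C\Jbr{y}^\gamma$. The two cutoff terms satisfy $|\rho(x-y)|+|\rho(x)|\leqslant2\cdot2^\gamma$ since $0\leqslant\rho\leqslant2^\gamma$, hence are $\leqslant C\Jbr{y}^\gamma$. The regularization term is controlled by the claim that
\[
	f(x):=\bigl|\Jbr{x}^{\gamma-2}-|x|^{\gamma-2}\bigr|\,|x|
\]
is bounded on ${\mathbb{R}}^d$: near the origin $f(x)\leqslant\Jbr{x}^{\gamma-2}|x|+|x|^{\gamma-1}\to0$, while for $|x|\geqslant1$ the mean value theorem applied to $t\mapsto t^{\gamma-2}$ on $[|x|,\Jbr{x}]$ together with $\Jbr{x}-|x|=(\Jbr{x}+|x|)^{-1}\leqslant1/(2|x|)$ gives $f(x)\leqslant C|x|^{\gamma-3}\leqslant C$. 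Hence $\gamma|\Jbr{x}^{\gamma-2}-|x|^{\gamma-2}|\,|x\cdot y|\leqslant\gamma\Lebn{f}\infty|y|\leqslant C\Jbr{y}^\gamma$ because $\gamma>1$, and the first bound follows.

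For the gradient bound I would \emph{not} pass through $\widetilde K$, since $\nabla_x\widetilde K$ has a $|x|^{\gamma-2}|y|$ singularity at $x=0$ (which is precisely why $W$ uses $\Jbr{x}$ rather than $|x|$). Instead, differentiating the decomposition directly and applying the fundamental theorem of calculus to $(\nabla\Phi)(x-y)-(\nabla\Phi)(x)$ yields the identity
\[
	\tfrac1\lambda\nabla_xK(x,y)=\nabla^2\Theta(x)\,y-\int_0^1\nabla^2\Phi(x-sy)\,y\,ds,
\]
in which both Hessians appear and are bounded. Consequently $|\nabla_xK(x,y)|\leqslant|\lambda|\,(\Lebn{\nabla^2\Theta}\infty+\Lebn{\nabla^2\Phi}\infty)\,|y|\leqslant C\Jbr{y}$, uniformly in $x$. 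The conceptual point is that the choice of $W$ makes the first-order term in $x$ cancel, so differentiating $K$ once in $x$ produces a difference of \emph{second} derivatives, which is bounded by Assumption~$(\mathrm{V1})$, whereas $K$ itself retains the genuine $|y|^\gamma$ growth handled by Lemma~\ref{lem:esttK}.

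The main obstacle is the sup bound, and within it the regularization correction: unlike in Lemma~\ref{lem:esttK}, replacing $|x|^{\gamma-2}$ by $\Jbr{x}^{\gamma-2}$ forces a uniform estimate of $f$ that must simultaneously absorb the singularity of $|x|^{\gamma-2}$ at the origin (where $\gamma>1$ is used) and its slow decay at infinity (where $\gamma<2$ and the gain $\Jbr{x}-|x|=O(|x|^{-1})$ are used). The gradient bound, by contrast, is essentially immediate once the Hessian-difference identity is in hand and the boundedness of $\nabla^2\Phi$ and $\nabla^2\Theta$ has been verified.
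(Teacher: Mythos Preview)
Your argument is correct. For the sup bound you follow essentially the paper's route: reduce to Lemma~\ref{lem:esttK} plus the cutoff correction and the regularization correction $\gamma(\Jbr{x}^{\gamma-2}-|x|^{\gamma-2})x\cdot y$; the paper controls the latter via the identity $|x|^\nu-\Jbr{x}^\nu=-\tfrac{\nu}{2}\int_0^1(a+|x|^2)^{\nu/2-1}\,da$ while you use the mean value theorem and the estimate $\Jbr{x}-|x|\leqslant(2|x|)^{-1}$, but these are interchangeable.

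For the gradient bound your approach is genuinely different from the paper's and considerably shorter. The paper expands $\nabla_x K$ term by term, bounds the $\nabla\chi$ contributions and the $\Jbr{x}$ contributions directly, and then handles the remaining difference $|x-y|^{\gamma-2}(x-y)\chi(|x-y|)-|x|^{\gamma-2}x\chi(|x|)$ by writing it as $\int_0^1\partial_a\bigl(|x-ay|^{\gamma-2}(x-ay)\chi(|x-ay|)\bigr)\,da$ and estimating each of the three resulting integrands using $\sup_z|z|^{\gamma-2}\chi(|z|)\leqslant1$. Your observation that $\tfrac1\lambda\nabla_xK(x,y)=\nabla^2\Theta(x)\,y-\int_0^1\nabla^2\Phi(x-sy)\,y\,ds$, with both Hessians globally bounded because $\gamma<2$, bypasses this case analysis entirely; the price is that one must verify $\nabla^2\Theta\in L^\infty$ in addition to $(\mathrm{V1})$ for $\Phi$, but that is immediate. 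Your formulation also makes transparent \emph{why} the bound on $\nabla_xK$ is linear in $|y|$ (rather than $|y|^{\gamma-1}$): the choice of $W$ has absorbed the full first-order Taylor term, so what remains is a second-order remainder.
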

\begin{proof}
For simplicity, let $\lambda=1$.
Let $\widetilde{K}$ be as in Lemma \ref{lem:esttK}.
We deduce that
\[
	\sup_{x \in {\mathbb{R}}^d}|\widetilde{K}(x,y)-K(x,y)|\leqslant
	2^{1+\gamma}
	+ \sup_{x \in {\mathbb{R}}^d}|\gamma (|x|^{\gamma -2} - \Jbr{x}^{\gamma -2}) x\cdot y|.
\]
Let us estimate the second term of the right hand side.
An elementary calculation shows
\[
	|x|^\nu -\Jbr{x}^\nu
	= -\int_0^1 \partial_a (a+|x|^2)^{\frac{\nu}2} da
	= -\frac{\nu}{2}\int_0^1 (a+|x|^2)^{\frac{\nu}2-1} da
\]
for any $\nu$, and so 
\[
	\sup_{|x|\geqslant 1} |\gamma (|x|^{\gamma -2} - \Jbr{x}^{\gamma -2}) x\cdot y|
	\leqslant  \frac{\gamma(2-\gamma)}{2} |y|
	\leqslant C\Jbr{y}.
\]
It is obvious that
\[
	\sup_{|x|\leqslant 1} |
	\gamma (|x|^{\gamma -2} - \Jbr{x}^{\gamma -2}) x\cdot y|
	\leqslant C\Jbr{y}.
\]
Then, the first inequality follows from Lemma \ref{lem:esttK}.

Let us proceed to the second inequality.
Notice that
\begin{multline*}
	\nabla_x K(x,y)
	= \gamma|x-y|^{\gamma-2}(x-y)\chi(|x-y|) +|x-y|^\gamma \nabla \chi(|x-y|)
	\\
	 -\gamma|x|^{\gamma-2}x\chi(|x|)- |x|^\gamma \nabla \chi(|x|)\\
	+\gamma\Jbr{x}^{\gamma-2}y +\gamma(\gamma-2)\Jbr{x}^{\gamma-4}(x\cdot y)x.
\end{multline*}
Since $\nabla \chi(|x|) =0$ for $|x|\leqslant1$ and $|x|\geqslant2$, it holds that 
\[
	\sup_{x \in {\mathbb{R}}^d} ||x-y|^\gamma \nabla \chi(|x-y|)|
	+ ||x|^\gamma \nabla \chi(|x|)| \leqslant 2^{\gamma+1}\Lebn{\nabla \chi}\infty .
\]
We also deduce  that 
\[
	\sup_{x \in {\mathbb{R}}^d}
	|\Jbr{x}^{\gamma-2}y|+ |\Jbr{x}^{\gamma-4}(x\cdot y)x|
	\leqslant C|y|
\]
for $\gamma\in (1,2) $.
Now, It holds that
\begin{align*}
	&|x-y|^{\gamma-2}(x-y)\chi(|x-y|) - |x|^{\gamma-2}x\chi(|x|) \\
	&{}= \int_0^1 \partial_a |x-ay|^{\gamma-2}(x-ay)\chi(|x-ay|) da\\
	&{}= (2-\gamma)\int_0^1 |x-ay|^{\gamma-4}y\cdot(x-ay)(x-ay)\chi(|x-ay|) da \\
	&\quad - \int_0^1 |x-ay|^{\gamma-2}y\chi(|x-ay|) da \\
	&\quad - \int_0^1 |x-ay|^{\gamma-3}y\cdot(x-ay) \chi^\prime(|x-ay|) da.
\end{align*}
Notice that $\sup_{x\in {\mathbb{R}}^d}\abs{|x|^{\gamma-2}\chi(|x|)}\leqslant 1$.
Hence, the first term of the right hand side of above equality is estimated as
\begin{align*}
	&\sup_{x\in {\mathbb{R}}^d}\abs{\int_0^1 |x-ay|^{\gamma-4}y\cdot(x-ay)(x-ay)\chi(|x-ay|) da}\\
	&\quad \leqslant |y| \sup_{0\leqslant a\leqslant1}\sup_{x\in {\mathbb{R}}^d}|x-ay|^{\gamma-2} \chi(|x-ay|)  \leqslant |y|.
\end{align*}
One can obtain similar estimates for other terms. Thus, we conclude that
\begin{equation*}
	\sup_{x \in {\mathbb{R}}^d}
	\abs{|x-y|^{\gamma-2}(x-y)\chi(|x-y|) - |x|^{\gamma-2}x\chi(|x|)} \leqslant C|y|.
\end{equation*}
\end{proof}

\appendix
\section{\texorpdfstring{Global well-posedness of \eqref{eq:nH} for $\gamma\in(0,1]$}
{Global well-poseness of (nH) for gamma in (0,1]}}
Here, we adapt the abstract theory established in Section 3
to the case $\gamma \in (0,1]$, 
which is characterized as the case where Assumption $(\rm{V3})$ is satisfied with $W=0$.
In such a special setting, results in Section 3 become better.
This is because we do not need the conservation of momentum any longer.
As a result, Assumption $(\rm{V4})$ can be removed and the uniqueness holds 
in the class in which the solution lies (without the conservation of momentum).
\begin{theorem}\label{thm:appendix1}
	Let $d\geqslant 1$, $\gamma \in (0,1]$, and $\lambda \in {\mathbb{R}}$.
	Then \eqref{eq:nH} is globally well-posed in $\Sigma^{1,\gamma/2}$.
	Moreover, the uniqueness holds unconditionally.
\end{theorem}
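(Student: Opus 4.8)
The plan is to specialize the abstract framework of Section~3, exploiting that for $\gamma\in(0,1]$ one may take the auxiliary vector field $W\equiv0$ in \eqref{def:K}. I keep the decomposition \eqref{def:VR}, $V(x)=\lambda|x|^\gamma\chi(x)$ and $R(x)=\lambda|x|^\gamma(1-\chi(x))$, so that $R\in L^\infty$ and $(\rm{R1})$, $(\rm{R2})$ hold with $\zeta=\theta=\infty$. Assumption $(\rm{V1})$ is immediate, and since $\gamma\leqslant1$ makes $|\nabla V(x)|$ bounded on $\{\chi\neq0\}$, Assumption $(\rm{V2})$ holds with $\kappa=0$. The operator $A=\tfrac12\Delta+MV(x)$ therefore enjoys the Strichartz estimates and commutator identities of Section~3, and $\Sigma^{1,1}_V$ coincides with $\Sigma^{1,\gamma/2}$ in equivalent norms because $\Jbr{V(x)}^{1/2}\simeq\Jbr{x}^{\gamma/2}$.

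The only genuinely new computation is the verification of $(\rm{V3})$ \emph{with $W\equiv0$}, that is, for $K(x,y)=V(x-y)-V(x)$. For the size bound I would invoke the elementary subadditivity estimate $\bigl|\,|x-y|^\gamma-|x|^\gamma\,\bigr|\leqslant|y|^\gamma$, valid for $0<\gamma\leqslant1$ by concavity of $t\mapsto t^\gamma$; the cut-off $\chi$ contributes only an error bounded uniformly on the region $\{|x|\leqslant2\}$, whence $\sup_x|K(x,y)|\leqslant C\Jbr{y}^\gamma\leqslant C\Jbr{V(y)}$. The gradient bound is even easier: $\nabla_x K(x,y)=\nabla V(x-y)-\nabla V(x)$ is uniformly bounded by $(\rm{V2})$, so $\sup_x|\nabla_x K(x,y)|\leqslant C\leqslant C\Jbr{V(y)}$. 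Thus $(\rm{V3})$ holds with $W\equiv0$.

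Next I exploit that $W\equiv0$ dissolves the center-of-mass bookkeeping. With $W\equiv0$ the correction kernel in \eqref{eq:gmH} reduces to $\int K(\cdot,y)|u|^2\,dy=(V*|u|^2)-V(x)\Lebn{u}2^2$, so that \emph{under mass conservation alone}, i.e.\ $\Lebn{u_0}2^2=M$, the modified equation \eqref{eq:gmH} is identically \eqref{eq:gH}; the translation \eqref{def:ut} and the neutrality condition \eqref{eq:neutrality} become unnecessary. Hence the local theory of Theorem~\ref{thm:LWPgmH}, whose existence part requires only $(\rm{V1})$--$(\rm{V3})$ and $(\rm{R1})$, already yields a local solution of \eqref{eq:nH}; this solution conserves mass, and since the energy drift $\tfrac{d}{dt}E[u]=\operatorname{Re}\int W\,u\,\overline{\partial_t u}\,dx\cdot X[u]$ is proportional to $W$, it conserves energy as well. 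For the global bound I rerun Lemma~\ref{lem:H1bound}: its sole use of neutrality is to discard the term $\bigl(\int W|u|^2\,dx\bigr)\cdot X[u]$ in the energy identity, which now vanishes identically; combined with Lemma~\ref{lem:estVu} (which needs only $(\rm{V2})$), the $\dot H^1$-bound follows \emph{without} $(\rm{V4})$ and without assuming $X[u]\equiv0$. Global existence is then immediate from the blow-up criterion.

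Finally, unconditional uniqueness comes essentially for free. Because $\zeta=\infty$, every nonlinear estimate in the contraction closes in $L^1_T L^2$ and its weighted variants, so the fixed point lives in $C(\Sigma^{1,1}_V)=C(\Sigma^{1,\gamma/2})$ and the auxiliary Strichartz pair degenerates to $(\infty,2)$; the uniqueness class of Theorem~\ref{thm:general} is then just $C(\mathbb{R};\Sigma^{1,\gamma/2})$. Moreover, since \eqref{eq:gmH} and \eqref{eq:gH} coincide for $W\equiv0$, there is no transformation to invert and hence no appeal to conservation of momentum: any solution of \eqref{eq:nH} in this class is directly a solution of the modified equation and inherits its uniqueness. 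I expect the main obstacle to be organizational rather than analytic, namely checking, line by line, that every use of $(\rm{V4})$, of \eqref{eq:neutrality} beyond the mass constraint, and of momentum conservation in Theorem~\ref{thm:LWPgmH}(2), Lemma~\ref{lem:H1bound}, and the uniqueness argument of Theorem~\ref{thm:general} is mediated solely by the now-vanishing field $W$.
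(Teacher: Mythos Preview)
Your proposal is correct and follows essentially the same route as the paper: the paper also proves Theorem~\ref{thm:appendix1} by specializing the abstract framework to $V(x)=\lambda|x|^\gamma\chi(|x|)$, $R(x)=\lambda|x|^\gamma(1-\chi(|x|))$, verifies $(\rm{V1})$, $(\rm{V2})$ with $\kappa=0$, and $(\rm{V3})$ with $W\equiv0$ via the subadditivity bound $\bigl||x-y|^\gamma-|x|^\gamma\bigr|\leqslant|y|^\gamma$, and then invokes an intermediate result (Theorem~\ref{thm:appendix2}) asserting global well-posedness of \eqref{eq:gH} under $(\rm{V1})$--$(\rm{V3})$ with $W=0$ and $(\rm{R1})$--$(\rm{R2})$, whose proof is exactly your observation that $W\equiv0$ collapses \eqref{eq:gmH} to \eqref{eq:gH} under mass conservation alone, rendering $(\rm{V4})$, \eqref{eq:neutrality}, and momentum conservation superfluous. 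Your treatment is slightly more explicit in two places---you check the gradient bound in $(\rm{V3})$ (which the paper leaves implicit) and you articulate why $\zeta=\infty$ degenerates the uniqueness class to $C(\mathbb{R};\Sigma^{1,\gamma/2})$---but these are expository refinements, not a different argument.
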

This theorem is an immediate consequence of the following.
\begin{theorem}\label{thm:appendix2}
	Let Assumptions $(\rm{V1})$, $(\rm{V2})$, $(\rm{R1})$, and $(\rm{R2})$ be satisfied.
	Also suppose that $(\rm{V3})$ holds with $W=0$.
	Then, \eqref{eq:gH} is globally well-posed in $\Sigma^{1,1}_V$.
\end{theorem}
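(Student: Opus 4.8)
The plan is to specialize the machinery of Section~3, observing that the hypothesis $W=0$ collapses almost all of the apparatus built for Theorem~\ref{thm:general}. With $W=0$ the kernel \eqref{def:K} reduces to $K(x,y)=V(x-y)-V(x)$, so that $\int_{{\mathbb{R}}^d}K(\cdot,y)|u(y)|^2dy=(V*|u|^2)-V(x)\Lebn{u}2^2$. Consequently, if one fixes $M=\Lebn{u_0}2^2$ as in \eqref{def:M}, the term $MV(x)u$ on the left of \eqref{eq:gmH} exactly cancels the $V(x)\Lebn{u}2^2u$ produced on the right, and \eqref{eq:gmH} becomes \eqref{eq:gH} itself. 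Crucially, this identification no longer requires the neutrality condition \eqref{eq:neutrality}: since there is no $W\cdot X[u]$ contribution left to cancel, a solution of \eqref{eq:gmH} with this choice of $M$ solves \eqref{eq:gH} for \emph{arbitrary} data in $\Sigma^{1,1}_V$. This is why Assumption $(\rm{V4})$ and the momentum can be dispensed with entirely.

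First I would invoke the contraction-mapping construction for \eqref{eq:gmH}, which uses only $(\rm{V1})$, $(\rm{V2})$, $(\rm{V3})$, and $(\rm{R1})$, to obtain a unique local solution $u\in C([-T,T];\Sigma^{1,1}_V)\cap L^{8\zeta/d}([-T,T];W^{1,1}_{V,4\zeta/(2\zeta-1)})$ conserving mass; by the cancellation above this $u$ solves \eqref{eq:gH}. Energy conservation is then easier than in Theorem~\ref{thm:LWPgmH}: regularizing the data in $\Sigma^{2,2}_V$, multiplying \eqref{eq:gH} by $\overline{\partial_t u_n}$ and integrating the real part, the contribution $\operatorname{Re}\int W u_n\overline{\partial_t u_n}\,dx\cdot X[u_n]$ is now absent, so $\frac{d}{dt}E[u_n(t)]=0$ directly, and passing to the limit gives conservation of $E$. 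For the global bound I would re-run Lemma~\ref{lem:H1bound}: in the decomposition of $\iint V(x-y)|u|^2|u|^2\,dxdy$ the term $\int W(x)|u(x)|^2dx\cdot X[u(t)]$ vanishes identically when $W=0$, so the estimate $\iint V(x-y)|u|^2|u|^2\,dxdy\leqslant C\Lebn{u_0}2^2\Lebn{\Jbr{V(\cdot)}^{1/2}u}2^2$ holds \emph{without} assuming $X[u(t)]=0$. Combining this with the energy identity and the $\Jbr{V}^{1/2}$-bound of Lemma~\ref{lem:estVu}, and solving the resulting differential inequality by Young's inequality, yields the same polynomial-in-$t$ bounds on $\Lebn{\nabla u(t)}2$ and $\Lebn{\Jbr{V(\cdot)}^{1/2}u(t)}2$ as before, now valid on all of $\Sigma^{1,1}_V$.

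The global bound together with the blow-up criterion then forces $T_{\max}=T_{\min}=\infty$, giving global existence. For uniqueness, any solution of \eqref{eq:gH} lying in the class in which the constructed solution lives, with $\Lebn{u_0}2^2=M$, is by the same cancellation a solution of \eqref{eq:gmH}; hence uniqueness follows at once from the uniqueness in the contraction argument for \eqref{eq:gmH}, now \emph{without} requiring conservation of momentum, since no passage to a moving frame is needed. I expect the only genuinely delicate point to be the rigorous justification of the energy identity through the $\Sigma^{2,2}_V$-regularization and the verification that Lemma~\ref{lem:H1bound} survives verbatim once the $X[u]$-dependent term is deleted — both of which are routine precisely because $W=0$ removes the terms that obstructed the argument in the general case.
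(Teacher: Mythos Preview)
Your proposal is correct and follows essentially the same route as the paper's proof: invoke Theorem~\ref{thm:LWPgmH}(1) for local existence and mass conservation, observe that with $W=0$ mass conservation alone turns \eqref{eq:gmH} into \eqref{eq:gH} (so neither \eqref{eq:neutrality} nor $(\rm{V4})$ is needed), derive energy conservation by the regularization argument with the $W$-term absent, and then repeat the estimate of Lemma~\ref{lem:H1bound} via Lemma~\ref{lem:estVu} to obtain the global $\dot H^1$ bound and hence global existence; uniqueness comes from that of \eqref{eq:gmH}. The only small point you leave implicit is that the competing solution in the uniqueness step must also conserve mass before it can be identified with a solution of \eqref{eq:gmH}; the paper notes this explicitly, and it is routine.
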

\begin{remark}\label{rmk:appendixunique}
In Theorem \ref{thm:appendix2}, the uniqueness holds in
$u \in C({\mathbb{R}},\Sigma^{1,1}_V) \cap L^{8\zeta/d}_{\mathrm{loc}}({\mathbb{R}},W^{1,1}_{V,\frac{4\zeta}{2\zeta-1}})$,
where $\zeta$ is the number defined in Assumption $(\rm{R1})$.
If we can choose $\zeta=\infty$, then the uniqueness holds unconditionally.
\end{remark}
\begin{proof}[Proof of Theorem \ref{thm:appendix2}]
We choose $W=0$ and let $u$ be the unique local solution of \eqref{eq:gmH}
given in Theorem \ref{thm:LWPgmH} (1).
By the conservation of mass, the right hand side of \eqref{eq:gmH}
becomes
\[
	-u \left( (V*|u|^2)- V(x) \Lebn{u_0}2^2\right) 
	= - (V*|u|^2)u +M V(x) u.
\]
Hence $u$ solves \eqref{eq:gH}.
Energy conservation follows from this fact as in the proof of Theorem
\ref{thm:LWPgmH} (2).

By Lemma \ref{lem:estVu} and energy conservation, we have
\[
	\norm{\nabla u}_{L^\infty_tL^2}^2 \leqslant C + C|t|^{\frac2{2-\kappa}}
	 \norm{\nabla u}_{L^\infty_tL^2}^{\frac2{2-\kappa}}
	 + C\norm{\nabla u}_{L^\infty_tL^2}^{\frac{d}{\theta}}
\]
as in the proof of Lemma \ref{lem:H1bound}.
This yields $\norm{\nabla u}_{L^\infty_tL^2} \leqslant C \Jbr{t}^{\frac1{1-\kappa}}$.
Hence, again by Lemma \ref{lem:estVu}, one sees that $\norm{u(t)}_{\Sigma^{1,1}_V}$
never blows up in finite time.

We shall prove the uniqueness.
Let $v \in C({\mathbb{R}};\Sigma^{1,1}_V)$ be another solution of \eqref{eq:gH}.
One verifies that $v$ conserves mass and so that $v$ solves \eqref{eq:gmH}.
Thus, we conclude from the uniqueness of \eqref{eq:gmH} (given in Theorem
\ref{thm:LWPgmH} (1)) that $u=v$ follows.
\end{proof}
\begin{proof}[Proof of Theorem \ref{thm:appendix1}]
Take a non-decreasing function $\chi$ so that 
$\chi(r)= 0$ for $r\leqslant 1$ and $\chi(r)=1$ for $r\geqslant2$.
We put $V(x)=\lambda|x|^{\gamma}\chi(|x|)$ and $R(x)=\lambda|x|^\gamma(1-\chi(|x|))$.
It is obvious that $R\in L^\infty$ satisfies $(\rm{R1})$ and $(\rm{R2})$ with $\zeta=\theta=\infty$ and
that $V$ satisfies Assumption $(\rm{V1})$.
We infer that $(\rm{V2})$ is fulfilled with $\kappa=0$.
Furthermore, $(\rm{V3})$ follows with $W=0$ from the estimates 
$ \norm{V(x-y)-|x-y|^\gamma}_{L^\infty_x}\leqslant2^\gamma$,
$\Lebn{V(x) - |x|^\gamma}\infty\leqslant 2^\gamma$, and
$ \norm{|x-y|^\gamma-|x|^\gamma}_{L^\infty_x}\leqslant |y|^\gamma$.
Hence, Theorem \ref{thm:appendix1} follows from Theorem \ref{thm:appendix2}.
Uniqueness holds unconditionally since we can chose $\zeta=\infty$ (see Remark \ref{rmk:appendixunique}).
\end{proof}

We can also obtain results on 
\begin{equation}\label{eq:logH}
	\left\{
	\begin{aligned}
	&i \partial_t u + \frac12 \Delta u = -\lambda (\log|x|*|u|^2) u, \\
	&u(0)=u_0,
	\end{aligned}
	\right.
\end{equation}
where $(t,x) \in {\mathbb{R}}^{1+d}$.
\begin{theorem}\label{thm:appendix3}
	Let $d\geqslant 1$ and $\lambda \in {\mathbb{R}}$.
	Then \eqref{eq:logH} is globally well-posed in 
$\{f \in H^1 ;\ \Jbr{\log\Jbr{x}}^{1/2}f \in L^2 \}$.
\end{theorem}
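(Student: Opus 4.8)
The plan is to recognize that \eqref{eq:logH} fits the $W=0$ branch of the abstract theory, namely Theorem \ref{thm:appendix2}, since the logarithm grows even more slowly than $|x|^\gamma$ for any $\gamma\in(0,1]$. Concretely, I would split the potential as
\begin{equation*}
	V(x) = \lambda \log\Jbr{x}, \qquad R(x) = \lambda \log\frac{|x|}{\Jbr{x}},
\end{equation*}
so that $V(x)+R(x)=\lambda\log|x|$ and \eqref{eq:gH} becomes exactly \eqref{eq:logH}. The point of this particular splitting is twofold: $V$ is smooth on all of ${\mathbb{R}}^d$ (the logarithmic singularity is pushed entirely into $R$), and $\Jbr{V(x)}$ is comparable to $\Jbr{\log\Jbr{x}}$ whenever $\lambda\neq0$, so that $\Sigma^{1,1}_V$ coincides, with equivalent norm, with the space $\{f\in H^1;\ \Jbr{\log\Jbr{x}}^{1/2}f\in L^2\}$ of the statement. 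The case $\lambda=0$ is the free equation and is trivial.

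Next I would verify the hypotheses of Theorem \ref{thm:appendix2}. Assumption $(\rm{V1})$ is immediate because $\partial^\alpha\log\Jbr{x}$ is bounded for every $|\alpha|\geqslant1$. For $(\rm{V2})$ one computes $\nabla V(x)=\lambda x\Jbr{x}^{-2}$, so that $|\nabla V(x)|\leqslant|\lambda|/2$ is bounded; thus $(\rm{V2})$ holds with $\kappa=0$, which is precisely the regime in which momentum conservation, and hence Assumption $(\rm{V4})$, is not required. For $(\rm{V3})$ with $W=0$ I set $K(x,y)=V(x-y)-V(x)=\lambda(\log\Jbr{x-y}-\log\Jbr{x})$ and invoke Peetre's inequality $\Jbr{x-y}\leqslant\sqrt2\,\Jbr{x}\Jbr{y}$, together with its symmetric counterpart, to obtain
\begin{equation*}
	\sup_x|K(x,y)| \leqslant |\lambda|\Big(\log\Jbr{y}+\tfrac12\log2\Big)\leqslant C\Jbr{V(y)},
\end{equation*}
while $\nabla_x K(x,y)=\lambda\big((x-y)\Jbr{x-y}^{-2}-x\Jbr{x}^{-2}\big)$ is uniformly bounded and hence also $\leqslant C\Jbr{V(y)}$ since $\Jbr{V(y)}\geqslant1$.

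It remains to check the hypotheses on $R$. Since $|x|\leqslant\Jbr{x}$ we have $R\leqslant0$, so $R^+\equiv0\in L^\infty$ and $(\rm{R2})$ holds trivially with $\theta=\infty$. For $(\rm{R1})$ I would observe that $R$ has only a local logarithmic singularity at the origin, which lies in $L^\zeta(|x|\leqslant1)$ for every finite $\zeta$ because $\int_0^1|\log r|^\zeta r^{d-1}\,dr<\infty$, whereas on $|x|\geqslant1$ one has $R(x)=\frac\lambda2\log(1-\Jbr{x}^{-2})=O(\Jbr{x}^{-2})$ and is therefore bounded; hence $R\in L^\zeta+L^\infty$ for any finite $\zeta$, and in particular for some $\zeta>d/4$ as required. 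With $(\rm{V1})$, $(\rm{V2})$, $(\rm{V3})$ with $W=0$, $(\rm{R1})$, and $(\rm{R2})$ all in hand, Theorem \ref{thm:appendix2} yields global well-posedness of \eqref{eq:logH} in $\Sigma^{1,1}_V=\{f\in H^1;\ \Jbr{\log\Jbr{x}}^{1/2}f\in L^2\}$.

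I expect the only genuinely new work to be the verification of $(\rm{V3})$ and the integrability bookkeeping for $R$ in $(\rm{R1})$; everything else is a direct appeal to the machinery of Section 3. The subtlest point is conceptual rather than computational: the hard part is ensuring that $(\rm{V4})$ is \emph{not} needed, since $\Jbr{x}\leqslant C\Jbr{V(x)}$ plainly fails for a logarithmic $V$. This is exactly why I route the argument through Theorem \ref{thm:appendix2} rather than Theorem \ref{thm:general}: the logarithmic growth is mild enough that the transformed nonlinearity $u\int K(\cdot,y)|u|^2\,dy$ is controlled directly, and the momentum need not be conserved. Correspondingly, uniqueness will hold only in the Strichartz class of Remark \ref{rmk:appendixunique}, since one cannot take $\zeta=\infty$ because $R\notin L^\infty$, but the constructed solution does lie in that class.
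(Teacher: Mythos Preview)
Your approach is essentially the same as the paper's: both deduce Theorem \ref{thm:appendix3} from Theorem \ref{thm:appendix2} by exhibiting a splitting $\lambda\log|x|=V+R$ with $V$ satisfying $(\rm{V1})$, $(\rm{V2})$, and $(\rm{V3})$ with $W=0$, and $R$ satisfying $(\rm{R1})$, $(\rm{R2})$. The only difference is cosmetic: the paper takes $V(x)=\lambda\chi(|x|)\log|x|$ and $R(x)=\lambda(1-\chi(|x|))\log|x|$ with the same cutoff $\chi$ used for the power case, whereas you regularize with the Japanese bracket and take $V(x)=\lambda\log\Jbr{x}$, $R(x)=\lambda\log(|x|/\Jbr{x})$. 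Your Peetre-inequality verification of $(\rm{V3})$ is clean and the identification $\Sigma^{1,1}_V=\{f\in H^1:\Jbr{\log\Jbr{x}}^{1/2}f\in L^2\}$ is correct for $\lambda\neq0$.

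There is one slip in your treatment of $(\rm{R2})$. You claim $R\leqslant0$ because $|x|\leqslant\Jbr{x}$, hence $R^+\equiv0$. But $\log(|x|/\Jbr{x})\leqslant0$ gives $R\leqslant0$ only when $\lambda\geqslant0$; for $\lambda<0$ one has $R\geqslant0$ and so $R^+=R$, which is not identically zero. This does not break the argument: the same integrability you established for $(\rm{R1})$---namely $R\in L^\theta(\{|x|\leqslant1\})+L^\infty(\{|x|\geqslant1\})$ for every finite $\theta$---already yields $(\rm{R2})$ with any $\theta>d/2$, regardless of the sign of $\lambda$. Just replace the sentence ``Since $|x|\leqslant\Jbr{x}$ we have $R\leqslant0$, so $R^+\equiv0\in L^\infty$'' by the observation that $R^+\leqslant|R|$ and invoke the $L^\theta+L^\infty$ bound already obtained. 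Your remark on uniqueness is accurate and applies equally to the paper's splitting, since its $R$ also carries the logarithmic singularity at the origin and thus cannot be taken in $L^\infty$.
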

This reproduce \cite[Theorem 1.1]{Ma2DSPe} when $d=2$.
The proof is similar to that of Theorem \ref{thm:appendix1}.
We choose $V(x)=\lambda\chi(|x|)\log|x|$ and $W(x)=\lambda(1- \chi(|x|))\log|x|$.

\subsection*{Acknowledgments}
This research is supported by Japan Society for the Promotion of Science(JSPS)
Grant-in-Aid for Young Scientists (B) 24740108.

\providecommand{\bysame}{\leavevmode\hbox to3em{\hrulefill}\thinspace}

\providecommand{\href}[2]{#2}

\end{document}